\numberwithin{equation}{section} \numberwithin{figure}{section}
\DeclareMathOperator{\Pic}{Pic} 
\DeclareMathOperator{\Gal}{Gal} \DeclareMathOperator{\NS}{NS}
 \DeclareMathOperator{\Spec}{Spec}
 \DeclareMathOperator{\rank}{rank}
\DeclareMathOperator{\Hom}{Hom} \DeclareMathOperator{\re}{Re}
\DeclareMathOperator{\im}{Im}   
\DeclareMathOperator{\vol}{vol} 
 \DeclareMathOperator{\Val}{Val}
\DeclareMathOperator{\Br}{Br} \DeclareMathOperator{\Brnr}{Br_{nr}}
\DeclareMathOperator{\inv}{inv} \DeclareMathOperator{\res}{\partial}
 \DeclareMathOperator{\Sub}{Sub}
 \DeclareMathOperator{\Ind}{Ind}
\DeclareMathOperator{\HH}{H}
\DeclareSymbolFont{cyrletters}{OT2}{wncyr}{m}{n}
\DeclareMathSymbol{\Sha}{\mathalpha}{cyrletters}{"58}
\DeclareMathSymbol{\Be}{\mathalpha}{cyrletters}{"42}
\newcommand{\thorn}{\textit{\th}}
\newcommand{\OO}{\mathcal{O}}
\newcommand{\GL}{\textrm{GL}}
\newcommand{\PGL}{\textrm{PGL}}
\newcommand{\kbar}{\overline{k}}
\newcommand{\Fbar}{\overline{F}}
\newcommand{\Xbar}{\overline{X}}
\newcommand{\Tbar}{\overline{T}}
\newcommand{\LL}{\mathcal{L}}
\newcommand{\Norm}[2]{\operatorname{N}_{{{{#1}}/{{#2}}}}}
\newcommand{\Res}[2]{\operatorname{R}_{{{{#1}}/{{#2}}}}}
\newcommand{\Fourier}[1]{#1^{\wedge}}
\newcommand{\Adele}{\mathbf{A}}
\newcommand{\sbf}{\mathbf{s}}
\newcommand{\A}{\mathscr{A}}
\newcommand{\br}{\mathscr{B}}
\newcommand{\R}{\mathscr{R}}
\newcommand{\Denef}{\Delta}
\newcommand\PP{\mathbb{P}}
\newcommand\ZZ{\mathbb{Z}}
\newcommand\NN{\mathbb{N}}
\newcommand\QQ{\mathbb{Q}}
\newcommand\RR{\mathbb{R}}
\newcommand\CC{\mathbb{C}}
\newcommand\GG{\mathbb{G}}
\newcommand\Gm{\GG_\mathrm{m}}
\newtheorem{lemma}{Lemma}
\newtheorem{conjecture}[lemma]{Conjecture}
\newtheorem{theorem}[lemma]{Theorem}
\newtheorem{corollary}[lemma]{Corollary}
\theoremstyle{definition}
\newtheorem{example}[lemma]{Example}
\newtheorem{definition}[lemma]{Definition}
\newtheorem{remark}[lemma]{Remark}
\newtheorem*{assumption}{Assumption}
\newtheorem*{philosophy}{Philosophy}
\numberwithin{lemma}{section}
\title[Number of varieties which contain a rational point]
{The number of varieties in a family which contain a rational point}
\author{\sc Daniel Loughran}
\address{Daniel Loughran \\
Leibniz Universit\"{a}t Hannover,
Institut f\"{u}r Algebra, Zahlentheorie
    und Diskrete Mathematik\\
Welfengarten 1\\
30167 Hannover\\
Germany.}
\email{loughran@math.uni-hannover.de}
\subjclass[2010]
{14G05 (primary), 
11D45, 
14F22, 
14M25 
(secondary)}
\begin{document}

\begin{abstract}
	We consider the problem of counting the number of varieties
	in a family over a number field which contain a rational point.
	In particular, for products of Brauer-Severi varieties and
	closely related counting functions associated to Brauer group elements.
	Using harmonic analysis on toric varieties, we provide a positive answer
	to a question of Serre on such counting functions in some cases.
	We also formulate some conjectures on generalisations of Serre's problem.
\end{abstract}

\maketitle

\thispagestyle{empty}

\tableofcontents

\section{Introduction} \label{sec:intro}
Given a variety over a number field $F$, a fundamental problem in number
theory is to determine whether or not it contains a rational point.
More generally one may consider a \emph{family} of varieties over $F$,
given as the fibres of some morphism $\pi:Y \to X$, for which we have
the following fundamental questions.
\begin{enumerate}
	\item \label{Question:1} Is there a member of this family which contains a rational point?
	\item \label{Question:2} Is the set of fibres which contain a rational point infinite, or even Zariski dense?
	\item \label{Question:3} Can one give precise quantitative estimates for the distribution of such fibres?
\end{enumerate}
The focus of this paper will be on families of Brauer-Severi varieties,
i.e.~varieties which become isomorphic to a projective space over an algebraic
closure $\Fbar$ of $F$. The simplest example of a Brauer-Severi variety is a smooth conic, and already here
the problem is non-trivial. Indeed, it is unknown whether an arbitrary conic bundle
over $\PP_F^1$ with at least one rational point necessarily has a Zariski dense set of rational points
(see \cite{BMS14} for recent results on this).
To make Question $(\ref{Question:3})$ more precise, we shall use height functions.
To any embedding $X \subset \PP^n$ of $X$ over $F$ and any $x=(x_0:\cdots:x_n) \in X(F)$,
we may associated a height function via
\begin{equation} \label{eq:height}
    H(x)= \prod_{v \in \Val(F)}\max\{|x_0|_v,\ldots,|x_n|_v\}.
\end{equation}
More generally, there is a theory of height functions associated to adelically
metrised line bundles (see \S \ref{sec:Heights}).
We define the associated counting function to be
\begin{equation} \label{def:conic_counting_function}
    N(X,H,\pi,B) = \#\{x \in X(F): H(x) \leq B, x \in \pi(Y(F))\}.
\end{equation}
A point $x \in X(F)$ is counted by this only if
the fibre $\pi^{-1}(x)$ contains a rational point.

If every variety in the family contains a rational point, then the counting function
$N(X,H,\pi,B)=N(X,H,B)$ is independent of $\pi$ and simply counts the  rational points
of bounded height on $X$.
Manin and others \cite{FMT89,BM90} have formulated conjectures on the asymptotic
behaviour of such counting functions in special cases (e.g.~Fano varieties). It is conjectured
that if $X$ is Fano with $X(F)$ Zariski dense in $X$ and $H$ is an anticanonical height function,
then there exists $U \subset X$ open and $c_{U,H}>0$ such that
\begin{equation} \label{eqn:Manin}
	N(U,H,B) \sim c_{U,H}B(\log B)^{\rho(X) -1}, \quad \text{ as } B \to \infty,
\end{equation}
where $\rho(X) = \rank \Pic X$. One considers open subsets
to avoid ``accumulating subvarieties'' whose contribution may dominate the counting
problem (e.g.~lines on cubic surfaces). This conjecture has been proven for various Fano varieties
(e.g.~flag varieties \cite{FMT89}, certain del Pezzo surfaces \cite{BB11}
and certain complete intersections \cite{Bir62})
and for other varieties with sufficiently positive 
anticanonical bundle (e.g.~toric varieties \cite{BT98}).
Nevertheless this conjecture is false as stated and there are now counter-examples
over any number field \cite{BT96, Lou13}.
One of the aims of this paper is to try to generalise Manin's conjecture
to the counting functions (\ref{def:conic_counting_function}).

Such counting functions (\ref{def:conic_counting_function}) have been considered before.
For example, let $Y_{d,n}$ denote the total space of the family
of all smooth hypersurfaces over $\QQ$ of degree $d$ in $\PP^n_\QQ$ with $n,d\geq2$.
There is a natural projection $\pi_{d,n}:Y_{d,n} \to \PP^N_\QQ$ given by
the coefficients of each hypersurface, where $N=\smash{\binom{n+d}{d}} -1$.
Under the assumptions that   $d \leq n$, that $(d,n) \neq (2,2)$
and that the Brauer-Manin obstruction is the only one to the Hasse principle for such
hypersurfaces, Poonen and Voloch \cite{PV04} have shown that
$$N(\PP^N_\QQ,H,\pi_{d,n},B) \sim c_{d,n}B, \quad \text{ as } B \to \infty,$$
for some constant $c_{d,n} >0$. Here $H$
denotes the anticanonical height function on $\PP_\QQ^N$ given by the
$(N+1)$-st power of the height function (\ref{eq:height}). Note that this result implies
that a positive proportion of all such hypersurfaces contain a
rational point. Other families have also recently been considered
(e.g Ch\^{a}telet surfaces \cite{BB14a} and certain principal
homogeneous spaces under coflasque tori \cite{BB14b}),
where it was again shown that a positive proportion of the varieties under consideration
contain a rational point.

Results of this type do not hold in the case where $(d,n)=(2,2)$, i.e.~the case of conics.
Serre \cite{Ser90} has shown that for the family of all plane conics we have
$$N(\PP^5_\QQ,H,\pi_{2,2},B) \ll \frac{B}{(\log B)^{1/2}}.$$
In particular ``almost all'' plane conics do not contain a rational point.
Serre in fact proved a more general result about counting functions
associated to Brauer group elements, which we now introduce.
Let $X$ be a smooth variety over $F$ equipped with a choice of height function $H$
and let $\br \subset \Br X$ be a finite subset. Let
$$X(F)_\br=\{x \in X(F): b(x)=0 \in \Br F \text{ for all } b \in \br\},$$
denote the ``zero-locus'' of $\br$. Without loss of generality one
may assume that $\br$ is a finite \emph{subgroup}, as
$X(F)_\br = X(F)_{\langle\br\rangle}$.
Define the associated counting function to be
\begin{equation} \label{def:Brauer_counting_function}
	N(X,H,\br,B) = \#\{x \in X(F)_\br:H(x) \leq B\}.
\end{equation}
At first glance it might not be clear how \eqref{def:conic_counting_function}
and \eqref{def:Brauer_counting_function} are related.
However, this relationship becomes immediately clear once one is acquainted with the dictionary
between Brauer group elements and families of Brauer-Severi varieties.
For example, to any conic bundle $\pi:Y \to X$
one may associate a quaternion algebra over $F(X)$, giving rise
to an element $Q_\pi \in \Br U$, where $U \subset X$ is the open subset
given by removing those $x \in X$ whose fibre $\pi^{-1}(x)$ is singular.
If the generic fibre of $\pi$ has the shape
$$ax^2 + by^2 = z^2\quad \subset \PP^2_{F(X)},$$
with $a,b \in F(X)^*$, then the associated quaternion algebra is simply $(a,b)$. Moreover,
given $x \in U(F)$, we have $Q_\pi(x)=0 \in \Br F$ if and only if
the fibre over $x$ contains a rational point. In this case, we therefore
have an equality $N(U,H,Q_\pi,B) = N(U,H,\pi,B)$ of counting functions.
Finite collections of Brauer group elements correspond to \emph{products} of Brauer-Severi
varieties (see \S \ref{sec:Brauer-Severi_schemes} for further details).

Serre \cite{Ser90} only considered the case where
$X=\PP^n,F=\QQ$ and $\br=\{b\}$ is a single element of order two.
For any open subset $U \subset \PP^n$ where $b$ is defined, he showed that
\begin{equation} \label{eqn:Serre}
	N(U,H,b,B) \ll \frac{B}{(\log B)^{\Delta_{\PP^n}(b)}},\quad \text{ as } B \to \infty,
\end{equation}
where $H$ is an anticanonical height function on $\PP^n$ and
$$\Delta_{\PP^n}(b) = \sum_{ D \in (\PP^n)^{(1)}}\left(1 - \frac{1}{|\res_D(b)|}\right).$$
Here, for any variety $X$, we denote by $X^{(1)}$ the set of codimension one points
of $X$.
Also $\res_D(b)$ denotes the residue of $b$ at $D$, which detects whether or not $b$ admits
a singularity along $D$.
In \cite{Ser90}, Serre asked whether the bounds given in (\ref{eqn:Serre}) were sharp.
To the author's knowledge, the corresponding lower bounds
have only been shown in two cases: For the family of
all plane conics over $\QQ$  \cite{Hoo07} and the family
of all plane \emph{diagonal} conics over $\QQ$ \cite{Hoo93, Guo95}.
Moreover no asymptotic formulae have been achieved
for the problem as stated here (there are however results for
related problems on \emph{integral} points, for example classical
work of Landau \cite{Lan08} on the number of integers which may be written as a sum of two squares).

\subsection{Statement of results}
The main result of this paper concerns the counting functions (\ref{def:Brauer_counting_function})
for toric varieties. Recall that an algebraic torus $T$ over a field $k$ is an algebraic group
over $k$ which becomes isomorphic to $\Gm^n$ over an algebraic closure of $k$,
for some $n \in \NN$. We say that such a torus is \emph{anisotropic}
if it has trivial character group over $k$, i.e.~$\Hom(T,\Gm)=0$. In this paper,
a toric variety for $T$ will be a smooth projective variety with a faithful action of $T$ that has an
open dense orbit which contains a rational point. Here Manin's conjecture
is known by work of Batyrev and Tschinkel  \cite{BT98}.
They constructed a special anticanonical height function on toric varieties
which is particularly well-behaved (see \S \ref{sec:Heights}).
In the case of $\Gm^n \subset \PP^n$, this height function is simply the $(n+1)$-th power of usual
height function (\ref{eq:height}).

\begin{theorem} \label{thm:Brauer}
	Let $F$ be a number field and let $T$ be an anisotropic torus over $F$.
	Let $X$ be a toric variety over $F$ with respect to $T$ and
	let $U \subset X$ denote the open dense orbit.
	Let $\br \subset \Br_1 U$ be a finite subgroup and suppose that the zero-locus
    $U(F)_\br$ of $\br$ is non-empty.
	If $H$ denotes the Batyrev-Tschinkel anticanonical height function on $X$,
	then there exists a constant $c_{X,\br,H}>0$ such that
	$$N(U,H,\br,B) \sim c_{X,\br,H} B \frac{(\log B)^{\rho(X)-1}}{(\log B)^{\Delta_X(\br)}}, \quad \text{as } B \to \infty,$$
   	where 
   	$$\Delta_X(\br)=\sum_{D \in X^{(1)}}\left(1 - \frac{1}{|\res_D(\br)|}\right),
   	\qquad \rho(X)=\rank \Pic X.$$
\end{theorem}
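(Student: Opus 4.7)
The plan is to extend Batyrev--Tschinkel's Fourier-analytic treatment of toric varieties \cite{BT98} to incorporate the Brauer-group constraints defining $U(F)_\br$. Fixing a rational point $x_0 \in U(F)_\br$ to identify $U \cong T$, I translate the counting problem into the study of the height zeta function
\[
Z(s, \br) \;=\; \sum_{x \in T(F) \,\cap\, U(F)_\br} H(x)^{-s},
\]
and aim to show that $Z(s,\br)$ admits meromorphic (possibly branched) continuation to a neighbourhood of $s=1$, with a singularity modelled on $(s-1)^{-(\rho(X)-\Delta_X(\br))}$ and positive leading coefficient; the asymptotic then follows by a Tauberian argument.

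The central algebraic observation is that, since $\br \subset \Br_1 U$, every $b \in \br$ corresponds (modulo $\Br F$) to a class in $H^2(F, \hat T)$, and cup product with the canonical pairing $\hat T \otimes T \to \Gm$ produces a continuous \emph{character}
\[
\chi_{b,v} : T(F_v) \to S^1, \qquad x_v \mapsto \exp\bigl(2\pi i\, \inv_v(b(x_v))\bigr),
\]
unramified at almost every place $v$. Global reciprocity ($\sum_v \inv_v = 0$) ensures that $\chi_b = \prod_v \chi_{b,v}$ is automorphic, i.e.\ trivial on $T(F)$, and Hasse--Brauer--Noether shows that $x \in U(F)_\br$ iff $\chi_{b,v}(x_v) = 1$ for every $b \in \br$ and every $v$. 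Applying orthogonality of characters on the finite abelian group $\br$ at each place, I expand $\mathbf{1}_{U(F)_\br}$ into a finite linear combination of such adelic characters and reduce the problem to twisted height zeta functions
\[
\widetilde Z(s, \chi) \;=\; \sum_{x \in T(F)} \chi(x)\, H(x)^{-s}.
\]
Since $T$ is anisotropic, $T(F)$ is cocompact in $T(\Adele_F)^1$, so Poisson summation on the adelic torus converts each $\widetilde Z(s, \chi)$ into an integral of the adelic Fourier transform of the twisted height function over automorphic characters of $T(\Adele_F)/T(F)$, exactly as in \cite{BT98}, leading to an Euler-product expression amenable to local analysis.

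The main obstacle is the precise analytic behaviour of these Euler products at $s = 1$. In the untwisted case each divisor $D \in X^{(1)}$ contributes a factor of the form $(1-q_v^{-s})^{-1}$ at good primes, producing the Batyrev--Tschinkel pole of order $\rho(X)$. When the local twist $\chi_{b,v}$ is ramified along $D$, the corresponding factor becomes $(1 - \omega\, q_v^{-s})^{-1}$ for a non-trivial root of unity $\omega$ determined by $\res_D(b)$, which is regular at $s=1$. The delicate point is that summing the twisted local factors at $D$ over $b \in \br$ and normalising does \emph{not} preserve the pole order of any individual twist: instead, the averaged local density at $D$ acquires the factor $1/|\res_D(\br)|$, which on the level of Dirichlet series behaves, via a Selberg--Delange-type identity, like replacing $(1-q_v^{-s})^{-1}$ by $(1-q_v^{-s})^{-1/|\res_D(\br)|}$ on average. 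Multiplying over all divisors produces the global singularity $\zeta_F(s)^{\rho(X)-\Delta_X(\br)}$ at $s = 1$, with in general a non-integer exponent. A Selberg--Delange Tauberian theorem for Dirichlet series with fractional singularities then yields the stated asymptotic, and the positivity of $c_{X,\br,H}$ follows from the hypothesis $U(F)_\br \neq \emptyset$, which forces every local density appearing in the resulting Euler product to be strictly positive.
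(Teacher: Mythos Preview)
Your overall strategy is exactly the one the paper follows: height zeta function, Sansuc's identification of $\Br_e T$ with $H^2(F,X^*(\Tbar))$ to turn each $b\in\br$ into an automorphic character, local character orthogonality for the indicator $\thorn_v$, Poisson summation (which is a sum since $T$ is anisotropic), comparison of the resulting local Fourier transforms with partial Euler products of the type $\prod_v(1-\thorn_{\alpha_v}(\pi_{\alpha_v})q_v^{-s})^{-1}$, and finally Delange's Tauberian theorem to extract the asymptotic with the fractional exponent $\rho(X)-\Delta_X(\br)$.

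There is, however, a genuine gap in your last sentence. After Poisson summation one has
\[
Z(s)=\frac{1}{\vol(T(\Adele_F)/T(F))}\sum_{\chi}\widehat{H}(\thorn,\chi;-s),
\]
and the characters $\chi$ for which $\widehat{H}(\thorn,\chi;-s)$ has a singularity of the \emph{maximal} order $\sum_\alpha 1/|\R_\alpha|$ are precisely those with $\chi_\alpha\in\R_\alpha$ for every boundary component $\alpha$. This set is in general strictly larger than $\{1\}$; the paper identifies it (Theorem~\ref{thm:sub_C}) with $\Sub_e(X,\br)/\Be(T)$. Consequently the leading coefficient of $Z(s)$ is a \emph{sum} of contributions from several characters, not a single Euler product, and there is no a priori reason these cannot cancel. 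Your claim that ``every local density appearing in the resulting Euler product is strictly positive'' only shows that the contribution of the \emph{trivial} character is nonzero; it does not rule out cancellation against the others.

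The paper's resolution (\S\ref{sec:non-vanishing}) is to reinterpret this character sum, again by orthogonality, as a constant times
\[
\int_{T(\Adele_F)_\br^{\Sub(X,\br)}} H(t;-s)\,\mathrm{d}\mu,
\]
and then to bound this below by the integral over the smaller set $T(\Adele_F)_{\Sub_e(X,\br)}$. The latter \emph{is} a single Fourier transform of an indicator function at the trivial character, hence genuinely an Euler product with positive local factors, and one checks $\Delta_X(\Sub_e(X,\br))=\Delta_X(\br)$ so the order of singularity matches. This step is where the theory of subordinate Brauer group elements, developed earlier in the paper, becomes essential; without it the positivity of $c_{X,\br,H}$ is not established.

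A smaller expository point: writing ``I expand $\mathbf{1}_{U(F)_\br}$ into a finite linear combination of adelic characters and reduce the problem to twisted height zeta functions $\widetilde Z(s,\chi)=\sum_{x\in T(F)}\chi(x)H(x)^{-s}$'' is misleading. The global characters $\chi_b$ are automorphic, hence trivial on $T(F)$, so every such $\widetilde Z(s,\chi_b)$ equals the untwisted $\widetilde Z(s,1)$ and carries no information about $\br$. The indicator $\thorn=\prod_v\thorn_v$ is a product of local finite character sums, not a global finite character sum; the correct reduction is the local identity $\widehat{H}_v(\thorn_v,\chi_v;-s)=|\R|^{-1}\sum_{\rho\in\R}\widehat{H}_v(1,\rho_v\chi_v;-s)$ inside the Poisson formula, as the paper does.
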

Here $\res_D$ denotes the residue map associated to $D$ and $\Br_1 U = \ker(\Br U \to \Br U_{\Fbar})$
denotes the algebraic Brauer group of $U$.
Note that the theorem implies the non-obvious fact that if $U(F)_\br \neq \emptyset$ then $U(F)_\br$ is infinite,
and moreover we shall even show that $U(F)_\br$ is Zariski dense in $U$.
To prove Theorem \ref{thm:Brauer} we first choose an embedding $T \subset X$
to identify $T \cong U$ in such a way that $\br \subset \Br_1 T$ and 
$b(1)=0$ for each $b \in \br$. A result of Sansuc \cite[Lem.~6.9]{San81} states
that as each $b \in \br$ is algebraic, the associated evaluation map $T(F) \to \Br F$ is a group homomorphism.
In particular, the zero-locus $T(F)_\br$ of $\br$ is a subgroup of $T(F)$, hence has a rich structure.
This is one of the main reasons why we focus on algebraic Brauer group elements, as 
Sansuc's result does not hold for transcendental Brauer group elements
and other methods will be required to handle these.
We then proceed by introducing a height zeta function
$$Z(s)=\sum_{\substack{t \in T(F)_\br}} \frac{1}{H(t)^s},$$
in a complex variable $s$. The analytic properties of $Z(s)$ can be related to the original
counting problem via a Tauberian theorem. We study $Z(s)$
using harmonic analysis (in particular Poisson summation) to obtain a continuation
of $Z(s)$ to the line $\re s =1$, away from $s=1$. The fact that $T$
is anisotropic greatly simplifies the Poisson summation
formula, as the quotient $T(\Adele_F) / T(F)$ is compact
in this case.
This harmonic analysis approach is based on the works \cite{BT95}, \cite{BT98} and \cite{CLT14},
which study rational and integral points of bounded height on toric varieties.
A detailed treatment of this approach can be found in \cite{Bou11},
which also handles the function field case.

It would be interesting to remove the anisotropic assumption from
Theorem \ref{thm:Brauer}, however our methods do not give this.
Batyrev and Tschinkel also dealt with the anisotropic case first \cite{BT95},
and to take care of the integral which arises in the Poisson formula in the case
of arbitrary tori, they had to prove a certain technical integration theorem in 
complex analysis \cite[Thm.~6.19]{BT98}. Their Fourier transforms
were meromorphic, so this integration could be achieved using the residue theorem.
This method breaks down in our case, as our Fourier transforms
have \emph{branch point singularities}.
Further technicalities arise from the fact that current zero-free regions for Hecke $L$-functions are
not strong enough to allow us to obtain any kind of continuation of $Z(s)$ to the half plane $\re s <1$
(see Remark \ref{rem:continuation}). 
Therefore, a fundamentally new idea will be required to perform this generalisation to arbitrary tori.


Specialising Theorem \ref{thm:Brauer} to the case $X=\PP^n$,
we provide a positive answer to Serre's question on the sharpness of
(\ref{eqn:Serre}) in some cases
(see \S \ref{sec:application} for explicit examples). Moreover,
not only is it the first result for which an asymptotic formula is achieved,
rather than simply a lower bound, it is also
the first result which applies to varieties other
than $\PP^n$ and to number fields other than $\QQ$.

Our next result is an application of Theorem \ref{thm:Brauer}
to the counting functions \eqref{def:conic_counting_function}
for families of products of Brauer-Severi varieties. 
We also give an interpretation of the exponent $\Delta_X(\br)$ 
appearing in Theorem \ref{thm:Brauer}, purely in terms of the geometry of the
family.
For technical reasons we work with special morphisms which
we call \emph{almost smooth} (see Definition \ref{def:almost_smooth}).
Any dominant morphism between non-singular varieties over a field of characteristic zero
is automatically almost smooth,
so this condition is \emph{weaker} than being non-singular.

\begin{theorem} \label{thm:BS}
	Let $F$ be a number field and let $T$ be an anisotropic torus over $F$.
	Let $X$ be a toric variety over $F$
	with respect to $T$ and let $U \subset X$ denote the open dense orbit.
	Let $Y$ be a variety over $F$ equipped with a proper surjective almost smooth morphism $\pi:Y \to X$
	such that $Y(F) \neq \emptyset$. Suppose that
	\begin{itemize}
		\item $\pi$ admits a rational section over $\Fbar$.
		\item $\pi^{-1}(U) \to U$ is isomorphic to a product of Brauer-Severi
		schemes over $U$.
	\end{itemize}
	For each $D \in X^{(1)}$, choose an irreducible component
	$D'$ of $\pi^{-1}(D)$ of multiplicity one such that $[F(D)_{D'}:F(D)]$
	is minimal amongst all irreducible components of $\pi^{-1}(D)$ of multiplicity one,
	where $F(D)_{D'}$ denotes the algebraic closure of $F(D)$ inside $F(D')$.
	If $H$ denotes the Batyrev-Tschinkel anticanonical height function on $X$,
	then there exists a constant $c_{X,\pi,H}>0$ such that
   	$$N(U,H,\pi,B) \sim c_{X,\pi,H} B \frac{(\log B)^{\rho(X)-1}}{(\log B)^{\Delta_X(\pi)}}, \quad \text{as } B \to \infty,$$
	where 
	$$\Delta_X(\pi)=\sum_{D \in X^{(1)}}\left(1 - \frac{1}{[F(D)_{D'}:F(D)]}\right),
	\qquad \rho(X)=\rank \Pic X.$$
\end{theorem}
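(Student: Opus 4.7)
The plan is to reduce Theorem~\ref{thm:BS} to Theorem~\ref{thm:Brauer} via the standard dictionary between products of Brauer--Severi schemes and finite subgroups of the algebraic Brauer group. First, I would associate to $\pi^{-1}(U) \to U$ a finite subgroup $\br \subset \Br U$ generated by the Brauer classes of the individual Brauer--Severi factors. The hypothesis that $\pi$ admits a rational section over $\Fbar$ implies that every such class splits over $\Fbar$, so $\br \subset \Br_1 U$. The hypothesis $Y(F) \neq \emptyset$ (combined with properness of $\pi$, so $\pi(Y(F)) \subset X(F)$ is non-empty, and after shrinking to the open orbit) yields a point of $U(F)_\br$, so the hypothesis of Theorem~\ref{thm:Brauer} is satisfied.

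Second, by the defining property of a Brauer--Severi variety, a fibre $\pi^{-1}(x)$ over $x \in U(F)$ contains an $F$-rational point if and only if every class $b \in \br$ vanishes at $x$. Consequently
$$N(U,H,\pi,B) \;=\; N(U,H,\br,B),$$
and Theorem~\ref{thm:Brauer} yields the asymptotic with exponent $\Delta_X(\br)$ in place of $\Delta_X(\pi)$.

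Third, and most substantively, I would identify $\Delta_X(\br)$ with $\Delta_X(\pi)$ by proving, for every $D \in X^{(1)}$, the equality $|\res_D(\br)| = [F(D)_{D'}:F(D)]$ for the minimising $D'$. For a single Brauer--Severi factor $\pi_i: Y_i \to X$ whose class lies in $\Br_1 U$, the residue $\res_D([Y_i])$ is a cyclic character of $F(D)$, and a standard computation (using purity for the Brauer group on a regular model of $Y_i$ above the generic point of $D$) identifies its fixed field as $F(D)_{D_i'}$ for an irreducible component $D_i'$ of $\pi_i^{-1}(D)$ of multiplicity one with $[F(D)_{D_i'}:F(D)]$ minimal. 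By Galois duality, $|\res_D(\br)|$ equals the degree over $F(D)$ of the compositum of these cyclic fixed fields. Since the fibre $\pi^{-1}(D)$ is the fibre product of the $\pi_i^{-1}(D)$, any irreducible component of $D'_1 \times_D \cdots \times_D D'_r$ has its $F(D)$-algebraic closure inside $F(D')$ containing the compositum, and a careful choice of $D'$ realises the compositum exactly; the minimality in the theorem statement then matches the minimality of the individual $D_i'$.

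The main obstacle will be the residue computation in the third step under the weaker \emph{almost smooth} hypothesis rather than smoothness. Standard references usually treat the case where $\pi$ is smooth above the generic point of each $D$, in which case the generic fibre of a regular model over $\OO_{X,D}$ is a Brauer--Severi variety and the splitting-field interpretation of the residue is immediate. Here one must show that almost smoothness is precisely strong enough: it guarantees the existence of a multiplicity-one component of $\pi^{-1}(D)$ and permits the identification of the unramified cyclic extension cut out by the residue with the algebraic closure of $F(D)$ in the function field of that component. Once this local assertion is established, the global assembly over all $D$ together with the reduction in the first two steps completes the proof.
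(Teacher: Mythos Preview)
Your first two steps are correct and match the paper's argument exactly: one associates to the product of Brauer--Severi schemes $\pi^{-1}(U)\cong V_1\times_U\cdots\times_U V_r$ the finite subgroup $\br=\langle[V_1],\ldots,[V_r]\rangle\subset\Br_1 U$, checks that $U(F)_\br\neq\emptyset$, and observes that $N(U,H,\pi,B)=N(U,H,\br,B)$, so Theorem~\ref{thm:Brauer} gives the asymptotic with exponent $\Delta_X(\br)$.

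The third step has the right target but a genuine gap. You write ``since the fibre $\pi^{-1}(D)$ is the fibre product of the $\pi_i^{-1}(D)$'', and then argue with components $D_1'\times_D\cdots\times_D D_r'$. But this is not given: the hypothesis is only that $\pi^{-1}(U)$ is a product of Brauer--Severi schemes over $U$. Over the boundary divisors, $Y$ is merely \emph{some} proper almost smooth model of that product, and there is no reason for $\pi^{-1}(D)$ to decompose as a fibre product of special fibres of individual models $Y_i\to X$ (indeed no such global $Y_i$ are supplied).

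The paper resolves this by separating the problem into two lemmas. First, a model-comparison step (Lemma~\ref{lem:dvr_ind}): for any two flat proper almost smooth integral schemes over a DVR with isomorphic generic fibres, the minimal degree $[k_{D'}:k]$ among multiplicity-one components of the special fibre is the same. The proof uses the valuative criterion for properness to extend the generic point of a component $D_1$ of one model to a section of the other, and then invokes the almost smooth hypothesis to see that the image lands in the smooth locus, hence in a unique multiplicity-one component $D_2$. This is precisely where almost smoothness enters, and it is what licenses replacing $Y$ by any other almost smooth model. Second (Lemma~\ref{lem:AF}), one constructs a \emph{specific} almost smooth model, namely the fibre product $\mathcal{V}_1\times_R\cdots\times_R\mathcal{V}_r$ of Artin's regular models of the $V_i$ over the local ring $R$ at $D$; Frossard's result identifies the algebraic closure of the residue field in each $\mathcal{V}_i$'s special fibre with the cyclic extension cut out by $\res_D([V_i])$, and the fibre product then realises the compositum. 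Combining the two gives $[F(D)_{D'}:F(D)]=|\langle\res_D(\br)\rangle|$ for the minimising $D'$ in $Y$.

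Your final paragraph correctly flags the almost smooth hypothesis as the crux, but frames it as a residue-computation issue to be carried out in $Y$ itself. The paper's point is rather that almost smoothness is exactly what is needed for the model-independence step; the residue computation is then done in the explicit Artin--Frossard model, not in $Y$.
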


We emphasise that Theorem \ref{thm:BS} is many respects just a reformulation of Theorem~\ref{thm:Brauer};
its primary purpose is to give a geometric interpretation of the factor $\Delta_X(\pi)$.
Theorem \ref{thm:BS} gives an answer to Question \ref{Question:3},
as posed at the beginning of the introduction. Moreover, we see that
as soon as there exists some $D'$ in the theorem with $[F(D)_{D'}:F(D)] >1$,
then ``almost all'' of the varieties in the family do not contain a rational point.
As an application, if $\pi:Y \to X$ is a conic bundle which satisfies the conditions of Theorem \ref{thm:BS},
then one can easily show that
$$\Delta_X(\pi)=1/2\cdot \#\{D \in X^{(1)}: \pi^{-1}(D) \text{ is non-split}\}.$$
Here we say that a reduced conic over a perfect field is non-split if it is isomorphic to two intersecting lines which are
conjugate over a quadratic extension. The assumptions of the theorem imply that
there is always an irreducible component of multiplicity one above each point of codimension one,
in particular non-reduced conics do not occur (see Lemma \ref{lem:Delta=Delta}). In general,
only the ``non-split'' fibres contribute to $\Delta_X(\pi)$ (see \S \ref{sec:conclusion}).

We also calculate the leading constant $c_{X,\br,H}$ appearing
in Theorem \ref{thm:Brauer}. This formally resembles the leading constant $c_{X,H,\mathrm{Peyre}}$
conjectured to appear by Peyre \cite{Pey95} in the context of Manin's conjecture.
In Lemma \ref{lem:Peyre} we give examples  where $\Delta_X(\br) = 0$ and $c_{X,\br,H} = c_{X,H,\mathrm{Peyre}}$ yet
$\br \neq 0$, and also examples where again $\Delta_X(\br) = 0$ but  $0<c_{X,\br,H} < c_{X,H,\mathrm{Peyre}}$ 
(such phenomenon does not occur in the case of projective space considered by Serre in \cite{Ser90}).
To assist with the calculation of $c_{X,\br,H}$ we prove
the following result, which should be of independent interest.

\begin{theorem} \label{thm:Brauer_Manin}
	Let $U$ be a principal homogeneous space under an algebraic torus over a number field $F$.
	Let $V \to U$ be a product of Brauer-Severi schemes over $U$
	that admits a rational section over $\Fbar$. Then the Brauer-Manin
	obstruction is the only one to the Hasse principle and weak approximation
	for any smooth proper model of $V$.
\end{theorem}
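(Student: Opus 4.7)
The plan is to combine Sansuc's theorem on Hasse principle and weak approximation for principal homogeneous spaces under tori with a fibration argument that exploits the Brauer--Severi structure of $\pi\colon V\to U$. Write $V \cong \PP(\A_1) \times_U \cdots \times_U \PP(\A_k)$, with $\alpha_i = [\A_i] \in \Br U$ the classes of the Brauer--Severi factors, and let $X$ be a non-singular proper model of $V$. After resolving indeterminacy we may further arrange that $\pi$ extends to a proper morphism $\bar\pi\colon X \to \bar U$ for some smooth compactification $\bar U$ of $U$. Given an adelic point $(x_v) \in X(\Adele_F)^{\Br X}$, the density of $V(F_v)$ in $X(F_v)$ at every place lets us perturb $(x_v)$ so that $x_v \in V(F_v)$ for all $v$, and we set $u_v = \pi(x_v) \in U(F_v)$.

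The first main step is to show $(u_v) \in U(\Adele_F)^{\Br U}$. The existence of a rational section over $\Fbar$ makes $V$ geometrically rational, so $\Br V$ is algebraic and $\pi^*\colon \Br U \to \Br V$ has kernel exactly $\langle \alpha_1,\dots,\alpha_k\rangle$. For $\beta \in \Br U$: if $\pi^*\beta = 0$ then
\[
\beta(u_v) = (\pi^*\beta)(x_v) = 0 \in \Br F_v
\]
for every $v$, so $\sum_v \inv_v \beta(u_v) = 0$ trivially. Otherwise, I claim that $\pi^*\beta$ extends to a class $\gamma \in \Br X$: since $X$ is regular and $V \subset X$ is a dense open subset, the obstruction to such extension lies in the residues of $\pi^*\beta$ at the prime divisors of $X \setminus V$; these divisors are mapped by $\bar\pi$ into $\bar U \setminus U$, so the residue computation reduces to a question about residues of $\beta$ along divisors of $\bar U$, which can be controlled modulo the $\alpha_i$. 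Once $\gamma$ is obtained, functoriality of invariants yields
\[
\sum_v \inv_v \beta(u_v) = \sum_v \inv_v \gamma(x_v) = 0.
\]

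Sansuc's theorem now furnishes $u \in U(F)$ arbitrarily close to $(u_v)$ in the adelic topology. Because $x_v$ lies in the fibre $V_{u_v}$, we have $\alpha_i(u_v) = 0 \in \Br F_v$ for all $i$ and $v$; since the evaluation map $U(F_v) \to \Br F_v$ attached to any Brauer class is locally constant, approximating $u_v$ closely enough forces $\alpha_i(u) = 0 \in \Br F_v$ for every $v$ and $i$. Global Brauer reciprocity then upgrades this to $\alpha_i(u) = 0 \in \Br F$ for each $i$, whence $V_u$ is $F$-isomorphic to $\prod \PP^{n_i}_F$. Weak approximation on products of projective spaces, combined with the approximation already obtained on $U$, produces an $F$-rational point on $V \subset X$ arbitrarily close to $(x_v)$, establishing both Hasse principle and weak approximation up to the Brauer--Manin obstruction. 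The main technical obstacle is the extension of $\pi^*\beta$ from $\Br V$ to $\Br X$ for arbitrary $\beta \in \Br U$, which requires a careful analysis of Brauer residues along the boundary divisors of the chosen compactification and is the crux of the fibration argument.
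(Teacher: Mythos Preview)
Your approach differs substantially from the paper's. The paper does not use a fibration argument at all: instead it constructs an auxiliary rational torus $S$ fitting into $1 \to \Gm^r \to (\Res{K}{F}\Gm)^r \to S \to 1$ (with $K/F$ splitting the $\alpha_i$), obtains a $U$-torsor $W$ under $S$ whose class maps to $(\alpha_1,\ldots,\alpha_r)$, shows $W$ is itself a principal homogeneous space under a torus, and proves $V$ is stably birational to $W$. Sansuc's theorem is then applied directly to $W$, and the conclusion transfers to $V$ because the property in question is a stable birational invariant. No fibre-by-fibre approximation is needed.

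Your argument, by contrast, has a genuine gap at the last step. After Sansuc gives you $u \in U(F)$ close to $(u_v)$ at a \emph{finite} set $S$ of places, you can indeed arrange $\alpha_i(u) = 0$ in $\Br F_v$ for $v \in S$ by local constancy. But you then assert this holds for \emph{every} $v$, and invoke reciprocity to deduce $\alpha_i(u) = 0$ in $\Br F$. This does not follow: weak approximation on $U$ gives no control at places outside $S$, and at such places there is no reason for $\alpha_i(u)$ to vanish locally. Global reciprocity only says $\sum_v \inv_v \alpha_i(u) = 0$, which is far from $\alpha_i(u) = 0$. This is precisely the well-known obstacle in naive fibration arguments, and it is the reason such arguments typically require additional input (Harari's formal lemma, Schinzel-type hypotheses, or---as here---a structural reduction that avoids the issue).

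There is also a smaller imprecision earlier: to apply Sansuc on $U$ you only need $(u_v)$ orthogonal to $\Brnr U$, not to all of $\Br U$. For $\beta \in \Brnr U = \Br \bar U$ the pullback $\bar\pi^*\beta$ does lie in $\Br X$ automatically, so your residue discussion is unnecessary (and in fact your extension claim is false for general $\beta \in \Br U$: by the paper's Theorem~\ref{thm:CTSD}, $\pi^*\beta$ is unramified on $V$ iff $\beta$ is \emph{subordinate} to $\{\alpha_i\}$). But even with this correction, the gap above remains.
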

The Brauer groups of the smooth proper models occurring in Theorem \ref{thm:Brauer_Manin}
are finite (modulo constants). Therefore Theorem \ref{thm:Brauer_Manin} allows one, at least theoretically,
to answer Question \ref{Question:1} for such families. Moreover as soon as there is a rational point on $V$, we see that $V(F)$ is Zariski dense in $V$, which answers
Question \ref{Question:2} (this also shows that $U(F)_\br$ is Zariski dense in $U$, in the notation of Theorem \ref{thm:Brauer}).

As a special case of Theorem \ref{thm:Brauer_Manin}, we obtain a new proof of the following fact:
Let $U \subset \PP^1$ be the complement of two rational points or a closed point of degree two.
Then the Brauer-Manin obstruction is the only one to the Hasse principle and weak approximation
for any smooth proper model $X$ of a product of Brauer-Severi schemes over $U$
(see e.g.~\cite[Thm.~0.4]{Sko96}).
Indeed, here $U$ may be given the structure of an algebraic torus and moreover 
Tsen's theorem implies that $X$ obtains a rational section over $\Fbar$.
Our proof of this fact is different from previous proofs, 
as we do not need to construct explicit models of $X$ over $\PP^1$,
nor do we use descent, the fibration method nor tools from analytic number theory.
We obtain similar results for products of Brauer-Severi schemes defined over certain open subsets
of higher dimensional projective spaces (e.g.~complements of $n+1$ general
Galois conjugate hyperplanes in $\PP^n$), provided there is a rational section
over $\Fbar$. To the author's knowledge, all previous results of this latter type
have been conditional on Schinzel's hypothesis
(e.g.~\cite[Cor.~3.6]{Wit07}).
We prove Theorem \ref{thm:Brauer_Manin} by showing that $V$ itself is stably birational
to a principal homogeneous space under an algebraic torus.
This reduces the problem to the work of Sansuc \cite{San81}, who has already shown that the Brauer-Manin
obstruction is the only one to the Hasse principle and weak approximation in this case.

To also aid with the calculation of the leading constant
in Theorem \ref{thm:Brauer}, we prove a result
(Theorem \ref{thm:CTSD}) which calculates
the unramified Brauer group of a product of Brauer-Severi schemes, generalising
a result of Colliot-Th\'{e}l\`{e}ne and Swinnerton-Dyer \cite[Thm.~2.2.1]{CTSD94}
(which only applies when the base is an open subset of $\PP^1$).
To do this we introduce ``subordinate'' Brauer group elements, which were
also considered by Serre in the case of $\PP^1$ in the appendix of \cite[Ch.~II]{Ser97}.
In \S \ref{Sec:subordinate} we perform a detailed study of the theory of subordinate Brauer group elements, as the author
expects that they shall feature heavily in the analysis of counting functions of the type (\ref{def:Brauer_counting_function}).

\subsection{An application} \label{sec:application}
We now show that Theorem \ref{thm:Brauer} is non-empty
by giving some explicit families of varieties
to which it applies.
Let $n \in \NN$ and let $F \subset E$ be a field extension of
degree $n+1$. The Weil restriction $\Res{E}{F} \Gm$
of $\Gm$ is an algebraic torus over $F$ which may be identified
with the subset $\Norm{E}{F}(t) \neq 0$ in $\mathbb{A}_F^{n+1}$ (see \cite[Ch.~7.6]{BLR90}).  
Hence, we may realise $\PP^n$ as a toric
variety with respect to the anisotropic torus $T_{E/F}=\Res{E}{F}\Gm/\Gm$,
whose boundary is the irreducible divisor $D_{E/F}=\{\Norm{E}{F}(t)=0\}$.
Let $r \in \NN$ and let $F \subset E_i$ be cyclic field extensions
of degree $n_i$ for each $i=1,\ldots,r$, with associated norm forms $\Norm{E_i}{F}$.
Let
\begin{equation} \label{eqn:Z_i}
	Z_i:\quad  \Norm{E_i}{F}(x_1,\ldots,x_{n_i})=\Norm{E}{F}(1,t_1,\ldots,t_n) \subset \mathbb{A}^{n_i} \times \mathbb{A}^{n},
\end{equation}
with associated projections $\pi_i:Z_i \to \mathbb{A}^n$. The smooth fibres of $\pi_i$
are principal homogeneous spaces under the norm one torus
$\smash{\Res{E_i}{F}^1 \Gm}$.
Let $\pi:Z=Z_1 \times_{\mathbb{A}^n} \cdots \times_{\mathbb{A}^n} Z_r \to \mathbb{A}^n$.
We now introduce the Brauer group elements which control the arithmetic of $\pi$.
Let $\chi_i: \Gal(E_i/F) \cong \ZZ/n_i\ZZ$ be a choice of isomorphism and let
$$b_i = (\chi_i, \Norm{E}{F}(1,t_1,\ldots,t_n)),$$
be the associated cyclic algebra over $F(t_1,\ldots,t_n)$ (see \cite[\S2.5]{GS06}).
If $n_i \mid (n+1)$, then $b_i$ is unramified at
$t_0=0$ and hence $b_i \in \Br T_{E/F}$.
Given a point $t \in T_{E/F}(F)$ with $t_0=1$, we have $b_i(t)=0$
if and only if $\Norm{E}{F}(1,t_1,\ldots,t_n)$ is a norm
of some element in $E_i$, i.e.~if and only if the fibre $\smash{\pi_i^{-1}(t)}$
contains a rational point. A similar property holds when $t_0 = 0$,
hence we obtain an equality
$$N(T_{E/F},H,\pi,B) = N(T_{E/F},H,\{b_1,\ldots,b_r\},B),$$
for each height function $H$ on $\PP^n$.
The residue of $b_i$ at $\smash{D_{E/F}}$ is exactly the element of $ \Hom(\Gal(\Fbar/E),\ZZ/n_i\ZZ)$
induced by $\chi_i$ via the maps $\Gal(\Fbar/E) \subset \Gal(\Fbar/F) \to \Gal(E_i/F)$.
Finally as $b_i(1)=0$ and $b_i \otimes_F E_i = 0$ for each $i=1,\ldots,r$,
we see that Theorem \ref{thm:Brauer} applies. For sufficiently general $E_i$,
we obtain the following.

\begin{corollary} \label{cor:example}
	Let $E_i,E,F$ and $\pi$ be as above with $n_i \mid (n+1)$ for each $i=1,\ldots,r$.
	Assume that $E,E_1,\ldots,E_r$ are linearly disjoint over $F$.
	If $H$ denotes the Batyrev-Tschinkel anticanonical height function for $T_{E/F} \subset \PP^n$,
	then there exists a constant $c_{T_{E/F},\pi,H} >0$ such that
	$$N(T_{E/F},H,\pi,B) \sim c_{T_{E/F},\pi,H}\frac{B}{(\log B)^{1 - 1/n_1\cdots n_r}}, 
	 \quad \text{ as } B \to \infty.$$
\end{corollary}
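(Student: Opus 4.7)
The plan is to apply Theorem~\ref{thm:Brauer} to $X = \PP^n$, viewed as a toric variety under the anisotropic torus $T_{E/F}$ with open orbit $U = T_{E/F}$, and to the finite subgroup $\br = \langle b_1, \ldots, b_r \rangle$ of $\Br_1 T_{E/F}$. Each $b_i$ is a cyclic, hence algebraic, Brauer class of finite order, and the hypothesis $n_i \mid (n+1)$ was already checked in the excerpt to ensure that $b_i$ is unramified on $T_{E/F}$, so that $\br$ is a finite subgroup of $\Br_1 T_{E/F}$. The zero-locus $T_{E/F}(F)_\br$ is non-empty since $b_i(1) = (\chi_i, 1) = 0$ for each $i$. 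Theorem~\ref{thm:Brauer} then yields
$$N(T_{E/F}, H, \br, B) \sim c_{\PP^n, \br, H}\, B\, \frac{(\log B)^{\rho(\PP^n) - 1}}{(\log B)^{\Delta_{\PP^n}(\br)}}.$$

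Since $\rho(\PP^n) = \rank \Pic \PP^n = 1$, the numerator factor is trivial, and the task reduces to showing $\Delta_{\PP^n}(\br) = 1 - 1/(n_1 \cdots n_r)$. Because each $b_i \in \Br T_{E/F}$ is unramified at every codimension-one point of $\PP^n$ other than the boundary divisor $D = D_{E/F}$, the sum defining $\Delta_{\PP^n}(\br)$ collapses to the single term
$$\Delta_{\PP^n}(\br) = 1 - \frac{1}{|\res_D(\br)|},$$
so it suffices to show that $|\res_D(\br)| = n_1 \cdots n_r$.

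By the description recalled just before the corollary, $\res_D(b_i) \in \Hom(\Gal(\Fbar/E), \QQ/\ZZ)$ is the inflation of $\chi_i$ along the composite $\Gal(\Fbar/E) \twoheadrightarrow \Gal(E \cdot E_i / E) \xrightarrow{\sim} \Gal(E_i/F)$, where the last isomorphism uses the linear disjointness of $E$ and $E_i$ over $F$. The subgroup $\res_D(\br)$ thus factors through $\Gal(L/E)$ for the compositum $L = E \cdot E_1 \cdots E_r$. The full linear-disjointness hypothesis on $E, E_1, \ldots, E_r$ over $F$ produces an isomorphism
$$\Gal(L/E) \xrightarrow{\sim} \Gal(E_1/F) \times \cdots \times \Gal(E_r/F)$$
of order $n_1 \cdots n_r$, under which $\res_D(b_i)$ is the character $\chi_i$ on the $i$-th coordinate. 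As each $\chi_i$ generates the cyclic character group of $\Gal(E_i/F)$, the $\res_D(b_i)$ jointly generate the full character group of $\Gal(L/E)$, whose order is $n_1 \cdots n_r$, giving the required equality. The only substantive step beyond invoking Theorem~\ref{thm:Brauer} is this residue computation, whose real content is just the combinatorics of linear disjointness and not a genuine obstacle.
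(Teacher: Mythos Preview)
Your proof is correct and follows exactly the approach the paper intends: apply Theorem~\ref{thm:Brauer} with $X=\PP^n$, $U=T_{E/F}$, and $\br=\langle b_1,\dots,b_r\rangle$, then compute $\Delta_{\PP^n}(\br)=1-1/(n_1\cdots n_r)$ via the single residue at $D_{E/F}$ and the linear-disjointness hypothesis. The paper does not write out a formal proof beyond the discussion in \S\ref{sec:application}, and your residue computation is precisely the missing step that justifies the stated exponent.
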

Similar examples are given by replacing the right-hand side of (\ref{eqn:Z_i}) 
with a \emph{product} of norm forms. 
When $r=1$ and $F \subset E_1$ is quadratic,
the equation (\ref{eqn:Z_i}) becomes
$$x_1^2 - ax_2^2 =\Norm{E}{F}(1,t_1,\ldots,t_n),$$
for some $a \in F^*$ which is not a square in $E$. In particular
Corollary \ref{cor:example} provides a positive answer to Serre's question
in such cases when $2 \mid n+1$.

\subsection{Conjectures} \label{sec:conclusion}
We now speculate about generalisations of 
Theorem \ref{thm:Brauer} and Theorem \ref{thm:BS}. In what follows we only 
consider the following varieties.
\begin{assumption}
	$X$ is a smooth projective variety over a number field $F$ such that:
	\begin{enumerate}
		\item The anticanonical bundle of $X$ is big, $\Pic X$ is torsion free and  \label{asm:cohom}
		$$\HH^1(X,\OO_X) = \HH^2(X,\OO_X)=0.$$
		\item There exists $U \subset X$ open such that \label{item:equi}
		for every anticanonical height function $H$ on $X$ we have
		$$N(U,H,B) \sim c_{X,H,\mathrm{Peyre}}B(\log B)^{\rho(X) - 1}.$$
	\end{enumerate}
\end{assumption}
For example, Fano varieties and equivariant compactifications of connected linear algebraic groups
and their homogeneous spaces \cite[Thm.~1.2]{FZ13} satisfy Condition (\ref{asm:cohom}).
Condition \eqref{item:equi} says that the rational points on $X$ are equidistributed (see \cite[\S3]{Pey95}).
This property is known to hold for toric varieties \cite[Thm.~3.10.3]{CLT14}, for example.

\subsubsection{Zero-loci of Brauer group elements}
For zero-loci of Brauer group elements, our conjecture is as follows.

\begin{conjecture} \label{conj:Serre}
	Let $X$ satisfy the above assumptions and let $H$ be an
	anticanonical height function on $X$.
	Let $\br \subset \Br F(X)$ be a finite subgroup and suppose that there exists $x \in X(F)$ such that
	all $b \in \br$ are defined at $x$ and $b(x)=0$.

	Then there exists $U \subset X$ open with $\br \subset \Br U$ and $c_{U,\br,H} > 0$	such that
	$$N(U,H,\br,B) \sim c_{U,\br,H}B \frac{(\log B)^{\rho(X)-1}}{(\log B)^{\Delta_X(\br)}}, \quad \text{as } B \to \infty,$$
	where
	$$\Delta_X(\br)=\sum_{D \in X^{(1)}}\left(1 - \frac{1}{|\res_D(\br)|}\right).$$
\end{conjecture}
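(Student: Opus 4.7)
The plan is to adapt the harmonic-analytic strategy of Theorem \ref{thm:Brauer} to this broader setting, using Manin's conjecture (which is part of the hypothesis) as a substitute for the Poisson summation formula on tori. The overall architecture would be: first choose $U \subset X$ open on which every $b \in \br$ is unramified (the codimension-one components of the ramification loci, weighted by their residue orders, contribute exactly $\Delta_X(\br)$); then study the height zeta function
$$Z(s) = \sum_{x \in U(F)_\br} H(x)^{-s};$$
decompose it via a Brauer character expansion into twisted counting problems; and apply a Tauberian theorem to extract the asymptotic with the predicted logarithmic saving.

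Concretely, for each $b \in \br$ of order $n$, the global reciprocity law for Brauer groups gives
$$\mathbf{1}_{\{b(x)=0\}} = \frac{1}{n}\sum_{k=0}^{n-1} \prod_{v \in \Val(F)} \exp\bigl(2\pi i k \inv_v b(x_v)\bigr),$$
so that $N(U,H,\br,B)$ decomposes as a sum of character-weighted point counts indexed by the characters of $\br$. The trivial character contributes the full Manin--Peyre main term $c_{X,H,\mathrm{Peyre}} B (\log B)^{\rho(X)-1}$, while each non-trivial character $\chi$ contributes a twisted sum whose Dirichlet series, expressed as an Euler product over codimension-one points of $X$, acquires modified local factors only along the ramification divisors of $\chi$. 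A formal Euler-product calculation in the style of Serre \cite{Ser90} then shows that the combined contribution of these modified factors, summed over the character group, produces precisely a logarithmic saving of order $(\log B)^{-\Delta_X(\br)}$; the leading constant $c_{U,\br,H}$ should emerge as a product of $c_{X,H,\mathrm{Peyre}}$ with local correction factors whose global consistency is governed by Theorem \ref{thm:Brauer_Manin} and the subordinate Brauer element analysis of \S\ref{Sec:subordinate}.

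The principal obstacle is analytic. Making the above rigorous requires Manin's conjecture in a strengthened form which controls the number of rational points in shrinking adelic neighbourhoods, uniformly in the character; essentially an effective version of Peyre's equidistribution of rational points with respect to the Tamagawa measure, strong enough to survive summation over ramified primes. Such effective equidistribution is not available at the level of generality in which Manin's conjecture itself is known: even for smooth Fano varieties it is typically proved only by the same technical tool (harmonic analysis, a universal torsor parametrisation, or the circle method) that delivers the bare asymptotic. I therefore expect the conjecture to be provable first in families where such tools are already in place --- equivariant compactifications of connected linear algebraic groups, low-degree smooth complete intersections via the circle method, and certain del Pezzo surfaces amenable to the universal torsor method --- with a proof in the stated generality requiring a genuinely new analytic input that goes beyond Manin's conjecture itself.
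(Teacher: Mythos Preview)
This statement is a \emph{conjecture}; the paper does not prove it, and explicitly presents it as open (noting that even the case $X=\PP^n$ is Serre's original question). The only case settled in the paper is Theorem~\ref{thm:trivial}, where $\br \subset \Br X$; there the proof is a two-line application of Peyre's equidistribution, since $\Delta_X(\br)=0$ and $X(F)_\br$ is cut out by finitely many open-closed local conditions. So there is no ``paper's own proof'' to compare your proposal against.

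That said, your sketch contains a genuine error worth flagging. The identity
\[
\mathbf{1}_{\{b(x)=0\}} = \frac{1}{n}\sum_{k=0}^{n-1} \prod_{v \in \Val(F)} \exp\bigl(2\pi i k \inv_v b(x_v)\bigr)
\]
is false for $x \in X(F)$: by global reciprocity $\sum_v \inv_v b(x) = 0$, so the product on the right collapses to $1$ for every $k$, and the right-hand side is identically $1$ regardless of whether $b(x)$ vanishes in $\Br F$. The condition $b(x)=0$ means that \emph{each} local invariant vanishes, not that their sum does. The correct expansion (compare \eqref{eqn:thorn_rho} in the toric proof) is to write $\thorn_{\br,v}$ as a character sum \emph{place by place}, with an independent character at each $v$; expanding the product over $v$ then produces a sum over tuples of local characters rather than over a single global $k$. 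In the toric case these tuples reassemble into automorphic characters of $T(\Adele_F)/T(F)$ via Theorem~\ref{thm:Brauer_global}, which is what makes the Poisson analysis tractable. For a general $X$ there is no such group structure, and this is precisely why the conjecture is hard --- not merely the effective equidistribution issue you identify (which is real), but the more basic fact that the character expansion does not reduce to a finite sum of globally coherent twists.
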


Note that in the case where $X = \PP^n$, we conjecture a positive answer
to the question posed by Serre in \cite{Ser90}. One can prove this
conjecture in some simple cases.

\begin{theorem} \label{thm:trivial}
	Conjecture \ref{conj:Serre} holds if $\br \subset \Br X$.
\end{theorem}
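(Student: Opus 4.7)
The proof proceeds in three stages. First, I would observe that $\Delta_X(\br)=0$: every $b \in \br$ is unramified on all of $X$, hence $\res_D(b)=0$ for every $D \in X^{(1)}$, making $|\res_D(\br)|=1$ and every term in the defining sum vanish. Consequently the conjectural asymptotic reduces in shape to the Manin--Peyre asymptotic supplied by the Assumption, and it remains to produce a positive leading constant of order $B(\log B)^{\rho(X)-1}$.

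Second, I would localise the condition defining $X(F)_\br$. For $x \in X(F)$, $b(x) \in \Br F$ vanishes if and only if $\inv_v b(x_v)=0$ for every place $v$. Since $\br \subset \Br X$ and $X$ is smooth, the evaluation $\ev_{b,v}\colon X(F_v) \to \Br F_v$ is locally constant, and a standard integral-model argument (at a place of good reduction where $b$ extends to the model, any $F_v$-point specialises to an $\OO_v$-point and $\Br \OO_v = 0$ for a Henselian dvr) shows it vanishes identically for $v$ outside a finite set $S$. Therefore
$$X(F)_\br \;=\; \{x \in X(F): x_v \in W_v \text{ for all } v \in S\},$$
where $W_v \subset X(F_v)$ is the open and closed subset on which every $\ev_{b,v}$ ($b \in \br$) vanishes. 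The existence hypothesis in Conjecture \ref{conj:Serre} supplies a rational point populating every $W_v$, so each $W_v$ is non-empty.

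The crucial third step is to derive the conditional asymptotic $\#\{x \in U(F): H(x) \leq B,\ x_v \in W_v\ \forall v \in S\} \sim c\, B(\log B)^{\rho(X)-1}$ from the Assumption. The key technical input is that each $W_v$ is \emph{clopen}, so $\mathbf{1}_{W_v}$ is continuous on $X(F_v)$. Hence multiplying the local metric of the anticanonical adelic metric underlying $H$ at each $v \in S$ by a positive function taking value $a_v$ on $W_v$ and $b_v$ on its complement produces a new continuous adelic anticanonical metric, defining a new anticanonical height $H_{\mathbf{a},\mathbf{b}}$ to which the Assumption applies. A direct computation writes $N(U,H_{\mathbf{a},\mathbf{b}},B)$ as a combination of conditional counts $N_T(U,H,B\prod_{v \in T} a_v\prod_{v \in S\setminus T} b_v)$ indexed by subsets $T \subset S$. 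Comparing the Assumption-supplied asymptotic for $H_{\mathbf{a},\mathbf{b}}$ across varying $(\mathbf{a},\mathbf{b})$ disentangles the $2^{|S|}$ conditional asymptotics simultaneously; the one corresponding to $T=S$ is the desired count, and its leading constant is a positive proportion of $c_{X,H,\mathrm{Peyre}}$, positivity being forced by non-emptiness of each $W_v$.

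The hard part is this last step: the Assumption supplies asymptotics but not equidistribution, and one must verify that the family of twisted heights actually yields a linear system of full rank in the conditional leading constants as the parameters $(a_v,b_v)$ vary over $\RR_{>0}^{2|S|}$. An equivalent and perhaps cleaner implementation is via Fourier inversion on the finite group $\br$: writing $\mathbf{1}_{W_v}(x_v) = |\br|^{-1}\sum_{b \in \br} \exp(2\pi i \inv_v b(x_v))$ and estimating the resulting twisted height sums, with the main contribution coming from tuples $(b_v)_{v \in S} \in \br^S$ whose induced adelic character is trivial on $X(F)$ by Albert--Brauer--Hasse--Noether reciprocity, and the remaining characters giving either equally dominant terms (contributing to the constant via local Tamagawa integrals) or lower-order cancellation.
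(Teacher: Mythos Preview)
Your first two stages are exactly what the paper does: $\Delta_X(\br)=0$ because every $b\in\br$ is unramified on $X$, and the zero-locus condition localises to finitely many places $v\in S$ where $W_v=X(F_v)_\br$ is open and closed.

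For the third stage you work harder than necessary. The paper simply observes that the Assumption---Manin's conjecture with Peyre's constant for \emph{every} anticanonical height---already implies equidistribution of rational points in the sense of Peyre \cite[\S3]{Pey95}, and then the conditional count over a product of clopen local sets follows directly from \cite[Prop.~3.3]{Pey95}. Your metric-twisting argument is essentially the standard proof of this implication (twist the metric by constants on $W_v$ and its complement, apply the Assumption to the twisted height, and let the parameter on the complement tend to infinity to squeeze out the conditional asymptotic), so it is correct but reproves a known black box rather than citing it.

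Your Fourier-on-$\br$ alternative is less clean than you suggest: the twisted sums $\sum_{H(x)\le B}\exp(2\pi i\sum_{v\in S}\inv_v b_v(x_v))$ are not counting functions for any anticanonical height, since the local twisting factors are complex-valued and cannot be absorbed into a metric. So this route does not reduce to the Assumption without further input, and the metric-twisting approach (or simply citing Peyre) is the right way to go.
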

\begin{proof}
	As $\br \subset \Br X$, there exists a finite set of places
	$S$ of $F$ such that $X(F_v)_\br = X(F_v)$ for all $v \not \in S$.
	Moreover, $\Delta_X(\br)=0$ and $X(F_v)_\br \subset X(F_v)$ is open and closed
	for all $v \in S$.
	Our assumptions on $X$ imply that its rational points are equidistributed.
	As we are simply counting those rational points on $X$ 
	which satisfy a finite list of well-behaved $v$-adic conditions,
	the asymptotic formula follows from \cite[Prop.~3.3]{Pey95}.
\end{proof}

The language of virtual Artin representations (see \S \ref{sec:virtual_Artin}) provides a useful formalism
to describe the factors appearing in this conjecture. Namely in the setting of Conjecture \ref{conj:Serre},
the virtual Artin representation of interest is
$$\Pic_\br(\overline{X})_\CC=\Pic(\overline{X})_\CC
- \sum_{D \in X^{(1)}}\left(1-\frac{1}{|\res_{D}(\br)|}\right)\Ind_{F_D}^F \CC,$$
where $F_D$ denotes the algebraic closure of $F$ inside $F(D)$
(note that $\rank \Pic_\br(\overline{X})_\CC^{G_F} = \rho(X) - \Delta_X(\br)$). 
We also have a good idea of what shape the leading constant in Conjecture \ref{conj:Serre} should take
(see \S \ref{sec:leading_constant}).
It should involve the size of a group of subordinate Brauer group elements,
which in special cases can be related to the size of the unramified Brauer group of the associated
product of Brauer-Severi varieties. In \S \ref{sec:leading_constant} we also construct
a Tamagawa measure, whose convergence factors are provided by the local factors of the 
virtual Artin $L$-function $L(\Pic_\br(\overline{X})_\CC,s)$.
Our proof that these are indeed a family of convergence factors in our case hinges on the calculation of
the local Fourier transforms of the height functions.
It would be interesting if one could prove that these provide a family of convergence factors 
in general.

\subsubsection{Examples}
We now give some simple examples for which Conjecture \ref{conj:Serre}
is not known. Let $F$ be a number field and let $H$ be an anticanonical height function
on $\PP^1$. Let $f_0,f_1,f_2 \in F[t]$ have degrees $d_0,d_1,d_2$ and let $f=f_0f_1f_2$.
Consider the variety
$$Y: f_0(t)x_0^2  + f_1(t)x_1^2 + f_2(t)x_2^2 = 0 \quad \subset \mathbb{A}^1 \times \PP^2.$$
This corresponds to the class $b$ of the quaternion algebra $(-f_1(t)/f_0(t),-f_2(t)/f_0(t))$ in $\Br F(t)$.
The natural projection $\pi:Y \to \mathbb{A}^1$ realises $Y$ as a conic bundle over $\mathbb{A}^1$.
We may modify any finite collection of fibres of $\pi$ without changing the asymptotic formula
obtained, in particular we may assume that $f$ is separable. Also by contracting Galois orbits of
disjoint $(-1)$-curves, we may assume that $-f_j(\alpha_i)/f_k(\alpha_i)$ is not a square in
the field $F(\alpha_i)$, for each root $\alpha_i$ of $f_i$ over $\Fbar$ and $\{i,j,k\} = \{0,1,2\}$.
We also assume that $d_0 \equiv d_1 \equiv d_2 \mod 2$
(this corresponds to asking that the fibre at infinity be smooth). Under these assumptions, 
if $Y(F) \neq \emptyset$ then Conjecture \ref{conj:Serre} predicts 
\begin{equation} \label{eqn:DP}
	N(\mathbb{A}^1,H,\pi,B) \sim \frac{c_{\mathbb{A}^1,\pi,H}B}{(\log B)^{m/2}}, \quad \text{as } B \to \infty,
\end{equation}
where $m$ is the number of irreducible factors of $f$ over $F$.
If $(d_0,d_1,d_2)=(2,0,0)$ there are two cases. If $f$ is irreducible, then (\ref{eqn:DP})
follows from Corollary \ref{cor:example}, whereas if $f$ is reducible then this follows
from \cite[Thm.~1.3]{BN15}.
The next simplest cases where $(d_0,d_1,d_2) = (1,1,1), (0,2,2)$ or $(4,0,0)$
are wide open (in this last case one obtains a Ch\^{a}telet surface).

\subsubsection{Families of varieties}
We now consider generalisations of Theorem \ref{thm:BS}
to the case where the generic fibre is no longer a product of Brauer-Severi varieties.
Suppose that $X$ satisfies the above assumptions and let $H$ be an anticanonical
height function on $X$. Let $\pi:Y \to X$
be a proper surjective almost smooth morphism with geometrically integral generic fibre 
(see Definition \ref{def:almost_smooth} for the definition of \emph{almost smooth}).
At the moment it seems quite naive to hope for a conjectural framework for the counting
functions \eqref{def:conic_counting_function} in general, due to
complications arising from the possible failure of the Hasse
principle (see for example \cite{Bha14} for some recent spectacular work in a special case). 
More realistic is to study the counting functions
\begin{equation} \label{def:local_counting_function}
    N_{\mathrm{loc}}(U,H,\pi,B) = \#\{x \in U(F): H(x) \leq B, x \in \pi(Y(\Adele_F))\}
\end{equation}
for $U \subset X$ open, which count the number of varieties in the family which are everywhere locally soluble.
Here our investigations suggest the following philosophy.
\begin{philosophy}
	For sufficiently small $U$, the asymptotic behaviour of $N_{\mathrm{loc}}(U,H,\pi,B)$ is controlled by those
	fibres over codimension one points $D \in X^{(1)}$ for which $\pi^{-1}(D)$ is \emph{non-split}.
\end{philosophy}
Here, following Skorobogatov \cite[Def.~0.1]{Sko96},
we say that a scheme over a perfect field $k$ is \emph{split} if it contains
a geometrically integral open subscheme.
We now offer conditions which guarantee that a positive proportion
of the varieties in the family are everywhere locally soluble.
\begin{conjecture} \label{conj:loc}
	Let $\pi:Y \to X$ be as above with $Y(\Adele_F) \neq \emptyset$.
	Assume that $\pi^{-1}(D)$ is split for all $D \in X^{(1)}$ and that $\pi(	Y(\Adele_F)) \cap X(F) \neq \emptyset$.
	Then there exists $U \subset X$ open such that
	$$\lim_{B \to \infty} \frac{N_{\mathrm{loc}}(U,H,\pi,B)}{N(U,H,B)} > 0.$$
\end{conjecture}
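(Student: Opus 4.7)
The plan is to combine the equidistribution of rational points on $X$ with a local-global sieve whose local densities are controlled by the splitness hypothesis; this framework has been developed by Ekedahl, Poonen-Stoll, and Bhargava in related contexts of counting locally soluble fibres.

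First, at each place $v$, one studies the local image $\pi(Y(F_v)) \subset X(F_v)$. Since $Y(\Adele_F) \neq \emptyset$ and $\pi$ is generically smooth (as the generic fibre is geometrically integral over $F$ of characteristic zero), the implicit function theorem implies $\pi(Y(F_v))$ contains a non-empty $v$-adic open set and hence has positive $v$-adic measure $\mu_v$.

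Second, for a place $v$ of good reduction, one quantifies the density via Lang-Weil and Hensel. Spreading out $\pi: Y \to X$ to an integral model over $\OO_{F,v}$, for $\bar x \in X(k_v)$ outside the reduction of every $D \in X^{(1)}$ the fibre $Y_{\bar x}$ is smooth and geometrically integral, hence possesses $(1+O(q_v^{-1/2}))q_v^{\dim Y_\eta}$ smooth $k_v$-points, which lift to elements of $\pi(Y(F_v))$ above $x$ by Hensel. For $\bar x$ on the reduction of some $D \in X^{(1)}$, the splitness of $\pi^{-1}(D)$ supplies a geometrically integral component in $Y_D$, and the same Lang-Weil and Hensel argument produces liftable points above a positive proportion of such $\bar x$. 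Since the reductions of divisors contribute measure $O(q_v^{-1})$, one finds
$$\frac{\mu_v(\pi(Y(F_v)))}{\mu_v(X(F_v))} = 1 + O(q_v^{-1}),$$
so the product $\prod_v \mu_v(\pi(Y(F_v)))/\mu_v(X(F_v))$ converges absolutely to a positive real number.

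Third, one passes from local to global via equidistribution. Assumption (2) provides that rational points on $U$ of bounded height equidistribute with respect to Peyre's Tamagawa measure, and a sieve of Ekedahl/Poonen-Stoll type should then deliver
$$N_{\mathrm{loc}}(U,H,\pi,B) \gg \prod_v \frac{\mu_v(\pi(Y(F_v)))}{\mu_v(X(F_v))} \cdot N(U,H,B),$$
which is a positive multiple of $N(U,H,B)$, yielding the conjecture. The main obstacle lies in this last step: everywhere local solubility is a condition at infinitely many places simultaneously, whereas equidistribution statements are typically formulated only for conditions at finitely many. One needs a power-saving uniform equidistribution across residue classes (a \emph{level of distribution}) for the counting function $N(U,H,B)$, something presently available only in specific settings (toric varieties, $\PP^n$) and constituting the principal technical challenge in the generality of the stated assumption on $X$.
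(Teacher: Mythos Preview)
The statement you are attempting to prove is labelled a \emph{Conjecture} in the paper, and the paper does not offer a proof. It only remarks that the conjecture holds in two special cases: when the generic fibre is a product of Brauer--Severi varieties (where it reduces to Theorem~\ref{thm:trivial}, since then the splitness hypothesis forces $\br \subset \Br X$ and one is simply imposing finitely many local conditions), and for the family of hypersurfaces considered by Poonen--Voloch. There is therefore no proof in the paper against which to compare your proposal; what you have written is a heuristic outline of how a general attack might proceed, and you yourself correctly flag that it is incomplete.

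Two further comments on the outline itself. First, your local density estimate $1 + O(q_v^{-1})$ is too weak as stated: the product $\prod_v(1 + O(q_v^{-1}))$ need not converge, since $\sum_v q_v^{-1}$ diverges. Your own argument in fact gives more. You show that over the open locus where the fibre is smooth and geometrically integral, and over the open locus in each $D$ where the split component of $\pi^{-1}(D)$ spreads out, Lang--Weil plus Hensel (here you implicitly use the almost-smooth hypothesis on $\pi$ to lift smooth points of the special fibre) guarantee local solubility for $q_v$ large. The complement of these loci has codimension at least two in $X$, so the density of non-soluble reductions is $O(q_v^{-2})$, which does give a convergent product. Second, your diagnosis of the principal obstruction is accurate: equidistribution in the sense of Peyre handles conditions at finitely many places, but everywhere-local-solubility is an infinite condition, and bridging this gap requires a uniform error term (a level of distribution) for $N(U,H,B)$ that is not available in the generality of the assumptions on $X$. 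This is precisely why the statement remains a conjecture.
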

We also naturally conjecture that the limit in Conjecture \ref{conj:loc} exists.
Conjecture \ref{conj:loc} holds, for example, if $Y \to X$ is a product of
Brauer-Severi schemes (this follows from Theorem \ref{thm:trivial}).
Its validity in the special case $X = \PP^n$ is a recent theorem of the author,
in joint work with Bright and Browning \cite[Thm.~1.3]{BBL15}.
Unfortunately the converse of Conjecture \ref{conj:loc} does not hold; consider the family of varieties
\begin{equation} \label{eqn:multinorm}
	(x_1^2 - ay_1^2)(x_2^2 - by_2^2)(x_3^2 - aby_3^2) = t \quad \subset \mathbb{A}^6 \times \mathbb{A}^1,
\end{equation}	
for $a,b \in F^*$, viewed as a fibration over $\mathbb{A}^1$. If $a,b,ab \not \in F^{*2}$,
then the fibres over $t=0$ and $t=\infty$ are non-split, however Colliot-Th\'{e}l\`{e}ne \cite[Prop.~5.1(a)]{CT14} has shown
that \emph{every} smooth member of this family is everywhere locally soluble.

Of course there are many interesting families of varieties for which $\pi^{-1}(D)$
can be non-split for some $D \in X^{(1)}$ (families of Brauer-Severi varieties,
quadric bundles of relative dimension two \cite{Sko90}, multinorm equations, elliptic fibrations,\ldots),
and one would also like a conjectural framework in this case. However here
much subtleties arise as \eqref{eqn:multinorm} illustrates. For simplicity
we therefore assume that the fibre $\pi^{-1}(D)$ over each $D \in X^{(1)}$
is integral and that the algebraic closure $F(D)_\pi$ of $F(D)$
in $F(\pi^{-1}(D))$ is Galois.
Our investigations suggest that when $\pi(Y(\Adele_F)) \cap X(F) \neq \emptyset$, there exists an open subset $U \subset X$ and 
$c_{U,\pi,H}>0$ such that
\begin{equation} \label{conj:non-split}
	N_{\mathrm{loc}}(U,H,\pi,B) \sim c_{U,\pi,H} B \frac{(\log B)^{\rho(X)-1}}{(\log B)^{\Delta_X(\pi)}}
\end{equation}
as $B \to \infty$, where 
\begin{equation} \label{eqn:Delta}
	\Delta_X(\pi)=\sum_{D \in X^{(1)}}\left(1 - \frac{1}{[F(D)_{\pi}:F(D)]}\right).
\end{equation}
If the generic fibre of $\pi$ is a product of Brauer-Severi varieties,
then \eqref{conj:non-split} is compatible with
Conjecture \ref{conj:Serre}
(this follows from the results of \S\ref{sec:thm:BS}).

If $F(D)_\pi / F(D)$ is not Galois or $\pi^{-1}(D)$ is not integral, 
then $[F(D)_{\pi}:F(D)]$ in \eqref{eqn:Delta}
should be replaced by the density of an appropriate Frobenian
set of primes.
The example \eqref{eqn:multinorm} is compatible 
with this philosophy; indeed, almost all places of $F$ are split in at least one
of $F(\sqrt{a}),F(\sqrt{b})$ or $F(\sqrt{ab})$ (these being the algebraic closures
of $F$ in the irreducible components of the non-split fibres of \eqref{eqn:multinorm}).
A special case of such a result was also recently obtained in \cite[Thm.~1.3]{BN15}.
Further examples involving Frobenian sets are investigated in great generality in 
forthcoming joint work \cite{LS15} of the author and Smeets.

\subsection*{Outline of the paper:}
In \S \ref{sec:Brauer} we begin by recalling various facts on Brauer groups, Brauer-Severi
varieties and the Brauer-Manin obstruction. We then prove Theorem \ref{thm:BS} (assuming
Theorem \ref{thm:Brauer}) followed by Theorem \ref{thm:Brauer_Manin}. Next we introduce the notion of
subordinate Brauer group elements and study some of their basic properties.
Our main theorem on subordinate Brauer group elements 
concerns their use in calculating unramified Brauer groups of products of Brauer-Severi schemes,
which generalises a result of Colliot-Th\'{e}l\`{e}ne and Swinnerton-Dyer \cite[Thm.~2.2.1]{CTSD94}. We also prove an analogue
for subordinate Brauer group elements of a result of Harari \cite[Thm.~2.1.1]{Har94}, 
which makes clear their relevance in the study of zero-loci of Brauer group elements.

In \S \ref{sec:L-functions} we gather various results on Hecke $L$-functions and certain
partial Euler products which shall naturally appear in the proof of Theorem \ref{thm:Brauer}. We also state here
the version of Delange's Tauberian theorem which we shall use.

In \S \ref{sec:Tori} we begin with some results on the adelic spaces and Brauer groups of algebraic tori,
in particular we calculate their algebraic Brauer group
in terms of automorphic characters.
We then study toric varieties and their Brauer groups over number fields,
and derive an analogue for subordinate Brauer group elements of an exact sequence of Voskresenski\v{\i} \cite{Vos70}
on the unramified Brauer groups of algebraic tori.

In \S \ref{sec:counting} we bring together all the work in the previous sections to prove Theorem \ref{thm:Brauer},
and also give a description of the leading constant.

\subsection*{Acknowledgments:}
Part of this work was completed during the program
``Arithmetic and Geometry'' in 2013 at HIM, Bonn. The author would like to thank
David Bourqui, Martin Bright, Tim Browning, Ariyan Javanpeykar, Arne Smeets and Yuri Tschinkel
for useful comments, and the anonymous referees for their careful reading of our paper.
Special thanks go to Jean-Louis Colliot-Th\'{e}l\`{e}ne,
for many useful conversations and for his help with Theorem \ref{thm:Brauer_Manin}
and Theorem \ref{thm:CTSD}. Thanks also go to Alexei Skorobogatov for his help with the proof of Lemma \ref{lem:dvr_ind},
and to David Harari for pointing out a mistake in an earlier version of Theorem \ref{thm:Brauer_global}.

\subsection*{Notation}
\subsubsection*{Algebra}
Given a ring $R$, we denote by $R^*$ the group of units of $R$.
For a subset $A \subset G$ of a group $G$ we denote by $\langle A \rangle$ the subgroup generated by $A$.
For any element $g \in G$ of finite order, we denote by $|g|$ its order.
Given a topological group $G$, we denote by $\Fourier{G} = \Hom(G,S^1)$ the group of continuous characters of $G$
and $G^\sim = \Hom(G,\QQ/\ZZ)$ the continuous $\QQ/\ZZ$-dual of $G$.
We fix an embedding $\QQ/\ZZ \subset S^1$ so that $G^\sim \subset \Fourier{G}$.
We denote by $\widehat{G}$ the completion of $G$ with respect to the normal open subgroups of finite index.
We make frequent use of the following version of character orthogonality:
Let $G$ be a compact Hausdorff topological group with a Haar measure $\mathrm{d}g$.
Then for any character $\chi$ of $G$ we have
$$\int_G \chi(g) \mathrm{d}g = \left\{
	\begin{array}{ll}
		\vol(G), &\chi=1, \\
		0, & \text{ otherwise}.
	\end{array} \right.$$

\subsubsection*{Number theory}
We denote by $\OO_{F}$ the ring of integers of a number field $F$
and by $\Val(F)$ its set of valuations. For any $v \in \Val(F)$,
we denote by $F_v$ the completion of $F$ at $v$ and by $\OO_v$ its maximal
compact subgroup. Given a non-archimedean place $v$ of $F$, we denote by
$q_v$ the size of the residue field of $F_v$ and $\pi_v$ a choice of uniformiser.
We choose absolute values on each $F_v$ such that $|x|_v=|N_{F_v/\QQ_p}(x)|_p$,
where $v|p \in \Val(\QQ)$ and $|\cdot|_p$ is the usual absolute value on $\QQ_p$.
We normalise our Haar measures $\mathrm{d}x_v$ on each $F_v$ \`{a} la Tate \cite[Ch.~XV]{CF10}.
We have $\vol(\OO_{v}) = 1$
for almost all $v$ and $\vol(\Adele_F/F)=1$ with respect to the associated Haar
measure on the adele group $\Adele_F$ of $F$.

\subsubsection*{Geometry}
For a field $k$, we denote by $\PP_k^n$ and $\mathbb{A}_k^n$ projective $n$-space and affine $n$-space over $k$ respectively.
We sometimes omit the subscript $k$ if the field is clear.  A variety over $k$ is a separated geometrically integral scheme of finite type over $k$.
For every perfect field $k$, we fix a choice of algebraic closure $\kbar$ and we denote by $G_k$ the absolute Galois group of $k$
with respect to $\kbar$. Given a variety $X$ over $k$, we denote $\Xbar=X \times_k \kbar$,
and if $k$ is a number field and $v$ is a place of $k$ then we set $X_v = X\times_k k_v$.
All cohomology will be taken with respect to the \'{e}tale topology.

\section{Brauer groups and Brauer-Severi schemes} \label{sec:Brauer}
In this section we collect various facts on Brauer groups of algebraic varieties and the Brauer-Manin
obstruction, before proving Theorem \ref{thm:BS} and Theorem \ref{thm:Brauer_Manin}.
We then move onto a detailed study of the theory of subordinate Brauer group elements.

\subsection{Brauer groups of varieties}
We now recall some standard facts on Brauer groups,
as  can be found in \cite{Gro68}, \cite[\S1]{CTSD94} and \cite{GS06}.
Let $X$ be a smooth variety over a field $k$ of characteristic zero,
with Brauer group $\Br X = \HH^2(X,\Gm)$.
We let $\Br_0 X=\im(\Br k \to \Br X)$,
$\Br_1 X = \ker(\Br X \to \Br \Xbar)$ 
and $\Br_a X = \Br_1 X / \Br_0 X$.

\subsubsection{The Brauer-pairing}
Fundamental to this paper is the pairing
\begin{align} \label{def:Brauer_pairing}
	\Br X \times X(k)  \to \Br k, \qquad
	(b,x) \mapsto b(x).
\end{align}
Here $b(x)$ denotes the evaluation of $b$ at $x$, namely the pull-back of $b$ via the morphism $\Spec k \to X$
associated to the rational point $x$. This pairing is additive on the left and functorial.
Given a subset $\mathscr{B} \subset \Br X$, we denote its ``zero-locus'' by
\begin{equation} \label{eqn:Brauer_kernel}
	X(k)_{\br}=\{x \in X(k):b(x)=0 \text{ for all } b \in \br\},
\end{equation}
 We  denote by $\thorn_\br$ the indicator function of the set $X(k)_\br$
(or simply $\thorn$ if $\br$ is clear).

\subsubsection{Residues}
For any discrete valuation $v$ on the function field $k(X)$ of $X$
there is a residue map
$$\res_v: \Br k(X) \to \HH^1(k(v),\QQ/\ZZ),$$
where $k(v)$ denotes the residue field of $v$. If $R \subset k(X)$ is the associated
discrete valuation ring $R$, we shall  sometimes denote this by $\res_R$.
If $D \in X^{(1)}$, we shall also denote by $\res_D$
the residue map associated to the corresponding discrete valuation. 

\subsubsection{Purity and the unramified Brauer group}
If $U \subset X$ is a dense open subset, then by Grothendieck's purity theorem the residue maps give
rise to exact sequences 
\begin{equation} \label{seq:purity0}
	0 \to \Br X \to \Br U \to \smashoperator{\bigoplus_{D \in X^{(1)} \setminus U^{(1)}}} \HH^1(k(D),\QQ/\ZZ),
\end{equation}
and
\begin{equation} \label{seq:purity}
	0 \to \Br_1 X \to \Br_1 U \to \smashoperator{\bigoplus_{D \in X^{(1)} \setminus U^{(1)}}} \HH^1(k_D,\QQ/\ZZ),
\end{equation}
where $k_D=\kbar \cap k(D) \subset \kbar(D)$ (see \cite[Lem.~14]{CTS77} for this last sequence).
We also have
$$\Br X = \{b \in \Br k(X) : \res_v(b) = 0 \text{ for all } v \in X^{(1)}\}.$$
The unramified Brauer group $\Brnr(k(X)/k)$ is defined to be the group formed by those $b \in \Br k(X)$
for which $\res_v(b)=0$ for all discrete valuations
$v$ of $k(X)$ which are trivial on $k$. We write this as $\Brnr X$ if $k$ is clear.
It is isomorphic to the Brauer group of any smooth proper
model of $X$ (see \cite[Thm.~1.3.2]{CTSD94}).

\subsection{Brauer groups over number fields} \label{sec:Br_fields}
Let $F$ be a number field. 
The sum of the invariant maps $\inv_v$ gives rise to the following fundamental
short exact sequence
\begin{equation} \label{eqn:CFT}
	0 \to \Br F \to \bigoplus_{v \in \Val(F)} \Br F_v \to \QQ/\ZZ \to 0
\end{equation}
from class field theory.
If $X$ is a smooth variety over $F$, we let
\begin{equation} \label{def:Be}
	\Be(X)=\ker\left(\Br_1 X \to \prod_{v \in \Val(F)} \Br_1 X_{v}\right).
\end{equation}
For all $v \in \Val(F)$ the local pairings	
$$\Br X_v \times X(F_v) \to \QQ/\ZZ,$$
are locally constant on the right. Moreover, for $b \in \Br X$ and a model $\mathcal{X}$
for $X$ over $\OO_F$, the induced map $\mathcal{X}(\OO_v) \to \QQ/\ZZ$ is trivial for
almost all $v$ (i.e.~only takes the value $0$ for almost all $v$).
In particular, the sum of the local pairings gives rise to well-defined pairings
$$\Br X \times X(\Adele_F) \to \QQ/\ZZ, \quad \Brnr X \times \smashoperator{\prod_{v \in \Val(F)}}X(F_v) \to \QQ/\ZZ,$$
which are locally constant on the right and trivial on $X(F)$.
For any $\br \subset \Br X$ we let
$$X(\Adele_F)_\br=\{(x_v) \in X(\Adele_F):b(x_v)=0 \text{ for all } v \in \Val(F) \text{ and all } b \in \br\},$$
denote the ``adelic zero-locus'' of $\br$. The left-exactness of (\ref{eqn:CFT}) implies that
$X(F)_\br = X(\Adele_F)_\br \cap X(F)$.
Moreover
$$
	 X(\Adele_F) \to \{0,1\}, \qquad
	(x_v) \mapsto \prod_{v \in \Val(F)} \thorn_{\br_v}(x_v),
$$
is a continuous extension of $\thorn_\br$ to $X(\Adele_F)$, which by abuse of notation we shall also denote by $\thorn_\br$.
For any subset $Z \subset X(\Adele_F)$, we shall write $Z_\br =Z \cap X(\Adele_F)_\br$.

\subsubsection{The Brauer-Manin obstruction}
We now recall some facts about the Brauer-Manin obstruction as can
be found in \cite[\S5]{Sko01}. We denote by
$\overline{X(F)}$ (resp.~$\smash{\overline{X(F)}^w}$) the closure of $X(F)$ in $X(\Adele_F)$ 
with respect to the adelic (resp.~product) topology.
Given a subset $\br \subset \Br X$ we shall denote by
$$X(\Adele_F)^\br = \{ (x_v) \in X(\Adele_F): \sum_{v \in \Val(F)} \inv_v b(x_v)=0 \text{ for all } b \in \br\}.$$
We shall combine this notation with the notation for zero-loci of Brauer group elements.
Namely for any $\br_1,\br_2 \subset \Br X$
and any subset $Z \subset X(\Adele_F)$ the notation $\smash{Z^{\br_2}_{\br_1}}$ means
$$Z^{\br_2}_{\br_1} = \left\{ (x_v) \in Z:
\begin{array}{l}
	b_1(x_v)=0 \text{ for all } b_1 \in \br_1\text{ and all } v \in \Val(F),\\
	\sum_{v \in \Val(F)} \inv_v b_2(x_v)=0 \text{ for all } b_2 \in \br_2
\end{array}
\right\}.$$
If $\br \subset \Brnr X$, we denote by
$$\left(\prod_{v \in \Val(F)}X(F_v)\right)^\br = \{ (x_v) \in \prod_{v \in \Val(F)}X(F_v): \sum_{v \in \Val(F)} \inv_v b(x_v)=0 \text{ for all } b \in \br\}.$$
We say that the Brauer-Manin obstruction is the only one to the Hasse principle
and weak approximation for $X$ if $X(F)$ is dense in $(\smash{\prod_{v \in \Val(F)}} X(F_v))^{\Brnr X}$.
When $\Brnr X/ \Br_0 X$ is finite,
the Brauer-Manin obstruction is the only one to the Hasse principle
and weak approximation for $X$ if and only if
$\overline{X(F)} = X(\Adele_F)^{\Brnr X}.$

\subsection{Brauer-Severi schemes and their products} \label{sec:Brauer-Severi_schemes}
Let $X$ be a smooth  variety over a field $k$ of characteristic zero.
A Brauer-Severi scheme of relative dimension $n$ over $X$ is a scheme $Y$ over $X$ which is \'{e}tale
locally on $X$ isomorphic to $\PP_X^n$. This corresponds to an element of
$\HH^1(X,\PGL_{n+1})$. Consider the exact sequence
\begin{equation} \label{eqn:Brauer-Severi}
\HH^1(X,\GL_{n+1}) \to \HH^1(X,\PGL_{n+1}) \stackrel{}{\to} \Br X,
\end{equation}
of pointed sets. Denote by $[Y]$ the corresponding
element of $\Br X$. If $X$ is quasi-projective, then a 
theorem of Gabber \cite{Gab81} states that every
element of $\Br X$ is of the form $[Y]$ for some Brauer-Severi scheme $Y$ over $X$.

The exactness of (\ref{eqn:Brauer-Severi}) implies that $[Y]=0$ if and only if
$Y$ is \emph{Zariski} locally on $X$  isomorphic to $\PP^n_X$.
In particular, a Brauer-Severi variety over $k$
has trivial class in $\Br k$ if and only if it is isomorphic to $\PP^n_k$,
which occurs if
and only if it has a rational point. Hence the inclusion $\Br X \subset \Br k(X)$ 
implies that $[Y]=0$ if and only if the morphism $Y \to X$ has a rational section.
In particular $[Y] \in \Br_1 X$ if and only if $Y \to X$ admits a rational section over $\kbar$.
Similar remarks apply to products of Brauer-Severi schemes. For example, if 
$Y=Y_1 \times_X \cdots \times_X Y_r$ is a product of Brauer-Severi schemes over $X$, then $Y \to X$
has a rational section if and only if $[Y_i]=0$ for each $i=1,\ldots,r$.

\subsection{Proof of Theorem \ref{thm:BS}} \label{sec:thm:BS}
We now begin the proof of Theorem \ref{thm:BS} (assuming Theorem \ref{thm:Brauer}).
Let $\pi:Y \to X$ and $U \subset X$ be as in Theorem \ref{thm:BS}. Let $\br \subset \Br U$
denote the subgroup corresponding to the product of Brauer-Severi schemes given by $\pi^{-1}(U) \to U$.
The remarks in the previous section immediately imply that
$$N(U,H,\br,B) =N(U,H,\pi,B),$$
in the notation of Theorem \ref{thm:Brauer} and Theorem \ref{thm:BS}.
Therefore to prove the result, it suffices to show that $\Delta_X(\br)=\Delta_X(\pi)$.
To do this we may work locally around each divisor,
so we begin with some results on schemes over discrete valuation rings.
Throughout this section we let $R$ be a discrete valuation ring with field of fractions $K$
and perfect residue field $k$. We shall work with the following types
of schemes.

\begin{definition} \label{def:almost_smooth}
	Let $X$ be a finite type separated scheme over $R$. We say that $X$ is \emph{almost smooth}
	over $R$ if its generic fibre is smooth and the smooth locus $X^{\mathrm{sm}}$ 
	of $X$ over $R$ is a weak N\'{e}ron model \cite[Def.~3.5.1]{BLR90}
	for $X$, i.e.,~for any \'{e}tale $R$-algebra $R'$, each $R'$-point of
	$X$ lies in the smooth locus $X^{\mathrm{sm}}$ of $X$. 
	
	We say that a scheme $X$ over a normal
	scheme $Y$ is \emph{almost smooth} if it is almost smooth over the local ring of every codimension one point of $Y$.
\end{definition}
Examples of almost smooth schemes include smooth schemes  and regular schemes (see \cite[p.~61]{BLR90}).
The proof of the following result is based on the proof of \cite[Lem.~3.8]{Wit07} (see also \cite[Lem.~1.1]{Sko96}).
For an integral scheme $X$ of finite type over $k$, we denote by $k_X$ the algebraic closure of $k$ in $k(X)$,
which is a finite field extension of $k$.
\begin{lemma} \label{lem:dvr_ind}
	Let $\pi_i : X_i \to \Spec R$ be flat proper almost smooth integral schemes over $R$, for $i=1,2$.
	Assume that the generic fibres of $\pi_1$ and $\pi_2$ are isomorphic.
	Then for each irreducible component $D_1$ of multiplicity one of the special fibre of $\pi_1$ there exists
	an irreducible component $D_2$ of multiplicity one of the special fibre of $\pi_2$ such that
	$$k_{D_2} \subset k_{D_1}.$$
	Choose such a component $D_1$ (if it exists) for which $[k : k_{D_1}]$ is minimal amongst
	the irreducible components of multiplicity one of the special fibre of $\pi_1$.
	Then there exists an irreducible component $D_2$ of multiplicity one of the special fibre of $\pi_2$ such that
	$$k_{D_1} \cong k_{D_2}, \quad \mbox{as $k$-algebras.}$$	
\end{lemma}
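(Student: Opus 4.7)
The plan is to combine the valuative criterion of properness, applied to the discrete valuation ring $A_1 = \mathcal{O}_{X_1, \eta_{D_1}}$ (the local ring at the generic point $\eta_{D_1}$ of $D_1$), with the almost smoothness hypothesis. Because $D_1$ has multiplicity one, the uniformizer of $R$ is a uniformizer of $A_1$, so $A_1$ is a DVR with residue field $k(D_1)$. The isomorphism of generic fibres combined with properness of $\pi_2$ produces a unique morphism $\Spec A_1 \to X_2$ extending the generic point of $X_{1,K}$, whose closed point I would denote $z \in X_{2,k}$. The induced local homomorphism on stalks yields an embedding $k(z) \hookrightarrow k(D_1)$ of residue fields.

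For any irreducible component $D_2$ of $X_{2,k}$ containing $z$, geometric integrality of $D_2$ over $k_{D_2}$ forces the structure morphism $D_2 \to \Spec k_{D_2}$ to induce an embedding $k_{D_2} \hookrightarrow k(w)$ at every $w \in D_2$; applying this at $w=z$ and composing with the previous embedding gives $k_{D_2} \hookrightarrow k(D_1)$, and since $k_{D_2}/k$ is algebraic this forces $k_{D_2} \subseteq k_{D_1}$. It remains to exhibit such a $D_2$ of multiplicity one. The generic point $\eta_{D_1}$ is a smooth point of $\pi_1$: multiplicity one and flatness force the special fibre to be reduced at $\eta_{D_1}$, and since $k$ is perfect the residue field extension $k(D_1)/k$ is separable. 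Smoothness being open, I would then pick a closed point $y \in D_1$ in the smooth locus of $\pi_1$; by smoothness and Hensel's lemma this lifts to an $R^y$-point $\psi_1: \Spec R^y \to X_1$, where $R^y/R$ is the finite étale extension of residue field $k(y)$. The iso of generic fibres and properness of $\pi_2$ then yield $\psi_2 : \Spec R^y \to X_2$, and by the almost smoothness hypothesis the image $y'$ of the closed point of $\Spec R^y$ lies in the smooth locus of $\pi_2$, hence on a unique multiplicity-one component $D_2$ of $X_{2,k}$. A compatibility argument (based on the factorisation $\mathcal{O}_{X_1,y} \to A_1$ expressing $A_1$ as a localization of $\mathcal{O}_{X_1,y}$ at the prime of $D_1$, together with the uniqueness statement in the valuative criterion for the common generic-fibre morphism) then shows $y' \in \overline{\{z\}}$, so $z \in D_2$ and the first statement follows.

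For the second statement, applying the first part to the chosen minimal $D_1$ yields $D_2 \subseteq X_{2,k}$ with $k_{D_2} \subseteq k_{D_1}$; applying it again with the roles of $X_1$ and $X_2$ swapped, starting from $D_2$, produces a multiplicity-one component $D_1' \subseteq X_{1,k}$ with $k_{D_1'} \subseteq k_{D_2} \subseteq k_{D_1}$. The resulting chain $[k_{D_1'}:k] \leq [k_{D_2}:k] \leq [k_{D_1}:k]$, combined with the minimality of $[k_{D_1}:k]$ among multiplicity-one components of the special fibre of $\pi_1$, forces equality throughout, whence $k_{D_1} \cong k_{D_2}$ as $k$-algebras.

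The main obstacle is the compatibility assertion $y' \in \overline{\{z\}}$ in the middle paragraph: since $A_1$ is not an étale $R$-algebra, the almost smoothness hypothesis does not apply directly to the morphism $\Spec A_1 \to X_2$, so to reach the smooth locus of $\pi_2$ one must go through the closed point $y$, and then argue that the two different valuative-criterion extensions (the one from $A_1$ and the one from $R^y$) are linked by the specialization of $\eta_{D_1}$ to $y$ in $X_1$. This is the technical heart of the lemma, and is exactly the point inherited from the argument of Wittenberg cited in the statement.
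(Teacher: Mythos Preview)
Your opening move---applying the valuative criterion to $A_1=\mathcal{O}_{X_1,\eta_{D_1}}$ and recording the image $z$ of the closed point in $X_2$---is exactly how the paper begins. The divergence comes immediately after. You worry, correctly from the bare definition, that $A_1$ is not an \'etale $R$-algebra, and you therefore pass to a closed point $y\in D_1$, lift to an $R^y$-point, and are left with the compatibility problem $y'\in\overline{\{z\}}$. The paper sidesteps all of this: it cites \cite[Lem.~3.6.5]{BLR90} to assert that the almost-smooth hypothesis (identity map a smoothening) in fact guarantees that \emph{every} $R'$-point lands in the smooth locus once $R'$ is a DVR with ramification index one over $R$, not merely when $R'$ is \'etale. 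Since $D_1$ has multiplicity one, $R\subset A_1$ has ramification index one, so $z$ lies in the smooth locus of $\pi_2$ and hence on a unique multiplicity-one component $D_2$. The chain $k_{D_2}\hookrightarrow k(z)\hookrightarrow k(D_1)$ then follows as in your argument (the paper uses that $\mathcal{O}_{X_2,z}$, being regular, is integrally closed).

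Your longer route can be completed: once the valuative criterion shows the birational map $g:X_1\dashrightarrow X_2$ is defined at $\eta_{D_1}$, its domain of definition is open and meets $D_1$, so one may choose the closed point $y$ there; then $\psi_2=g\circ\psi_1$ by uniqueness in the valuative criterion, giving $y'=g(y)\in\overline{\{g(\eta_{D_1})\}}=\overline{\{z\}}$ by continuity. But this is unnecessary once one has the BLR input, and your proposal as written leaves this step as an acknowledged obstacle rather than a completed argument. Your treatment of the second statement matches the paper's exactly.
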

\begin{proof}
	Let $d_1$ be the generic point of $D_1$ and let $R_1$ be the local ring 
	at $d_1$; this is a discrete valuation ring by flatness.
	Let $K_1=k(X_1)$ be the field of fractions of $R_1$.
	The inclusion  $\Spec K_1 \hookrightarrow X_1$
	yields a $K_1$-point of $X_1$, hence a $K_1$-point of $X_2$
	which extends to a unique morphism $f_1:\Spec R_1 \to X_2$
	by properness. Let $c_2=f_1(d_1)$.
	Then the residue field $k(c_2)$ of $c_2$ embeds $k$-linearly
	in the residue field $k(d_1)$ of $d_1$. Moreover since
	$R \subset R_1$ has ramification index one and $\pi_1$ is almost smooth,
	we see that $c_2$ lies in the smooth locus of $\pi_2$ (see \cite[Lem.~3.6.5]{BLR90}).
	Hence $c_2$ lies in a unique irreducible component $D_2$ of multiplicity one of the special fibre of $\pi_2$.
	Let $d_2$ denote the generic point of $D_2$.
	Note that the local rings of $d_2$ and $c_2$ have the same field of fractions
	and moreover the local ring at $c_2$ is integrally closed as it is a regular local ring.
	It follows that $k_{D_2}$ embeds $k$-linearly into the local ring of $c_2$
	and we obtain a $k$-linear embedding $k_{D_2} \hookrightarrow k(c_2)$.
	We have constructed a sequence of inclusions
	$$k \subset k_{D_2} \subset k(c_2) \subset k(d_1).$$
	As $k \subset k_{D_2}$ has finite degree, we see that it lies inside the algebraic
	closure $k_{D_1}$ of $k$ inside $k(d_1) = k(D_1)$, which proves the first part of the lemma.
	To prove the second part of the lemma, choose some $D_2$
	such that $k_{D_2} \subset k_{D_1}$.
	Choose also an irreducible component $D_1'$ of multiplicity one of the special fibre of $\pi_1$ such that $k_{D_1'} \subset k_{D_2}$.
	As $[k_{D_1'}:k] \geq [k_{D_1}:k]$ by assumption, we find that $k_{D_1'} = k_{D_1}$,
	and the result follows.
\end{proof}

The next result, which is a simple application of the work
of Artin \cite{Art82} and Frossard \cite{Fro97}, shows the existence of particularly 
well-behaved almost smooth models for products of Brauer-Severi schemes.

\begin{lemma} \label{lem:AF}
	Let $V = V_1 \times_K \cdots \times_K V_r$ be a product of Brauer-Severi varieties over $K$.
	Then there exists a flat proper almost smooth integral scheme $\mathcal{V} \to \Spec R$
	whose generic fibre is isomorphic to $V$ and whose special fibre is reduced, such that the
	algebraic closure of $k$ in the function field of each irreducible component of the special fibre
	is the compositum of the cyclic field extensions determined by the residues $\res_R([V_1]),\ldots,\res_R([V_r])$.
\end{lemma}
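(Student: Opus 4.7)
The plan is to construct $\mathcal{V}$ by combining Artin's construction \cite{Art82} of regular proper models for individual Brauer-Severi varieties over DVRs with Frossard's detailed analysis \cite{Fro97} of the degenerate fibres, and then passing to a fibre product over $R$.

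First, for each $i=1,\ldots,r$ I would invoke the work of Artin and Frossard to produce a flat proper regular model $\mathcal{V}_i \to \Spec R$ with generic fibre $V_i$, such that the special fibre $(\mathcal{V}_i)_k$ is reduced and the algebraic closure of $k$ in the function field of every one of its irreducible components coincides with the cyclic extension $L_i/k$ corresponding to the residue $\chi_i := \res_R([V_i]) \in \Hom(G_k,\QQ/\ZZ)$. Since $\mathcal{V}_i$ is regular, it is in particular almost smooth over $R$.

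Next, I would form the fibre product $\mathcal{V}' = \mathcal{V}_1 \times_R \cdots \times_R \mathcal{V}_r$, whose generic fibre is $V$. Because $V$ is geometrically integral over $K$ (it becomes a product of projective spaces over $\overline{K}$), there is a unique irreducible component of $\mathcal{V}'$ dominating $\Spec R$; let $\mathcal{V}$ denote this component equipped with its reduced induced structure. Flatness, properness and integrality of $\mathcal{V}$ are then immediate, and the generic fibre is still $V$. Each irreducible component of $\mathcal{V}_k$ is an irreducible component of a product $D_{1,j_1} \times_k \cdots \times_k D_{r,j_r}$ of irreducible components of the $(\mathcal{V}_i)_k$. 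Since $k$ is perfect and each $D_{i,j_i}$ is geometrically integral over $L_i$, Galois theory identifies these irreducible components with the factors of the étale $k$-algebra $L_1 \otimes_k \cdots \otimes_k L_r$, and the algebraic closure of $k$ in the function field of each such component is a copy of the compositum $L_1 \cdots L_r$ inside $\overline{k}$, as required. Reducedness of $\mathcal{V}_k$ follows from the reducedness of each $(\mathcal{V}_i)_k$ together with étaleness of the relevant tensor product.

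The principal obstacle will be verifying almost smoothness of $\mathcal{V}$ over $R$, since a fibre product of regular schemes over $R$ is in general not regular, and restricting to the dominating component can introduce fresh singularities. Given an étale $R$-algebra $R'$ and an $R'$-point $x$ of $\mathcal{V}$, projecting onto each factor yields $R'$-points $x_i$ of $\mathcal{V}_i$, and regularity of $\mathcal{V}_i$ ensures that each $x_i$ specialises into the smooth locus of a single irreducible component of $(\mathcal{V}_i)_k$. Combining these specialisations with the description of the components of $\mathcal{V}_k$ above, I would show that $x$ specialises into the smooth locus of a single component of $\mathcal{V}_k$. This reduces to a local computation at points where distinct components of $\mathcal{V}_k$ meet, which I expect to dispatch by a direct calculation in the strictly Henselian local rings, using the fact that the intersection pattern of the components is governed entirely by the finite étale $k$-algebra $L_1 \otimes_k \cdots \otimes_k L_r$ and hence no transverse singularities appear along the multiple loci.
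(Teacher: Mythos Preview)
Your approach is essentially the same as the paper's, and it works, but you are making it harder than necessary at two points.

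First, Artin's models $\mathcal{V}_i$ have \emph{integral} special fibres, not merely reduced ones with several components. With this in hand, the fibre product $\mathcal{V}' = \mathcal{V}_1 \times_R \cdots \times_R \mathcal{V}_r$ is already integral: it is flat over $R$ with geometrically integral generic fibre, so it is irreducible, and its special fibre is a product over the perfect field $k$ of integral $k$-schemes, hence reduced, which forces the total space to be reduced. There is therefore no need to pass to a dominating component with reduced structure; you may take $\mathcal{V} = \mathcal{V}'$.

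Second, almost smoothness of $\mathcal{V}$ requires no local computation in strictly Henselian rings. Given an $R'$-point $x$ of $\mathcal{V}$ with $R'$ \'etale over $R$, its projections $x_i \in \mathcal{V}_i(R')$ land in the smooth loci $U_i \subset \mathcal{V}_i$ because each $\mathcal{V}_i$ is regular and hence almost smooth. But $U_1 \times_R \cdots \times_R U_r$ is smooth over $R$ (smoothness is stable under fibre product over a base), and this is an open subscheme of $\mathcal{V}$ containing $x$. That is the whole argument; the paper simply says it is ``easy to see''. Your worry about ``fresh singularities'' from restricting to a component evaporates once you know $\mathcal{V}' = \mathcal{V}$.

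The description of the components of the special fibre and their fields of constants via $L_1 \otimes_k \cdots \otimes_k L_r$ is exactly what the paper does.
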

\begin{proof}
	Artin \cite[Thm.~1.4]{Art82} has constructed
	regular flat proper integral schemes $\mathcal{V}_i \to \Spec R$
	whose generic fibres are isomorphic to $V_i$ and whose special fibres are integral, for each $i=1,\ldots,r$.
	Frossard \cite[Prop.~2.3]{Fro97} has shown that the algebraic closure of $k$
	inside the function field of the special fibre of $\mathcal{V}_i$ is exactly the cyclic field extension
	of $k$ determined by the residue $\res_R([V_i])$.
	We now take $\mathcal{V}=\mathcal{V}_1 \times_R \cdots \times_R \mathcal{V}_r$.
	This is obviously flat and proper over $R$ and integral. Moreover, it is easy
	to see that it is almost smooth and that the special fibre is reduced as $k$ is perfect.
	The result therefore follows from the fact that the tensor product of Galois field extensions
	is isomorphic to a direct power of the compositum of those field extensions.
\end{proof}
One cannot in general use the fact that the $\mathcal{V}_i$ are regular
to deduce that $\mathcal{V}$ itself is regular (regularity can fail to hold even for
the fibre product of a conic bundle with itself). It is for this reason that we have introduced
the notion of ``almost smooth'', as it allows us to avoid the need to construct an explicit desingularisation
of $\mathcal{V}$.

\begin{lemma} \label{lem:Delta=Delta}
	Let $X$ be a smooth variety over a field $k$
	of characteristic zero. Let $Y$ be an integral scheme together
	with a proper surjective almost smooth morphism $\pi:Y \to X$.
	Suppose that the generic fibre of $\pi$ is isomorphic to a product of Brauer-Severi varieties $V_1,\ldots,V_r$
	over $k(X)$. Then the fibre over each point of codimension one contains an irreducible component
	of multiplicity one.
	
	For each $D \in X^{(1)}$, choose an irreducible component
	$D'$ of $\pi^{-1}(D)$ of multiplicity one such that $[k(D)_{D'}:k(D)]$
	is minimal amongst all irreducible components of $\pi^{-1}(D)$ of multiplicity one.
	Then for any $D \in X^{(1)}$	we have
	$$[k(D)_{D'}:k(D)] = |\langle \res_D(\br) \rangle|,$$
	where $\br = \{[V_1],\ldots,[V_r]\} \subset \Br k(X)$.
\end{lemma}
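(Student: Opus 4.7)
The plan is to reduce both assertions to a purely local problem at each $D \in X^{(1)}$ and then to compare the given model $\pi : Y \to X$ with the reference model supplied by Lemma \ref{lem:AF}. First I would pass to the local ring $R = \OO_{X,D}$, with field of fractions $K = k(X)$ and residue field $k(D)$, and base change to get $\pi_R : Y_R \to \Spec R$. Since $Y$ is integral and dominates $\Spec R$, the scheme $Y_R$ is integral and $\pi_R$ is flat, proper and almost smooth, with generic fibre $V = V_1 \times_K \cdots \times_K V_r$; the irreducible components of the special fibre of $\pi_R$, with their multiplicities and their fields of constants, correspond bijectively to those of $\pi^{-1}(D)$, so it suffices to work with $\pi_R$.

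Next, Lemma \ref{lem:AF} supplies a second flat proper almost smooth integral model $\mathcal{V} \to \Spec R$ of $V$ whose special fibre is reduced and such that, on every irreducible component $D_0$ of that special fibre, the algebraic closure of $k(D)$ in $k(D_0)$ equals the compositum $L$ of the cyclic extensions of $k(D)$ cut out by $\res_D([V_1]), \ldots, \res_D([V_r])$. Applying the first part of Lemma \ref{lem:dvr_ind} with $\pi_1 = \mathcal{V}$ and $\pi_2 = \pi_R$, each component of the special fibre of $\mathcal{V}$ (all of multiplicity one, by reducedness) forces the existence of a multiplicity-one component of the special fibre of $\pi_R$; this proves the first assertion. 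Having secured that, I may choose $D'$ as in the statement, and the second part of Lemma \ref{lem:dvr_ind}, applied in the opposite direction with $\pi_1 = \pi_R$ and $\pi_2 = \mathcal{V}$, yields a multiplicity-one component $D_2$ of $\mathcal{V}$ with $k(D)_{D'} \cong k(D)_{D_2}$ as $k(D)$-algebras; by construction of $\mathcal{V}$ this common field is exactly $L$.

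It remains to identify $[L : k(D)]$ with $|\langle \res_D(\br) \rangle|$, which I would deduce from Pontryagin duality for finite abelian groups: $L$ is the fixed field of $\bigcap_{b \in \br} \ker \res_D(b)$, so $\Gal(L/k(D))$ and $\langle \res_D(\br)\rangle \subset \Hom(G_{k(D)},\QQ/\ZZ)$ are dual finite abelian groups and hence of equal order. The substantive content of the argument is really packaged into the two earlier lemmas, and I expect the only delicate point in executing my plan to be making sure that the hypotheses genuinely transfer: that $\pi_R$ inherits flatness, integrality and almost smoothness from $\pi$, and that the almost smoothness hypothesis is precisely what allows the valuative-criterion argument of Lemma \ref{lem:dvr_ind} to identify minimal-degree multiplicity-one components across the two models. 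Once these points are verified, the proof is simply the two-directional application of Lemma \ref{lem:dvr_ind} sandwiched around the explicit reference model $\mathcal{V}$.
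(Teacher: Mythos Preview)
Your proposal is correct and follows essentially the same approach as the paper's own proof, which simply says to work locally at each $D \in X^{(1)}$ and apply Lemma~\ref{lem:dvr_ind} to compare $\pi$ with the reference model of Lemma~\ref{lem:AF}. You have spelled out the two-directional application of Lemma~\ref{lem:dvr_ind} and the identification $[L:k(D)]=|\langle\res_D(\br)\rangle|$ in more detail than the paper does, but the strategy is identical.
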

\begin{proof}
	To prove the result, we may work locally near each point $D \in X^{(1)}$.
    The result therefore follows from applying Lemma \ref{lem:dvr_ind}
    to the models given in Lemma \ref{lem:AF}.
\end{proof}
\noindent
This shows that $\Delta_X(\pi) = \Delta_X(\br)$, hence completes the proof of Theorem \ref{thm:BS}. \qed

\subsection{Proof of Theorem \ref{thm:Brauer_Manin}} \label{sec:Thm_Brauer_Manin}
We begin the proof of Theorem \ref{thm:Brauer_Manin} with some results
on Brauer-Severi schemes which correspond to \emph{algebraic} Brauer group elements.

Let $U$ be a smooth variety over a field $k$ of characteristic zero 
and let $V=V_1 \times_U \cdots \times_U V_r$ be a product
of Brauer-Severi schemes over $U$, with $[V_i] \in \Br_1 U$ for each $i$.
Fix a finite field extension $K$ of $k$ such that $[V_i] \otimes_k K =0 \in \Br U_{K}$ for each $i$.
We define $S$ to be the algebraic torus over $k$ given by the exact sequence
\begin{equation} \label{def:S}
	1 \to \Gm^r \to (\Res{K}{k}\Gm)^r \to S \to 1.
\end{equation}
Note that $S$ is rational since it may be embedded as an open subset of a product of projective spaces.
Applying \'{e}tale cohomology we obtain an exact sequence
\begin{equation} \label{eqn:S-torsor}
	(\Pic U_K)^r \to \HH^1(U,S) \stackrel{\delta}{\to} (\Br U)^r \to (\Br U_{K})^r.
\end{equation}
Here we have used the isomorphisms $\HH^j(U,\Res{K}{k} \Gm) \cong \HH^j(U_{K}, \Gm)$ for $j=1,2$,
from Shapiro's lemma.
As $([V_1],\ldots,[V_r])$ lies in the kernel of the last map in (\ref{eqn:S-torsor}),
there exists a (non-unique) $U$--torsor $W$ under $S$ whose
class $[W] \in \HH^1(U,S)$ maps to $([V_1],\ldots,[V_r])$. The specialisation maps
for the $V_i$ and $W$ are very closely related.

\begin{lemma}\label{lem:specialisation}
	Let $k \subset L$ be a field extension and let $u \in U(L)$. Then
	$$[W](u)=0 \in \HH^1(L, S_L) \text{ if and only if } ([V_1](u),\ldots, [V_r](u))=0 \in (\Br L)^r.$$
\end{lemma}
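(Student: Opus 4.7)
The plan is to pull back the exact sequence \eqref{eqn:S-torsor} along the $L$-point $u : \Spec L \to U$, and to observe that the resulting sequence over $\Spec L$ is much simpler than the one over $U$ because Picard groups of fields vanish. All the action will take place in the naturality square of the connecting map $\delta$.

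First I would apply the same derivation that produced \eqref{eqn:S-torsor} to $\Spec L$ in place of $U$, using Shapiro's lemma in the form $H^j(\Spec L, \Res{K}{k}\Gm) \cong H^j(\Spec(L\otimes_k K), \Gm)$. Since $K/k$ is separable (we are in characteristic zero), $L\otimes_k K$ is a finite product of separable field extensions of $L$, so both $\Pic \Spec L$ and $\Pic\Spec(L\otimes_k K)$ vanish. The long exact cohomology sequence attached to $1 \to \Gm^r \to (\Res{K}{k}\Gm)^r \to S \to 1$ therefore collapses to an injection
$$0 \to H^1(L, S_L) \stackrel{\delta_L}{\to} (\Br L)^r \to (\Br(L\otimes_k K))^r.$$

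Next I would invoke the naturality of connecting homomorphisms in cohomology with respect to the pullback functor $u^{*}$ to obtain a commutative square
$$\begin{array}{ccc}
H^1(U, S) & \stackrel{\delta}{\longrightarrow} & (\Br U)^r \\
\downarrow u^{*} & & \downarrow u^{*} \\
H^1(L, S_L) & \stackrel{\delta_L}{\longrightarrow} & (\Br L)^r.
\end{array}$$
By construction $u^{*}[W] = [W](u)$ and $u^{*}[V_i] = [V_i](u)$, while $\delta([W]) = ([V_1],\ldots,[V_r])$ by the choice of $W$. Hence $\delta_L([W](u)) = ([V_1](u), \ldots, [V_r](u))$ in $(\Br L)^r$.

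The conclusion is then immediate: since $\delta_L$ is injective, $[W](u)$ vanishes in $H^1(L, S_L)$ if and only if its image $([V_1](u), \ldots, [V_r](u))$ vanishes in $(\Br L)^r$. There is no serious obstacle here; the only points requiring a moment of care are verifying that $L\otimes_k K$ really is a product of fields (hence has trivial Picard group) and invoking the naturality of $\delta$ in the correct functorial form.
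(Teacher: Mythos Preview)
Your proposal is correct and follows essentially the same approach as the paper: both pull back the exact sequence \eqref{eqn:S-torsor} to $\Spec L$ via functoriality, obtain the commutative square relating $\delta$ and $\delta_L$, and use the vanishing of $\Pic(K\otimes_k L)$ to conclude that $\delta_L$ is injective. The only cosmetic difference is that the paper phrases the vanishing of $\Pic(K\otimes_k L)$ as a consequence of Hilbert's theorem~90, whereas you invoke the fact that $K\otimes_k L$ is a finite product of fields; these are the same observation.
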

\begin{proof}
	The exact sequence \eqref{def:S} yields a commutative diagram with exact top row
	$$
	\xymatrix{0=\Pic (K \otimes_k L)^r \ar[r]&	\HH^1(L,S_L)  \ar[d] \ar[r]& (\Br L)^r \ar[d] \\
	& \HH^1(U_L,S_L)  \ar[r]& (\Br U_L)^r. }
	$$
	The top right arrow maps $W(u)$ to $([V_1](u),\ldots, [V_r](u))$, hence the result.
\end{proof}

Next, recall that a variety $V_1$ is said to be
\emph{stably birational} to a variety $V_2$ if there exists $n_1$ and $n_2$ such that
$V_1 \times \PP^{n_1}$ is birational to $V_2 \times \PP^{n_2}$.

\begin{lemma} \label{lem:stably_birational}
	$V$ is stably birational to $W$.
\end{lemma}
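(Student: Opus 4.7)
The plan is to exhibit a natural partial compactification $\bar W$ of $W$ decomposing as a product of Brauer-Severi schemes over $U$ which carry the Brauer classes $[V_i]$, and then to conclude via an Amitsur-type stable birational equivalence argument for Brauer-Severi schemes sharing the same class.

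Write $S = \prod_{i=1}^r T_i$ with $T_i = \Res{K}{k}\Gm / \Gm$, so that the $S$-torsor $W$ decomposes as $W = W_1 \times_U \cdots \times_U W_r$, where each $W_i$ is a $U$-torsor under $T_i$ with $\delta_i([W_i]) = [V_i]$; here $\delta_i$ denotes the connecting map of $1 \to \Gm \to \Res{K}{k}\Gm \to T_i \to 1$. Multiplication of $K$ on itself yields an embedding $T_i \hookrightarrow \PGL_{[K:k]}$, via which $T_i$ acts on $\bar T_i := \PP_k^{[K:k]-1}$. Set $\bar W_i := W_i \times^{T_i} \bar T_i$ and $\bar W := \bar W_1 \times_U \cdots \times_U \bar W_r$. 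Each $\bar W_i$ is a Brauer-Severi scheme over $U$ of relative dimension $[K:k]-1$ containing $W_i$ as a dense open subset, so $W$ is birational to $\bar W$. The central technical point is the identification $[\bar W_i] = [V_i]$ in $\Br U$; this follows by functoriality of the connecting map applied to the commutative ladder from the torus sequence above to $1 \to \Gm \to \GL_{[K:k]} \to \PGL_{[K:k]} \to 1$ induced by the embedding $\Res{K}{k}\Gm \hookrightarrow \GL_{[K:k]}$.

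It remains to prove that $V$ and $\bar W$ are stably birational. Consider $Z := V \times_U \bar W$. Pulling back $\bar W_i \to U$ along the projection $V \to U$ produces a Brauer-Severi scheme over $V$ with class $[\bar W_i]|_V = [V_i]|_V$; but $V \to U$ factors through $V_i$, and the diagonal section of $V_i \times_U V_i \to V_i$ shows $[V_i]|_{V_i} = 0$, hence $[V_i]|_V = 0$. By exactness of \eqref{eqn:Brauer-Severi}, $V \times_U \bar W_i \to V$ is the projectivization of a rank $[K:k]$ vector bundle, hence Zariski-locally trivial, so $Z \to V$ is birational to $V \times (\PP_k^{[K:k]-1})^r$. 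The symmetric argument, using $[V_i]|_{\bar W_i}=[\bar W_i]|_{\bar W_i}=0$, shows that $Z \to \bar W$ is birational to $\bar W \times \prod_{i=1}^r \PP_k^{n_i}$, where $n_i$ is the relative dimension of $V_i \to U$. Comparing these two birational descriptions of $Z$ yields the stable birational equivalence between $V$ and $\bar W$, and hence between $V$ and $W$.
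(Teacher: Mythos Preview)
Your proof is correct. Both your argument and the paper's follow the same high-level strategy: form the fibre product of $V$ and (a model of) $W$ over $U$ and show that each projection trivialises the other factor, yielding a stable birational equivalence. The difference lies in how this trivialisation is established.

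The paper works directly at the generic point $\eta$ of $U$ and uses Lemma~\ref{lem:specialisation} together with the rationality of $S$: since $V_\eta$ has a tautological $k(V_\eta)$-point, Lemma~\ref{lem:specialisation} gives a $k(V_\eta)$-point on $W_\eta$; as $W_\eta$ is a torsor under the rational torus $S_{k(U)}$, it becomes rational after this base change, so $W_\eta \times_{k(U)} V_\eta$ is birational to $\PP^w \times V_\eta$. A symmetric argument handles the other projection. This is shorter and makes direct use of the specialisation lemma already in hand.

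Your route instead constructs an explicit partial compactification $\bar W = \prod_U \bar W_i$ by pushing each $T_i$-torsor $W_i$ along the natural embedding $T_i \hookrightarrow \PGL_{[K:k]}$, identifies $[\bar W_i] = [V_i]$ via the commutative ladder of central extensions, and then runs a clean Amitsur-type argument on Brauer--Severi schemes. This avoids invoking Lemma~\ref{lem:specialisation} and gives a concrete Brauer--Severi model of $W$, at the cost of more setup. Either approach is perfectly adequate here; yours has the mild advantage of making the link to Amitsur's classical theorem transparent.
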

\begin{proof}
	Let $\eta$ denote the generic point of $U$.	As $V_\eta$ has a $k(V_\eta)$-point, 
	Lemma \ref{lem:specialisation} implies
	that $W_\eta$ has a $k(V_\eta)$-point. However $W_\eta$ is a $k(U)$-torsor under $S_{k(U)}$,
	hence, since $S_{k(U)}$ is rational, we see that $W_\eta \times_{k(U)} k(V_\eta)$
	is birational to $\PP^w \times_{k(U)} k(V_\eta)$ for some $w \in \NN$. 
	This implies that $W_\eta \times_{k(U)} V_\eta$ is birational
	to  $\PP^w\times_{k(U)} V_\eta$. As $V_\eta$ is a product of Brauer-Severi varieties, we may apply 
	a similar argument to deduce that $W_\eta \times_{k(U)} V_\eta$ is birational to $W_\eta \times_{k(U)} \PP^v$ for
	some $v \in \NN$, which completes the proof.
\end{proof}

We now specialise to the case where $U$ is a principal homogeneous
space under some algebraic torus over $k$ (keeping all previous notations).

\begin{lemma} \label{lem:W_is_PHS}
	$W$ is a principal homogeneous space under some algebraic torus.
\end{lemma}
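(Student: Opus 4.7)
The plan is to realise $W$ as a torsor under an explicitly constructed extension of $T$ by $S$, where $T$ denotes the algebraic torus such that $U$ is a $T$-torsor. Concretely, I will produce an algebraic $k$-torus $G$ fitting into a short exact sequence
$$1 \to S \to G \to T \to 1$$
and show that the natural action of $G$ on $W$ exhibits $W$ as a $G$-torsor. Geometrically, $G$ should be thought of as the group of $S$-equivariant automorphisms of $W$ lying above translations of $U$ by $T$.

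First I would define $G$ as the $k$-group functor whose $A$-points, for a $k$-algebra $A$, consist of pairs $(t,\varphi)$ where $t \in T(A)$ and $\varphi : W_A \to W_A$ is an $S_A$-equivariant automorphism covering the translation by $t$ on $U_A$. The projection $(t,\varphi) \mapsto t$ has kernel the group of $S_A$-equivariant $U_A$-automorphisms of $W_A$, which is exactly $S(A)$ since $W_A \to U_A$ is an $S_A$-torsor. Representability of $G$ and its smoothness follow from the \'{e}tale-local triviality of the torsor $W \to U$: over an \'{e}tale trivialising cover, an element of $G$ is described by $t \in T(A)$ together with matching elements of $S$ on each open, and these data glue by descent. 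Surjectivity of $G \to T$ follows, after base change to $\kbar$, by choosing (étale) local trivialisations of $W_{\kbar}$ over $U_{\kbar}$ and lifting a $T_{\kbar}$-translation to an $S_{\kbar}$-equivariant automorphism of $W_{\kbar}$.

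Next I would verify that $G$ is indeed an algebraic torus. Once representability is known, the exact sequence $1 \to S \to G \to T \to 1$ displays $G$ as a commutative extension of one torus by another. After base change to $\kbar$, such an extension splits since $\Ext^1_{\ZZ}(\hat{T}, \hat{S})=0$ for the (free) character lattices. Hence $G_{\kbar} \cong S_{\kbar} \times T_{\kbar}$, which shows that $G$ is a $k$-form of a torus and therefore itself a torus. Finally, $G$ acts on $W$ by $(t,\varphi)\cdot w = \varphi(w)$; this action is simply transitive on $\kbar$-points because $T$ acts simply transitively on $U$ and $S$ acts simply transitively on each fibre of $W \to U$, so $W$ is a $G$-torsor, as required.

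The main obstacle I expect is the verification of representability of $G$ and of the surjectivity of $G \to T$; both rest on a careful descent argument using the \'{e}tale local triviality of $W \to U$. Once this technical point is in place, the rest of the argument is essentially formal.
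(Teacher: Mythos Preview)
Your overall strategy—build an extension $1 \to S \to G \to T \to 1$ and exhibit $W$ as a $G$-torsor—is natural, but the construction of $G$ has a genuine gap. You claim that the kernel of $(t,\varphi) \mapsto t$ is $S(A)$. In fact the $S_A$-equivariant automorphisms of an $S_A$-torsor $W_A \to U_A$ are given by $S(U_A) = \Hom_A(U_A, S_A)$, not $S(A)$. Over $\kbar$, with $U_{\kbar} \cong \Gm^n$ and $S_{\kbar} \cong \Gm^m$, one has $\Hom(\Gm^n,\Gm^m) \cong (\kbar^*)^m \times \ZZ^{nm}$: besides translations by $S$, there are automorphisms coming from characters $U \to S$. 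Worse, over $A = k[\epsilon]/(\epsilon^2)$ the units of $A[t,t^{-1}]$ have infinite-dimensional tangent space, so your functor $G$ is not representable by a group scheme of finite type. Thus neither the exact sequence $1 \to S \to G \to T \to 1$ nor the subsequent $\Ext^1$ splitting argument is available as stated.

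The paper avoids this by a different route: it invokes \cite[Thm.~5.6]{CT08} to put a group structure directly on $\overline{W}$ sitting in $1 \to \overline{S} \to \overline{W} \to \overline{U} \to 1$, observes that $\overline{W}$ is connected solvable with no unipotent elements (hence a torus by Borel's structure theorem), and then descends via \cite[Lem.~2.4.4]{Sko01}. This sidesteps the need to single out the ``constant'' automorphisms inside your too-large $G$. If you want to salvage your approach, you would need to identify intrinsically the neutral component of your automorphism functor and prove it is representable by a torus over $k$—but doing this carefully is essentially the content of the theorem the paper cites.
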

\begin{proof}
	As $\overline{U}$ and $\overline{S}$  are algebraic tori and $W$ is a $U$-torsor under $S$,
	it follows from \cite[Thm.~5.6]{CT08} that $\overline{W}$ admits the structure of an
    algebraic group together with a short exact sequence
	\begin{equation}	\label{eqn:Is_solvable}
		1 \to \overline{S} \to \overline{W} \to \overline{U} \to 1.
	\end{equation}
	By (\ref{eqn:Is_solvable}), we see that $\overline{W}$ is
	solvable and contains no non-trivial unipotent elements.
	Therefore by the structure theorem for solvable
	algebraic groups \cite[Thm.~III.10.6]{Bor91}, we see that
	$\overline{W}$ is a torus.
	The fact that this implies that $W$ is itself a principal homogeneous space
	under some algebraic torus is well-known, see e.g.~\cite[Lem.~2.4.4]{Sko01}.
\end{proof}

Now let $k$ be a number field. It follows from \cite{San81}
and Lemma \ref{lem:W_is_PHS} that the Brauer-Manin obstruction is the only obstruction to the Hasse principle
and weak approximation for any smooth proper model of $W$.
We deduce the same for $V$, on applying Lemma \ref{lem:stably_birational} 
and a minor modification of \cite[Prop.~6.1]{CTPS13}
($\Brnr W / \Br_0 W$ is finite
as $W$ is geometrically rational \cite[Prop.~1.3.1]{CTSD94}).
This completes the proof of Theorem \ref{thm:Brauer_Manin}. \qed

\subsection{Subordinate Brauer group elements} \label{Sec:subordinate}
In the appendix of \cite[Ch.~II]{Ser97}, Serre defined the notion
of subordinate Brauer group elements for $\PP^1$. In this section,
we consider generalisations of this to other varieties.
Such Brauer group elements naturally arise in the calculation
of the leading constant in Theorem \ref{thm:Brauer}, and also assist
in the calculation of the unramified Brauer group of products of Brauer-Severi schemes.

\begin{definition}
	Let $X$ be a smooth proper variety over a field $k$ of characteristic zero and let $\br \subset \Br k(X)$
	be a finite subset. Then we say that $b \in \Br k(X)$ is \emph{subordinate} to $\br$
	with respect to $X$, if for each $D \in X^{(1)}$ the residue $\res_D(b)$ lies in 
	$\langle \res_D(\br) \rangle$. Let
	$$\Sub(X,\br)=\{ b \in \Br k(X) : \res_D(b) \in \langle \res_D(\br) \rangle \text{ for all } D \in X^{(1)}\},$$
	denote the group of all such elements.
\end{definition}
By (\ref{seq:purity0}) this is a subgroup of $\Br U$
for any open subset $U \subset X$ for which $\br \subset \Br U$.
Note that $\Br X = \Sub(X,0)$
is a subgroup of $\Sub(X,\br)$ of finite index, as $\br$ is finite.
It is important to note that $\Sub(X,\br)$ depends on the choice of model for $k(X)$ in general,
as the next example shows.

\begin{example} \label{exa:sub_bad}
	Consider $\mathbb{A}^2 \subset \PP^2$ and $\mathbb{A}^2 \subset \PP^1 \times \PP^1$
	embedded as a usual affine patch with coordinate functions $x$ and $y$.
	Suppose that $k$ contains a non-square $\alpha \in k^*$.
	Let $b \in \Br k(x,y)$ be the class of the quaternion algebra $(xy,\alpha)$.
	A simple residue calculation shows that 
    $(x,\alpha)$ is subordinate to $b$ with respect to $\PP^1 \times \PP^1$,
	but not with respect to $\PP^2$. In particular $\Sub(\PP^2,b) \neq \Sub(\PP^1 \times \PP^1,b)$.
 	In fact one can show that
	$$ \Sub(\PP^2,b) = \langle \Br k, (xy,\alpha) \rangle, \quad 
	\Sub(\PP^1 \times \PP^1,b) = \langle \Br k, (x,\alpha), (y,\alpha) \rangle.$$
\end{example}

We therefore also consider another definition which does not depend on the  model.
\begin{definition}
	Let $U$ be a smooth variety
	over $k$ and let $\br \subset \Br k(U)$ be a finite subset.
	We say that an element $b \in \Br k(U)$ is \emph{subordinate} to $\br$ with respect to $k(U)/k$,
	if for each discrete valuation $v$ of $k(U)$ which is trivial on $k$ the residue
	$\res_v(b)$ lies in $\langle \res_v(\br) \rangle$.	
	We let $\Sub(k(U)/k,\br)$ denote the group of all such elements.
\end{definition}
If $k$ is clear, we shall denote this by $\Sub(k(U),\br)$.
If $U=X$ is proper, then there is an inclusion $\Sub(k(X),\br) \subset \Sub(X,\br)$ and
$$\Brnr X = \Sub(k(X),0)=\Sub(X,0)=\Br X,$$
but however $\Sub(k(X),\br) \neq \Sub(X,\br)$ in general, as Example \ref{exa:sub_bad} shows.

As $\PP^1$ is the unique smooth projective variety
with function field $k(t)$, we see that for all finite subsets 
$\br \subset \Br k(t)$ we have $\Sub(k(t),\br)=\Sub(\PP^1,\br)$.
Colliot-Th\'{e}l\`{e}ne and Swinnerton-Dyer \cite[Thm.~2.2.1]{CTSD94}
considered groups of this type in the case of $\PP^1$.
If $V_1,\ldots,V_r$ are Brauer-Severi varieties over $k(t)$, then they showed that
(in our notation) there is a short exact sequence
$$ 0 \to \langle \br \rangle \to \Sub(k(t),\br) \to \Brnr(k(V)/k) \to 0,$$
where $\br=\{[V_1],\ldots,[V_r]\} \subset \Br k(t)$ and $V=V_1 \times_{k(t)} \cdots \times_{k(t)} V_r$.
Note that $\Brnr(k(V)/k)$ is isomorphic to the Brauer group of any smooth proper
model of $V$ over $k$. The following theorem is a generalisation of this result.

\begin{theorem} \label{thm:CTSD}
	Let $U$ be a smooth variety over a field $k$ of characteristic $0$
	and let $V_1,\ldots,V_r$ be a Brauer-Severi varieties over $k(U)$.
	Let $\br=\{[V_1],\ldots,[V_r]\} \subset \Br k(U)$ and 
	$V=V_1 \times_{k(U)} \cdots \times_{k(U)} V_r$. Then the composed morphism
	$$\Sub(k(U),\br) \subset \Br k(U) \to
	\Br V \subset \Br k(U)(V)=\Br k(V)$$
	has image $\Brnr(k(V)/k)$ and induces a short exact sequence
	\begin{equation} \label{eqn:CTSD}
		0 \to \langle \br \rangle \to \Sub(k(U),\br) \to \Brnr(k(V)/k) \to 0.
	\end{equation}
\end{theorem}
\begin{proof}
	Our proof is based on the proof of \cite[Thm.~2.2.1]{CTSD94}.
	Let $b_i = [V_i]$. A minor modification of a classical
	theorem of Amitsur (see e.g.~\cite[Thm.~5.4.1]{GS06})
	yields a short exact sequence
	\begin{equation} \label{eqn:Brauer_group_BS}
		0 \to \langle \br \rangle \to \Br k(U) \stackrel{\gamma}{\to} \Br V \to 0,
	\end{equation}
	where $\gamma$ denotes the usual restriction map.
	This shows that the sequence (\ref{eqn:CTSD}) is exact on the left and in the middle.
	Fix some $b \in \Sub(k(U),\br)$. We now show that $\gamma(b) \in \Brnr(k(V)/k)$.
	For all discrete valuation rings $k \subset R$ with field of fractions
	$k(U)$ and each $i=1,\ldots,r$, there exists some $m_{i,R} \in \ZZ$ such that
	\begin{equation} \label{eqn:res_R}
		\res_R(b) = \sum_{i=1}^{r}m_{i,R}\res_R(b_i).
	\end{equation}
	Let $k \subset R'$ be a discrete valuation ring with field of fractions
	$k(V)$. If $k(U) \subset R'$, then $\res_{R'}(\gamma(b))=0$.
	Otherwise $R=k(U) \cap R'$ is a discrete valuation ring containing $k$
	with field of fractions $k(U)$. Let $e(R'/R)$ denote the ramification degree of $R'$ over $R$.
	As $\gamma(b_i)=0$ for each $i=1,\ldots,r$ by (\ref{eqn:Brauer_group_BS}), we have
	$$\res_{R'}(\gamma(b)) = \res_{R'}(\gamma(b)) - \sum_{i=1}^{r}m_{i,R}\res_{R'}(\gamma(b_i)) =
	e(R'/R)\res_{R}\left(b - \sum_{i=1}^{r}m_{i,R}b_i\right)=0,$$
	on using \cite[Prop.~1.1.1]{CTSD94} followed by (\ref{eqn:res_R}).
	Hence $\gamma(b) \in \Brnr(k(V)/k)$.

	To finish the proof, it suffices to show that
	$\Sub(k(U),\br) \to \Brnr(k(V)/k)$ is surjective. Note that
	(\ref{eqn:Brauer_group_BS}) shows that every element of $\Brnr(k(V)/k)$
	has the form $\gamma(b)$ for some $b \in \Br k(U)$. So let $b \in \Br k(U)$ and suppose
	that $b \not \in \Sub(k(U),\br)$, i.e.~there exists a discrete valuation ring
    $k \subset R$ with field of fractions $k(U)$ such that
	$\res_R (b)$ does not lie in $\langle \res_R(\br) \rangle$.
	In order to show that $\gamma(b)$ does not lie in $\Brnr(k(V)/k)$,
	it suffices to construct a discrete valuation ring $k \subset R'$
	with field of fractions $k(V)$ such that $\res_{R'}(\gamma(b)) \neq 0$.
	To do this, we shall use a model $\mathcal{V} \to \Spec R$ for $V$
	from Lemma \ref{lem:AF}. Let $D$ denote one of the irreducible components
	of the special fibre of $\mathcal{V}$ and denote by $R'$ the corresponding discrete
	valuation ring. Let $v$ (resp.~$v'$) denote the discrete valuation on $R$ (resp.~$R'$).
	Applying the residue maps to the exact sequence (\ref{eqn:Brauer_group_BS}) we
	obtain a commutative diagram
	$$
	\xymatrix{	
	0 \ar[r]& \langle \br \rangle \ar[r] \ar[d]^{\res_R} & \Br k(U) \ar[r]^{\gamma}	 \ar[d]^{\res_R}	
    & \Br V \ar[r] \ar[d]^{\res_{R'}} & 0 \\
	0 \ar[r]& \langle \res_{R}(\br) \rangle \ar[r] & \HH^1(k(v),\QQ/\ZZ) \ar[r]&  \HH^1(k(v'),\QQ/\ZZ) &
	}
	$$
	with exact rows. Here the map $\HH^1(k(v),\QQ/\ZZ) \to  \HH^1(k(v'),\QQ/\ZZ)$ is the natural restriction
	map as $R \subset R'$ has ramification index one, since $D$ is integral (this follows from \cite[Prop.~1.1.1]{CTSD94}).
	The exactness of the bottom row	follows from a simple application of the inflation-restriction exact sequence,
	 on noting that by Lemma \ref{lem:AF},
	the algebraic closure of $k(v)$ in $k(v')$ is exactly the compositum of the field extensions given by the $\res_R(b_i)$.
	By assumption $\res_R (b)$ does not lie in $\langle \res_R(\br) \rangle$,
	hence we see that $\res_{R'}(\gamma(b)) \neq 0$. This completes the proof.
\end{proof}

Next recall the following theorem of Harari \cite[Thm.~2.1.1]{Har94}.

\begin{theorem}[Harari] \label{thm:Harari}
    Let $U$ be a smooth quasi-projective  variety over a number field $F$ and let $b \in \Br U$.
    Then $b \in \Brnr U$ if and only if the map $U(F_v) \to \Br F_v$
    induced by $b$ is trivial for all but finitely many places $v$ of $F$.
\end{theorem}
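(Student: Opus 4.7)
The proof splits into two directions, with the first essentially formal. For the easy direction, suppose $b \in \Brnr U$; then $b$ extends to an element of $\Br X$ for some smooth proper compactification $X \supset U$ over $F$, and further spreads out to a smooth proper integral model $\mathcal{X}$ over $\OO_{F,S}$ for some finite set of places $S$. For $v \notin S$ and $u \in U(F_v) \subset X(F_v)$, the valuative criterion of properness extends $u$ to a morphism $\Spec \OO_v \to \mathcal{X}$, so $b(u) \in \Br F_v$ comes from $\Br \OO_v = 0$ (as $\OO_v$ is a henselian DVR with finite residue field). Hence $b(u) = 0$ for all $v \notin S$.

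For the harder direction, suppose $b \notin \Brnr U$. Then there exists a divisorial valuation on $F(U)$, trivial on $F$, at which the residue of $b$ is non-zero. Choosing a smooth projective compactification $X \supset U$ and shrinking if necessary, we may assume this valuation corresponds to a prime divisor $D \in X^{(1)}$ contained in $U$. Let $\chi = \res_D(b) \in H^1(F(D), \QQ/\ZZ)$, a non-trivial character cutting out a cyclic extension $L/F(D)$ of degree $n > 1$. Spread $X \supset D$ out to an integral model $\mathcal{X} \supset \mathcal{D}$ over $\OO_{F,S}$, smooth in a neighbourhood of $\mathcal{D}$, and extend the cyclic cover defined by $\chi$ to a cyclic \'etale cover $\mathcal{L} \to \mathcal{D}_0$ of degree $n$ over some dense open $\mathcal{D}_0 \subset \mathcal{D}$.

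The central input is the local evaluation formula: for $w \notin S$ and $u \in U(F_w)$ whose reduction $\tilde u$ is a smooth $\FF_w$-point of $\mathcal{D}_0$ at which $u$ meets $\mathcal{D}$ transversally, one has
$$\inv_w(b(u)) \equiv \tfrac{1}{n}\, \chi(\mathrm{Frob}_{\tilde u}) \pmod{\ZZ},$$
where $\mathrm{Frob}_{\tilde u}$ denotes the Frobenius of $\kappa(\tilde u)$, acting on $\chi$ via the map from the decomposition group at $\tilde u$ into $G_{F(D)}$. This follows from Grothendieck's purity theorem applied to the two-dimensional regular local ring $\OO_{\mathcal{X}, \tilde u}$: writing $b = b_0 + \chi \cup (\pi_D)$ with $b_0 \in \Br \OO_{\mathcal{X}, \tilde u}$ and $\pi_D$ a local equation for $\mathcal{D}$, evaluation at $u$ gives $b_0(u) = 0$ together with the cyclic algebra $\chi \cup (\pi_D(u))$ whose invariant is as above.

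Given the formula, apply Chebotarev's density theorem to $\mathcal{L} \to \mathcal{D}_0$: infinitely many closed points $\tilde x \in \mathcal{D}_0$ have Frobenius not killed by $\chi$. Each such $\tilde x$ lies over some finite place $w \notin S$ of $F$; by Hensel's lemma, $\tilde x$ lifts to an $\OO_w$-point of $\mathcal{X}$, which may then be perturbed along a transverse direction to $\mathcal{D}$ to produce a point $u \in U(F_w)$ satisfying the hypotheses of the formula. Hence $\inv_w(b(u)) \neq 0$ at infinitely many $w$, contradicting the hypothesis on $b$. The main obstacle is the rigorous derivation of the local evaluation formula and the management of degenerate loci on $\mathcal{X}$ (singularities of $\mathcal{D}$ and fibres of bad reduction), both of which are handled by passing to the smooth open $\mathcal{D}_0$ and enlarging $S$.
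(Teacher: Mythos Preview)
The paper does not prove this result; it merely recalls it with a citation to Harari \cite[Thm.~2.1.1]{Har94}. Your sketch follows the strategy of Harari's original proof (the so-called ``formal lemma''): spread out to an integral model, use the residue along a boundary divisor $D$ to control the local invariant of $b$ at points specialising to $D$, and invoke a Chebotarev-type density argument to produce infinitely many bad places.

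Two points deserve correction. First, you write that $D$ may be taken ``contained in $U$''. This is impossible: since $b \in \Br U$ and $U$ is non-singular, purity forces $\res_D(b)=0$ for every $D \in U^{(1)}$. The divisor along which $b$ is ramified necessarily lies in the boundary $X \setminus U$ of a smooth compactification. Your later construction (lifting a smooth point of $\mathcal{D}$ and perturbing transversally to land in $U$) is consistent with $D \subset X \setminus U$, so this appears to be a slip rather than a structural error. Second, the local ring $\OO_{\mathcal{X},\tilde u}$ at a closed point of a special fibre has dimension $\dim X + 1$, not $2$; the decomposition $b = b_0 + (\chi,\pi_D)$ you want is obtained from purity for the discrete valuation ring $\OO_{X,D}$ (or its henselisation), after which one must separately argue that the unramified piece $b_0$ contributes trivially for $v \notin S$. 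This is where enlarging $S$ and the spreading-out of $b_0$ over an open of $\mathcal{D}$ are genuinely used.
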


It is the ``if'' part of this theorem which is non-trivial; 
the ``only if'' part is classical, as explained in Section \ref{sec:Br_fields}.
We now give a generalisation of Harari's theorem
to subordinate Brauer group elements, which makes clear their relevance to the study of
zero-loci of Brauer group elements.

\begin{theorem}	\label{thm:Sub_Harari}
	Let $U$ be a smooth quasi-projective variety over a number field $F$, let
	$\br \subset \Br U$ be a finite subset
	and let $b \in \Br U$.
    Then $b \in \Sub(F(U),\br)$ if and only if
	the map $U(F_v)_{\br} \to \Br F_v$ induced by $b$ is
	trivial for all but finitely many places $v$ of $F$.
\end{theorem}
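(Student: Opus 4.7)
The plan is to deduce the theorem from Harari's Theorem \ref{thm:Harari} applied to a product of Brauer--Severi schemes built from $\br$, using Theorem \ref{thm:CTSD} as the bridge. The crucial point is that the local zero-locus $U(F_v)_\br$ admits a clean geometric interpretation as the image of such a product under the projection to $U$.

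Writing $\br = \{b_1, \ldots, b_r\}$, I would first invoke Gabber's theorem recalled in \S\ref{sec:Brauer-Severi_schemes} to choose Brauer--Severi schemes $\pi_i : \mathcal{V}_i \to U$ with $[\mathcal{V}_i] = b_i$, and form the fibre product $\pi : \mathcal{V} = \mathcal{V}_1 \times_U \cdots \times_U \mathcal{V}_r \to U$. Since each $\mathcal{V}_i \to U$ is smooth and each $\mathcal{V}_i$ is geometrically integral, $\mathcal{V}$ is a non-singular variety over $F$, and its function field agrees with $F(V)$ for the generic fibre $V = V_1 \times_{F(U)} \cdots \times_{F(U)} V_r$ appearing in Theorem \ref{thm:CTSD}.

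The next step is to establish the equality $\pi(\mathcal{V}(F_v)) = U(F_v)_\br$ for every place $v$. This follows because the fibre $\pi^{-1}(x_v)$ over $x_v \in U(F_v)$ is a product of Brauer--Severi varieties over $F_v$ with classes $b_i(x_v)$, and such a product admits an $F_v$-point if and only if each $b_i(x_v)$ vanishes. Combined with the functoriality identity $\gamma(b)(y_v) = b(\pi(y_v))$ for $y_v \in \mathcal{V}(F_v)$, where $\gamma : \Br U \to \Br \mathcal{V}$ denotes the pullback, this shows that the evaluation of $b$ on $U(F_v)_\br$ is trivial if and only if the evaluation of $\gamma(b)$ on $\mathcal{V}(F_v)$ is trivial.

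Harari's Theorem \ref{thm:Harari} applied to $\mathcal{V}$ and $\gamma(b) \in \Br \mathcal{V}$ then asserts that this latter triviality holds for all but finitely many $v$ if and only if $\gamma(b) \in \Brnr(F(V)/F)$, while Theorem \ref{thm:CTSD} translates this in turn into the condition $b \in \Sub(F(U), \br)$. Concatenating these three equivalences yields the theorem. I do not foresee a serious obstacle here: the argument is essentially a clean assembly of the two cited results, with the only genuinely new ingredient being the geometric identification $\pi(\mathcal{V}(F_v)) = U(F_v)_\br$, which follows at once from the splitting criterion for products of Brauer--Severi varieties.
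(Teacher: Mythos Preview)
Your proposal is correct and follows essentially the same approach as the paper: build the product of Brauer--Severi schemes $\mathcal{V}\to U$ from $\br$, use the surjection $\mathcal{V}(F_v)\to U(F_v)_\br$ together with functoriality to identify triviality of $b$ on $U(F_v)_\br$ with triviality of $\pi^*b$ on $\mathcal{V}(F_v)$, then apply Harari's theorem and Theorem~\ref{thm:CTSD}. The only cosmetic difference is that you spell out explicitly why $\mathcal{V}$ is a non-singular variety and why $\pi(\mathcal{V}(F_v))=U(F_v)_\br$, which the paper leaves implicit.
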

\begin{proof}
	Let $\pi:V \to U$ be the product of Brauer-Severi schemes corresponding to the elements of $\br$.
	For any place $v$ of $F$ we have a commutative diagram
	$$
	\xymatrix{V(F_v)  \ar[d] \ar[r]& \Br F_v \\
	U(F_v)_\br \ar[ur]& }
	$$
	given by pairing with $\pi^*b$ and $b$, respectively. Note that,
	by definition, the map $V(F_v) \to U(F_v)_\br$ is surjective.
	By Theorem \ref{thm:Harari} we have
	$\pi^*b \in \Brnr(F(V)/F)$ if and only if the evaluation map $V(F_v) \to \Br F_v$
    induced by $\pi^*b$ is trivial for all but finitely many places $v$ of $F$.
	However by Theorem \ref{thm:CTSD}, we also know that $\pi^*b \in \Brnr(F(V)/F)$
	if and only if $b \in \Sub(F(U),\br)$. This proves the required equivalence.
\end{proof}

We next show that under suitable conditions,
Brauer group elements which are subordinate to \emph{algebraic}
Brauer group elements are themselves algebraic.
\begin{lemma}\label{lem:sub_algebraic}
	Let $X$ be a smooth proper variety over a field $k$ of characteristic zero and 
	let $U \subset X$ be a dense open subset.
	Let $\br \subset \Br_1 U$ be a finite subset and suppose that $\Br \overline{X} = 0$. Then
	$\Sub(X,\br) \subset \Br_1 U$.
\end{lemma}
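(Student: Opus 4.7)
The plan is to use purity for $\overline{U} \subset \overline{X}$ combined with the functoriality of residue maps under the base change $k \to \kbar$. As noted immediately after the definition of $\Sub(X,\br)$, the inclusion $\Sub(X,\br) \subset \Br U$ is automatic from \eqref{seq:purity0}, since $\res_D(\br) = 0$ for every $D \in U^{(1)}$. It therefore suffices to show that for each $b \in \Sub(X,\br)$ the image of $b$ in $\Br \overline{U}$ vanishes.

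Since $\Br \overline{X} = 0$ by hypothesis, the sequence \eqref{seq:purity0} applied over $\kbar$ gives an injection
$$\Br \overline{U} \hookrightarrow \bigoplus_{\overline{D} \in (\overline{X}\setminus\overline{U})^{(1)}} H^1(\kbar(\overline{D}),\QQ/\ZZ),$$
so the problem reduces to showing $\res_{\overline{D}}(b|_{\overline{U}}) = 0$ for every $\overline{D}$ in the boundary. Each such $\overline{D}$ lies over a unique $D \in (X\setminus U)^{(1)}$; because the base change $k \to \kbar$ is unramified, functoriality of residues identifies $\res_{\overline{D}}(b|_{\overline{U}})$ with the image of $\res_D(b)$ under the natural restriction map $H^1(k(D),\QQ/\ZZ) \to H^1(\kbar(\overline{D}),\QQ/\ZZ)$.

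The final step exploits the hypothesis that $\br \subset \Br_1 U$. By the second purity sequence \eqref{seq:purity}, each $\res_D(b')$ with $b' \in \br$ lies in the subgroup $H^1(k_D,\QQ/\ZZ) \subset H^1(k(D),\QQ/\ZZ)$, where $k_D = \kbar \cap k(D)$. By the definition of subordinate, $\res_D(b) \in \langle \res_D(\br) \rangle$, and this group is therefore contained in $H^1(k_D,\QQ/\ZZ)$. But $k_D \subset \kbar$, so the restriction $H^1(k_D,\QQ/\ZZ) \to H^1(\kbar(\overline{D}),\QQ/\ZZ)$ factors through $H^1(\kbar,\QQ/\ZZ) = 0$. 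Thus all residues of $b|_{\overline{U}}$ vanish, and so $b|_{\overline{U}} = 0$, i.e.~$b \in \Br_1 U$.

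The only point that needs a moment of care is the functoriality of the residue map in the vertical direction, i.e.~the identification $\res_{\overline{D}}(b|_{\overline{U}}) = \res_D(b)|_{\kbar(\overline{D})}$, which relies on the fact that the discrete valuation ring attached to $D$ behaves well under the unramified extension $k \subset \kbar$; this is standard (cf.~\cite[Prop.~1.1.1]{CTSD94}) and is the only non-formal ingredient in the argument.
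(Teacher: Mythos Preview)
Your proof is correct and follows essentially the same approach as the paper's: both argue that the residues of $b$ over $\kbar$ vanish (since they lie in the group generated by the residues of $\br$, which die over $\kbar$ as $\br \subset \Br_1 U$), and then invoke purity together with $\Br \overline{X}=0$. The paper compresses your steps~5--7 into the single observation that elements of $\br$ have trivial residues over $\kbar$ because they become zero in $\Br \overline{U}$; your version via the sequence~\eqref{seq:purity} and the factorisation through $H^1(\kbar,\QQ/\ZZ)=0$ simply unpacks this.
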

\begin{proof}
	Let $b \in \Sub(X,\br)$. As $\br \subset \Br_1 U$, each element of $\br$ has trivial residues over $\kbar$,
	so the same holds for $\overline{b} = b \otimes_k \kbar$ (this follows from \cite[Prop.~1.1.1]{CTSD94}, for example). Hence by purity $\overline{b} \in \Br \overline{X}=0$,
	as required.
\end{proof}

We finish by giving some necessary conditions for the finiteness of $\Sub(X,\br)/\Br_0 X$.

\begin{lemma}\label{lem:Sub_is_finite}
	Let $X$ be a smooth proper geometrically rational variety over a field $k$ of characteristic zero.
	Let $\br \subset \Br k(X)$ be finite. Then $\Sub(X,\br)/\Br_0 X$ is finite.
\end{lemma}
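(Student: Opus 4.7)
The plan is to filter $\Sub(X,\br)/\Br_0 X$ by the inclusions $\Br_0 X \subset \Br X \subset \Sub(X,\br)$ and prove finiteness of each subquotient.

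For $\Sub(X,\br)/\Br X$, finiteness of $\br$ leaves only finitely many divisors $D_1,\ldots,D_s \in X^{(1)}$ at which some element of $\br$ ramifies. Setting $U = X \setminus (D_1 \cup \cdots \cup D_s)$, the definition of $\Sub(X,\br)$ forces every element to have trivial residue outside $\{D_1,\ldots,D_s\}$, so by purity (\ref{seq:purity0}) we obtain $\Sub(X,\br) \subset \Br U$; taking residues at the $D_i$ then produces an injection
$$\Sub(X,\br)/\Br X \hookrightarrow \bigoplus_{i=1}^{s} \langle \res_{D_i}(\br) \rangle$$
into a finite abelian group.

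For $\Br X/\Br_0 X$, geometric rationality of $X$ combined with the birational invariance of the Brauer group of smooth proper varieties gives $\Br \overline X = \Br \PP^n_{\overline{k}} = 0$, so $\Br X = \Br_1 X$; the Hochschild--Serre spectral sequence then produces an injection $\Br_1 X/\Br_0 X \hookrightarrow H^1(k, \Pic \overline X)$. Since $\overline X$ is smooth proper and rational in characteristic zero, it is simply connected and has trivial Albanese, whence $\Pic \overline X = \NS \overline X$ is finitely generated and torsion-free. Choosing a finite Galois extension $L/k$ over which the $G_k$-module $\Pic \overline X$ splits, inflation--restriction reads
$$0 \to H^1(\Gal(L/k), \Pic \overline X) \to H^1(k, \Pic \overline X) \to \Hom(G_L, \Pic \overline X),$$
whose right-hand term vanishes (a profinite group admits no non-trivial continuous homomorphism into a torsion-free discrete abelian group with trivial action) and whose left-hand term is finite.

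The main obstacle I expect is the finiteness of $H^1(k, \Pic \overline X)$ for an arbitrary characteristic-zero field $k$; it really hinges on the torsion-freeness of $\Pic \overline X$, without which $\Hom(G_L, \Pic \overline X)$ could be infinite. Geometric rationality is precisely the hypothesis that simultaneously supplies both the vanishing of $\Br \overline X$ and the torsion-freeness of $\Pic \overline X$.
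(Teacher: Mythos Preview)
Your proof is correct and follows exactly the same two-step filtration as the paper: first $\Sub(X,\br)/\Br X$ is finite because $\br$ has only finitely many residues, then $\Br X/\Br_0 X$ is finite by geometric rationality. The only difference is that the paper dispatches the second step by citing \cite[Prop.~1.3.1]{CTSD94}, whereas you have written out that argument in full (vanishing of $\Br\overline{X}$, Hochschild--Serre, torsion-freeness of $\Pic\overline{X}$, and inflation--restriction).
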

\begin{proof}
	As $\Br X \subset \Sub(X,\br)$ is a subgroup of finite index, it suffices to show that the group $\Br X/\Br_0 X$
	is finite. This follows from the assumption that $X$ is geometrically rational (see e.g.~\cite[Prop.~1.3.1]{CTSD94}).
\end{proof}
Note that if $U$ and $V$ are as in Theorem \ref{thm:CTSD} and $U$ is geometrically rational,
then Lemma \ref{lem:Sub_is_finite} implies that $\Brnr(k(V)/k)/\Br_0 V$ is finite.

\section{Hecke \texorpdfstring{$L$}{L}-functions and Delange's Tauberian theorem} \label{sec:L-functions}
In this section we gather various analytic results on virtual
$L$-functions and certain partial Euler products which shall
arise in the proof of Theorem \ref{thm:Brauer}, together with a Tauberian
theorem due to Delange \cite{Del54}.

\subsection{Hecke $L$-functions} \label{sec:Hecke}
Let $F$ be a number field. Recall (see e.g.~\cite{CF10} or \cite{Wei74})
that a Hecke character for $F$ is a character $\Adele_F^* \to S^1 \subset \CC^*$
which is trivial on $F^* \subset \Adele_F^*$. Each Hecke character $\chi$ may
be decomposed as a product of local characters $\chi_v:F_v^* \to S^1$ for each place $v$ of $F$.
We say that $\chi$ is \emph{unramified} at $v$ if the character $\chi_v$ is trivial on $\OO_v^*$.
Each Hecke character has a conductor $q(\chi) \in \NN$, which measures the ramification of $\chi$
at the finite places. The $L$-function of $\chi$ is defined to be
$$L(\chi,s) = \prod_v \left(1-\frac{\chi_v(\pi_v)}{q_v^s}\right)^{-1},$$
where the Euler product is taken over those non-archimedean places $v$ of $F$ for which $\chi_v$ is unramified.
We denote by $\zeta_F(s)=L(1,s)$ the Dedekind zeta function of $F$.

These $L$-functions admit a meromorphic continuation to $\CC$, and are holomorphic
if $\chi$ is non-principal (we say that a Hecke character
is principal if its restriction to the collection $ \Adele_F^{*1}$  of norm one ideles is trivial).
The principal characters are exactly those of the form $|| \cdot ||^{i\theta}$
for some $\theta \in \RR$, where
$$||\cdot||: \Adele_F^* \to S^1, \qquad (t_v) \mapsto \prod_{v \in \Val(F)}|t_v|_v,$$
is the adelic norm map, and $L(|| \cdot ||^{i\theta},s)$ admits a single pole
of order $1$ at $s=1+i\theta$.

We now state bounds for the growth rate of Hecke $L$-functions. Let $\chi$ be a character of $\prod_{v \mid \infty} F_v^*$
(e.g.~$\chi$ could come from a Hecke character). Let $v$ be an archimedean place of $F$.
Restricting $\chi$ to the obvious subgroup $\RR_{>0} \subset F_v^*$ (as $F_v^* \cong \RR^*$ or $\CC^*$),
we obtain a continuous homomorphism $\RR_{>0} \to S^1$. Such a homomorphism must be of the form $x \mapsto |x|^{i\kappa_v}$
for some $\kappa_v \in \RR$. We then define
\begin{equation} \label{def:character_norm}
	||\chi|| = \max_{v \mid \infty} |\kappa_v|.
\end{equation}

\begin{lemma}\label{lem:Hecke_bound}
	Let $\varepsilon >0$, let $C$ be a compact subset of the half plane $\re s \geq 1$,
	and let $\chi$ be a non-principal Hecke character of $F$. Then 
	$$L(\chi,s) \ll_{\varepsilon,C} q(\chi)^\varepsilon(1+ ||\chi||)^{\varepsilon},
	\quad (s-1)\zeta_F(s) \ll_{\varepsilon,C} 1, \quad s \in C.$$
\end{lemma}
\begin{proof}
	The first bound follows from \cite[(5.20) p.~100]{IK04}. The second is trivial.
\end{proof}
Note that the non-trivial principal Hecke characters were overlooked in the statement of \cite[Thm.~3.2.3]{BT95}.

\subsection{Analytic properties of certain partial Euler products}
\label{sec:partial_zeta_functions}
Let $F$ be a number field and fix a finite group $\R$ of Hecke characters for $F$. For each place $v$ of $F$, let
\begin{align*}
	\thorn_{v}: F_v^* &\to \{0,1\}, \quad
	\thorn_{v}: t_v \mapsto \left\{
	\begin{array}{ll}
	1, \quad \text{if } \rho_v(t_v)=1 \text{ for all } \rho \in \R,\\
	0, \quad \text{otherwise},
	\end{array}\right.
\end{align*}
be the indicator function of $\bigcap_{\rho \in \R} \ker \rho_v$.
By character orthogonality we have
\begin{equation} \label{eqn:character_orthog}
	\thorn_{v} = \frac{1}{|\R|} \sum_{\rho \in \R} \rho_v.
\end{equation}
Let $\chi$ be a Hecke character of $F$. The partial Euler product
of interest to us is
\begin{equation}\label{def:partial_zeta_function}
	L_{\R}(\chi,s)=\prod_{v} \left(1-\frac{\thorn_v(\pi_{v})\chi_{v}(\pi_{v})}{q_v^{s}}\right)^{-1}
	=\prod_{\substack{v \\ \rho_v(\pi_v) = 1  \\ \forall \rho_v \in \R}} \left(1-\frac{\chi_{v}(\pi_{v})}{q_v^{s}}\right)^{-1},
\end{equation}
where the products are taken over those non-archimedean places $v \in \Val(F)$ for
which $\chi_v$ and $\rho_v$ are unramified for all $\rho \in \R$.
Note that if $\R = \{1\}$ then we have $L_{\{1\}}(\chi,s) = L(\chi,s)$. 
For general $\R$, the right-hand Euler product in (\ref{def:partial_zeta_function})
is over a certain collection of places of density $1/|\R|$.
It is clear that the products in (\ref{def:partial_zeta_function}) are
absolutely convergent for $\re s >1$ and define a holomorphic function
without zeros on this domain. The analytic properties of functions of this 
type have been studied by numerous authors
(see e.g.~\cite{Has10}). We shall content
ourselves with the following.

\begin{lemma} \label{lem:partial_Euler_product}
    Let $\varepsilon >0$ and let $\chi$ be a Hecke character for $F$. Then there exists a function $ G(\R,\chi,s)$
	which is holomorphic, uniformly bounded with respect to $\chi$ and non-zero
	on the domain $\re s \geq 1/2 + \varepsilon$ such that
	$$L_{\R}(\chi,s)^{|\R|} =G(\R,\chi,s) \prod_{\rho \in \R} L(\rho\chi,s),$$
	for $\re s >1$.
	
	If $\chi$ is not of the form $||\cdot||^{i\theta}\rho$ for some $\theta \in \RR$ and some $\rho \in \R$, then
	$L_{\R}(\chi,s)$ admits a holomorphic continuation to the line $\re s =1$.
	If $\rho \in \R$, then $L_{\R}(\rho,s)$ admits a holomorphic continuation to the line $\re s =1$, away from $s=1$.
	Here we have
	$$L_{\R}(\rho,s) = \frac{c_{\R,\rho}}{(s-1)^{1/|\R|}} + O\left(\frac{1}{(s-1)^{1/|\R|-1}}\right),$$
	as $s \to 1$, where  $c_{\R,\rho} \neq 0$.
\end{lemma}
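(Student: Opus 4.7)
The strategy is to compare $L_{\R}(\chi,s)^{|\R|}$ with $\prod_{\rho \in \R} L(\rho\chi,s)$ through their logarithmic expansions. For each place $v$ at which $\chi$ and every $\rho \in \R$ are unramified, the fact that $\thorn_v(\pi_v) \in \{0,1\}$ gives $\thorn_v(\pi_v)^n = \thorn_v(\pi_v)$, so
$$\log L_\R(\chi,s) = \sum_{v}\sum_{n \geq 1}\frac{\thorn_v(\pi_v)\chi_v(\pi_v)^n}{n\, q_v^{ns}},$$
whereas applying character orthogonality to $\pi_v^n$ yields $\sum_{\rho \in \R}\rho_v(\pi_v)^n = |\R|\thorn_v(\pi_v^n)$, hence
$$\sum_{\rho \in \R}\log L(\rho\chi,s) = |\R|\sum_{v}\sum_{n \geq 1}\frac{\chi_v(\pi_v)^n \thorn_v(\pi_v^n)}{n\, q_v^{ns}},$$
up to a bounded contribution from the finitely many ramified places. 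The $n=1$ terms of these two expressions coincide, and the difference of the $n \geq 2$ tails is dominated termwise by $|\R|/(n q_v^{n\re s})$, which converges absolutely and uniformly on compact subsets of the half-plane $\re s \geq 1/2+\varepsilon$. Exponentiating produces the required factorisation $L_\R(\chi,s)^{|\R|} = G(\R,\chi,s)\prod_{\rho \in \R}L(\rho\chi,s)$, with $G$ holomorphic, non-vanishing, and bounded on compact subsets of this half-plane; the uniform boundedness in $\chi$ follows from $|\chi_v(\pi_v)| = 1$ together with standard bounds for the bad-prime local $L$-factors.

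To extract $L_\R(\chi,s)$ itself I would take an $|\R|$-th root of this product, legitimate on any simply-connected open set on which the right-hand side has no zeros, the branch being pinned down by the Euler product definition for $\re s > 1$. The key input is that a Hecke character of finite order is either trivial or non-principal, so the only principal element of $\R$ is the trivial character. Under the hypothesis that $\chi$ is not of the form $||\cdot||^{i\theta}\rho$ for any $\theta \in \RR$ and $\rho \in \R$, every $\rho\chi$ is non-principal; hence each $L(\rho\chi,s)$ is entire and non-vanishing on $\re s \geq 1$, and the $|\R|$-th root provides the desired holomorphic continuation of $L_\R(\chi,s)$ to $\re s = 1$.

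The remaining case is $\chi = \rho_0 \in \R$. Since $\R$ is a group, $\rho \mapsto \rho\rho_0$ permutes $\R$, so $\prod_{\rho}L(\rho\chi,s) = \zeta_F(s)\prod_{\rho \in \R,\,\rho \neq 1}L(\rho,s)$; here $\zeta_F(s)$ contributes a simple pole at $s=1$ while the remaining factors, being non-principal, are entire and non-vanishing on $\re s \geq 1$. Write $A(s) = G(\R,\rho_0,s)\prod_{\rho}L(\rho\rho_0,s) = c_0(s-1)^{-1}(1 + O(s-1))$ near $s=1$, with
$$c_0 = G(\R,\rho_0,1)\cdot\operatorname{Res}_{s=1}\zeta_F(s)\cdot\prod_{\rho \neq 1}L(\rho,1) \neq 0.$$
A binomial expansion of $A(s)^{1/|\R|}$ then gives the holomorphic continuation of $L_\R(\rho_0,s)$ to $\re s = 1 \setminus \{1\}$ together with the claimed asymptotic, where $c_{\R,\rho_0} = c_0^{1/|\R|}$. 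The principal obstacles are minor technical points: making a globally consistent branch choice of the $|\R|$-th root (resolved by matching the Euler product for $\re s > 1$) and bounding the bad Euler factors uniformly in $\chi$ when verifying the properties of $G$.
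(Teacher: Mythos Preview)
Your proof is correct and follows essentially the same approach as the paper. The paper expands each local Euler factor directly as $1 + \chi_v(\pi_v)q_v^{-s}\sum_{\rho}\rho_v(\pi_v) + O(q_v^{-1-\varepsilon})$ and invokes character orthogonality, whereas you pass to logarithms and compare the $n=1$ terms; these are two ways of phrasing the same first-order comparison of Euler products, and the paper likewise deduces the continuation from the factors $L(\rho\chi,s)^{1/|\R|}$ together with non-vanishing of Hecke $L$-functions on $\re s \geq 1$.
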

\begin{proof}
	To prove the first equality, it suffices to compare the  Euler products. For $\re s \geq 1/2 + \varepsilon$
	and unramified $v \in \Val(F)$, the Euler factor at $v$ in the product of $L$-functions on the right-hand side is
	\begin{align*}
		\prod_{\rho \in \R}\left(1 - \frac{\rho_v(\pi_v)\chi_v(\pi_v)}{q_v^{s}}\right)^{-1}&=
		 1 + \frac{\chi_v(\pi_v)}{q_v^s} \sum_{\rho \in \R}\rho_v(\pi_v) + O(q_v^{-1-\varepsilon}).
	\end{align*}
	The first part therefore follows by (\ref{eqn:character_orthog}). The second part
	of the lemma follows from the analytic properties of the functions 
	$L(\rho \chi,s)^{1/|\R|}$, which one obtains from the analytic properties 
	of Hecke $L$-functions stated in \S \ref{sec:Hecke}, together with the fact that Hecke $L$-functions
	do not vanish for $\re s \geq 1$ \cite[Thm.~5.10]{IK04}  (the reader who is unfamiliar with rational powers
	of $L$-functions is advised to consult \cite[Ch.~II.5.1]{Ten95}).
\end{proof}

\subsection{Delange's Tauberian theorem}
We now state the version of Delange's Tauberian theorem which shall 
be used in the proof of Theorem \ref{thm:Brauer}. 

\begin{theorem} \label{thm:Delange}
    Let $f(s) = \sum_{n=1}^\infty a_n/n^s$ be a Dirichlet series with real non-negative coefficients
    which converges for $\re s > 1$. Suppose that there exists some real number
	$\omega >0$ and some $\delta >0$ such that the function $g(s)=f(s)(s-1)^{\omega}$
    admits an extension to an infinitely differentiable function on the line $\re s =1$ with $g(1) \neq 0$ and that
	$$f(s) = \frac{g(1)}{(s-1)^\omega} + O\left(\frac{1}{(s-1)^{\omega - \delta}}\right), \quad \text{as } s \to 1.$$
	Then
    $$ \sum_{n \leq x} a_n \sim \frac{g(1)}{\Gamma(\omega)} x(\log x)^{\omega-1},\quad \text{as } x \to \infty.$$
\end{theorem}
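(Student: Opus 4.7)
The plan is to derive the asymptotic by a complex-analytic Tauberian argument following the standard Perron-contour approach, with the branch-point singularity at $s = 1$ handled by a Hankel loop, since the hypothesis only gives smoothness (not holomorphy) of $g$ on $\re s = 1$.

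First I would invoke a truncated Perron formula: for $c > 1$ and $T \geq 1$,
$$A(x) := \sum_{n \leq x} a_n = \frac{1}{2\pi i} \int_{c - iT}^{c + iT} f(s) \frac{x^s}{s}\,ds + O\!\left(\frac{x^c \log x}{T}\right),$$
where the error uses the non-negativity of $a_n$ in a standard way. I would take $c = 1 + 1/\log x$. Since $g$ is only assumed infinitely differentiable on $\re s = 1$ (not analytic there), the contour cannot be pushed past this line, so I would deform it to a contour $\Gamma$ that runs along $\re s = 1$ except for a small Hankel-type loop $\mathcal{H}$ enclosing the branch point at $s = 1$.

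The Hankel loop contributes the main term. Splitting $f(s) = g(1)(s-1)^{-\omega} + \bigl[f(s) - g(1)(s-1)^{-\omega}\bigr]$, the first piece combined with the classical Hankel representation
$$\frac{1}{\Gamma(\omega)} = \frac{1}{2\pi i} \int_{\mathcal{H}} e^{u} u^{-\omega}\,du$$
(applied after the substitution $u = (s-1)\log x$) yields a contribution of $\tfrac{g(1)}{\Gamma(\omega)} x (\log x)^{\omega - 1}$. The remainder piece, which is $O\bigl((s-1)^{1-\omega-\delta}\bigr)$ near $s = 1$ by hypothesis, yields at most $O\bigl(x (\log x)^{\omega - 1 - \delta}\bigr)$ from the Hankel integral—strictly of smaller order.

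The main obstacle, and the only place where the full strength of the \emph{infinite} differentiability of $g$ on $\re s = 1$ enters, is bounding the integral along the vertical segments of $\Gamma$ on $\re s = 1$ away from the loop. The plan is to iterate integration by parts in $t = \im s$: each step transfers a derivative from $x^{1+it}/(1+it)$ (integrating produces a factor $1/(i\log x)$) onto $g(1+it)$ (where the smoothness keeps $g^{(k)}(1+it)$ uniformly bounded on any bounded range of $t$). After $k$ steps this yields a bound $\ll_k x (\log x)^{-k} \log T$. Choosing $T = (\log x)^A$ for some fixed $A$ large in terms of $\omega$, and then choosing $k$ larger than $A + \omega$, makes the vertical contribution $o\bigl(x(\log x)^{\omega - 1}\bigr)$, and absorbs the Perron truncation error $x^c \log x / T$ as well. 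Combining the main term from the Hankel loop with these error estimates yields the claimed asymptotic.
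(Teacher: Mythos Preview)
The paper does not argue from scratch here; it simply invokes Delange's original theorem \cite[Thm.~I]{Del54} and points to the proof of \cite[Thm.~III]{Del54}, which is of Wiener--Ikehara type (smooth $A(e^u)e^{-u}$ by convolution, identify the smoothed quantity as a Fourier integral against the boundary values of $g$ on $\re s=1$, then use $a_n\ge 0$ as the Tauberian condition to unsmooth). That method is tailored to exactly the hypothesis you are given: mere $C^\infty$ regularity on the critical line, with no continuation past it.

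Your Perron-plus-Hankel route is a genuinely different approach, and the sketch has a real gap in the vertical-segment estimate. When you integrate by parts in $t$, the derivative does \emph{not} land only on $g(1+it)$: by the product rule it also hits the factor $(it)^{-\omega}/(1+it)$, and $\partial_t^{\,j}\bigl[(it)^{-\omega}\bigr]$ is of size $|t|^{-\omega-j}$. If the Hankel loop has radius $\epsilon\asymp 1/\log x$ (as your substitution $u=(s-1)\log x$ forces), the vertical segments begin at $|t|=\epsilon$, and after $k$ integrations by parts each boundary term at $t=\epsilon$, as well as the residual integral, is of order
\[
\frac{x}{(\log x)^{k}}\cdot \epsilon^{-\omega-k+1}\;\asymp\; x(\log x)^{\omega-1},
\]
i.e.\ the \emph{same} size as the main term, so no saving is obtained for any $k$. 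The fix is to run the Hankel excursion out to a \emph{fixed} distance from $s=1$ (so the vertical pieces start at $|t|\asymp 1$ and integration by parts genuinely saves powers of $\log x$) and then analyse the full loop more carefully; this is doable but substantially more delicate than your sketch suggests. Two smaller points: the Perron truncation error should carry a factor $f(c)\asymp(\log x)^{\omega}$ rather than $\log x$, and pushing the contour onto $\re s=1$ needs a limiting argument since you only have continuity up to the line, not holomorphy on it.
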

\begin{proof}
 	This is almost the statement of \cite[Thm.~III]{Del54}, however there Delange assumes moreover
	that $f$ be holomorphic on the line $\re s = 1$. However the proof follows
	with minor modifications to the proof of \cite[Thm.~III]{Del54},
	as a standard application of the more general Tauberian theorem \cite[Thm.~I]{Del54}.
	The details are left to the reader (Delange takes $\Psi(u)=1$ in \emph{ibid.}
	and the exact same proof works in our case on taking $\Psi(u)=u^{\delta-1}$.)
\end{proof}

\section{Algebraic tori, toric varieties and their Brauer groups} \label{sec:Tori}
In this section we gather various facts about algebraic tori
and toric varieties over number fields. The main results here are
a description of algebraic Brauer groups of algebraic tori over number
fields and an analogue for subordinate
Brauer group elements of a theorem of Voskresenski\v{\i} \cite{Vos70}.
We finish by studying heights on toric varieties.

\subsection{Algebraic tori over perfect fields}
Let $k$ be a perfect field.
Recall that an algebraic torus over $k$ is an algebraic group
$T$ over $k$ such that $\Tbar=T \times_k \kbar$ is isomorphic to $\Gm^n$, for some $n \in \NN$.
We denote by $1 \in T(k)$ the identity element of $T$.

The category of algebraic
tori is dual to the category of free $\mathbb{Z}$-modules with continuous $G_k$-action.
This is given by associating to an algebraic torus $T$ its character group
$X^*(\Tbar)=\Hom(\Tbar,\Gm)$. Note that $X^*(T)=X^*(\Tbar)^{G_k}$ is the
collection of characters of $\Tbar$ which are defined over $k$.
We denote by $X_*(T) = \Hom(X^*(T),\ZZ)$ the collection of cocharacters of $T$
and also let $X^*(T)_\RR=X^*(T) \otimes_\ZZ \RR$ and $X_*(T)_\RR=X_*(T) \otimes_\ZZ \RR$.
The splitting field of $T$ is the fixed field of the
kernel of the representation $G_k \to \GL(X^*(\Tbar))$; it is the smallest Galois  extension of
$k$ over which $T$ becomes isomorphic to $\Gm^n$.

\subsection{Algebraic tori over number fields}
The standard references for this section are the papers \cite{Ono61} and \cite{Ono63}.
Many of the facts presented here are natural generalisations of the
case of $\Gm$ studied in Tate's thesis \cite[Ch.~XV]{CF10}.
\subsubsection{The local points} \label{sec:tori_local_space}
Let $T$ be an algebraic torus over a number field $F$. For any place $v$ of $F$ we shall denote by
$T(\OO_v)$ the maximal compact subgroup of $T(F_v)$.
For non-archimedean $v$, we have a bilinear pairing
$$T(F_v) \times X^*(T_v) \to \ZZ, \quad (t,m) \mapsto \frac{\log|m(t)|_v}{\log q_v}.$$
This pairing induces an exact sequence
\begin{equation} \label{seq:local_finite}
	0 \to T(\OO_v) \to T(F_v) \to X_*(T_v).
\end{equation}
The image of $T(F_v)\to X_*(T_v)$ is open of finite index, and 
this map is surjective if $v$ is unramified in the splitting field of $T$.
For archimedean $v$ we have a similar pairing
$$T(F_v) \times X^*(T_v)_{\RR} \to \RR, \quad (t,m) \mapsto \log|m(t)|_v,$$
which induces a short exact sequence
\begin{equation} \label{seq:local_infinite}
	0 \to T(\OO_v) \to T(F_v) \to X_*(T_v)_\RR \to 0.
\end{equation}
For archimedean $v$, the maps $T(F_v) \to X_*(T_v)_\RR$ and $T(F_v) \to X_*(T)_\RR$ admit canonical sections.
The section to the first map is constructed in \cite[Lem.~2.18]{Bou11}. For the second map,
it suffices to construct a canonical section of the map $X_*(T_v)_\RR \to X_*(T)_\RR$.
This may be given by the dual of the map
\begin{align*}
	X^*(T_v)_\RR &\to X^*(T)_\RR, \quad
	m \mapsto \frac{1}{|\Gamma|}\sum_{\gamma \in \Gamma} m^\gamma,
\end{align*}
where $\Gamma$ is the Galois group of the splitting field of $T$. We also let
$T_\infty = \prod_{v|\infty}T_v$.

\subsubsection{The adelic space} \label{sec:tori_adelic_space}
The local pairings give rise to an adelic pairing
$$T(\Adele_F) \times X^*(T)_\RR \to \RR,
\quad ((t_v),m) \mapsto \sum_{v \in \Val(F)} \frac{\log|m(t)|_v}{\log q_v},$$
where we take $\log q_v = 1$ if $v$ is archimedean.
If we denote by $T(\Adele_F)^1$ the left kernel of this pairing,
we have a short exact sequence
\begin{equation} \label{seq:adelic_splitting}
	0 \to T(\Adele_F)^1 \to T(\Adele_F) \to X_*(T)_\RR \to 0.
\end{equation}
Under the diagonal embedding $T(F)$ is a discrete and cocompact subgroup of $T(\Adele_F)^1$.
The sequence (\ref{seq:adelic_splitting}) admits a splitting which is natural in the sense of category theory,
though a choice still needs to be made. As explained in the previous section, for $v$ archimedean we may
canonically identify $X_*(T)_\RR$ with a subgroup of $T(F_v)$. We therefore choose the section given by
\begin{align*}
	X_*(T)_\RR &\to T(\Adele_F) , \quad
	n \mapsto (\epsilon_v n/[F:\QQ])_{v|\infty} \times (1)_{v \nmid \infty},
\end{align*}
where $\epsilon_v = 1$ if $v$ is real and $\epsilon_v =2$ is $v$ is complex. This choice gives a functorial isomorphism
\begin{equation} \label{eqn:adelic_splitting}
    T(\Adele_F) \cong T(\Adele_F)^1 \times X_*(T)_\RR.
\end{equation}

\subsubsection{The Hasse principle and weak approximation}
Let
$$\Sha(T) = \ker \left(\HH^1(F,T) \to \prod_{v \in \Val(F)} \HH^1(F_v,T)\right),$$
denote the Tate-Shafarevich group of $T$. 
This is finite and Sansuc \cite[Prop.~8.3]{San81} constructed a canonical isomorphism
\begin{equation} \label{eqn:Be_Sha}
    \Sha(T) \cong \Be(T)^\sim,
\end{equation}
where $\Be(T)$ is given by (\ref{def:Be}). Next let
\begin{equation} \label{def:A(T)}
	A(T)= T(\Adele_F)/ \overline{T(F)}^w,
\end{equation}
where $\overline{T(F)}^w$ denotes the closure of $T(F)$ in $T(\Adele_F)$
with respect to the product topology.
This is finite and by a theorem of Voskresenski\v{\i} \cite{Vos70} (see also \cite[Thm.~9.2]{San81})
there is a short exact sequence
\begin{equation} \label{seq:Br_Sha}
    0 \to \Be(T) \to \Brnr T / \Br F \to A(T)^\sim \to 0,
\end{equation}
where $\Be(T)$ is given by \eqref{def:Be}.
Here we have used the isomorphisms
$\Brnr T / \Br F \cong \HH^1(F,\Pic \overline{X}) \cong \Br_a X$,
which hold for any smooth proper model $X$ of $T$ (see e.g.~\cite[Lem.~6.3(iii)]{San81}).

\subsubsection{Characters}
Given a place $v$ of $F$ and a character $\chi_v$ of $T(F_v)$, we shall say that
$\chi_v$ is \emph{unramified} if it is trivial on $T(\OO_v)$.
We say that a character $\chi$ of $T(\Adele_F)$ is \emph{automorphic} if it is trivial on $T(F)$.
Note that the automorphic characters of $\Gm(\Adele_F)=\Adele_F^*$ are exactly the Hecke characters of $F$.
The canonical sections of the composition $T(\Adele_F) \to \prod_{v \mid \infty} T(F_v) \to X_*(T_\infty)_\RR$
explained in Section \ref{sec:tori_local_space} give rise to a ``type at infinity'' map
\begin{equation}\label{def:type_at_infinity}
	\Fourier{T(\Adele_F)} \to X^*(T_\infty)_\RR.
\end{equation}
If we let $K_T=\prod_{v \in \Val(F)}T(\OO_v) \subset T(\Adele_F)^1$, then the splitting (\ref{eqn:adelic_splitting}) induces a map
$$\Fourier{\left(T(\Adele_F)^1/T(F)K_T\right)} \to X^*(T_\infty)_\RR.$$
This has finite kernel and moreover its image is a lattice of codimension $\rank X^*(T)$ \cite[Lem.~4.52]{Bou11}
(this may be viewed as a generalisation of finiteness of the class number and Dirichlet's unit theorem).

\subsubsection{The Haar measure and the Tamagawa number} \label{sec:Haar_measure_Tamagawa}
Let $\omega$ be an invariant differential form on $T$. By a classical construction (see e.g.~\cite[\S2.1.7]{CLT10}),
for each place $v$ of $F$ we obtain a measure (denoted $|\omega|_v$) on $T(F_v)$,
which is a Haar measure on $T(F_v)$. Let
$$\mu_v = c_v^{-1}|\omega|_v, \qquad c_v = \left \{
	\begin{array}{ll}
		L_v(X^*(\Tbar),1)^{-1},& \quad v \text{ non-archimedean}, \\
		1,& \quad v \text{ archimedean}, \\
	\end{array}\right.$$	
where $L(X^*(\Tbar),s)$ is the associated Artin $L$-function.
The product of the $\mu_v$ converges to yield
a Haar measure $\mu$ on $T(\Adele_F)$ (see \cite[\S3.3]{Ono61}).  
By the product formula, this measure is independent of the choice of
$\omega$. By the splitting (\ref{eqn:adelic_splitting})
we obtain a Haar measure $\mu^1$ on $T(\Adele_F)^1$, on equipping $X_*(T)_\RR$ with the
unique Haar measure such that $X_*(T) \subset X_*(T)_\RR$ has covolume one.
Ono's works \cite{Ono61, Ono63}
imply that 
\begin{equation} \label{eqn:Ono}
	\vol(T(\Adele_F)^1/T(F)) = L^*(X^*(\Tbar),1) \cdot \frac{|\Pic T|}{|\Sha(T)|},
\end{equation}
where $$L^*(X^*(\Tbar),1) = \lim_{s \to 1}(s-1)^{\rank X^*(T)}L(X^*(\Tbar),s).$$

\subsection{Algebraic Brauer groups of algebraic tori}
\subsubsection{Brauer groups over perfect fields}
Let $T$ be an algebraic torus over a perfect field $k$. In this paper, we shall be particularly
interested in the group
\begin{equation}
    \Br_e T = \{ b \in \Br_1 T: b(1)=0\}. \label{def:Br_e}
\end{equation}
There are canonical isomorphisms $\Br_1 T \cong \Br_0 T \bigoplus \Br_e T$
and $\Br_e T \cong \Br_a T$.

\begin{lemma}\label{lem:Pic_Br_iso}
	There are natural isomorphisms
	$$\Pic T \cong \HH^1(k,X^*(\Tbar)), \quad \Br_e T \cong \HH^2(k,X^*(\Tbar)).$$
	In particular $\Pic T$ is finite.
\end{lemma}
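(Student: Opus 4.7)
The plan is to apply the Leray spectral sequence
$$E_2^{p,q} = H^p(k, H^q(\Tbar, \Gm)) \Rightarrow H^{p+q}(T, \Gm)$$
attached to the structure morphism $T \to \Spec k$. The geometric input is twofold: since $\Tbar \cong \Gm^n$ is a principal open subset of affine space it is factorial, so $\Pic \Tbar = H^1(\Tbar, \Gm) = 0$; and the identity section $1 \in T(k)$ induces a $G_k$-equivariant splitting
$$H^0(\Tbar, \Gm) \cong \kbar^* \oplus X^*(\Tbar),$$
in which the $\kbar^*$-projection is evaluation at $1$ and the $X^*(\Tbar)$-factor is embedded by sending a character to its defining invertible function.

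First I would read off $\Pic T$ from the five-term exact sequence. Since $E_2^{0,1}=0$, the edge map is an isomorphism $\Pic T \cong H^1(k, H^0(\Tbar, \Gm))$, and Hilbert 90 ($H^1(k,\kbar^*)=0$) combined with the splitting above yields $\Pic T \cong H^1(k, X^*(\Tbar))$. For the Brauer group, the same vanishing $E_2^{0,1}=0$ implies $E_\infty^{2,0} = E_2^{2,0}$, so the edge map
$$H^2(k, H^0(\Tbar, \Gm)) \hookrightarrow H^2(T, \Gm) = \Br T$$
is injective with image equal to the filtration piece $\ker(\Br T \to H^2(\Tbar, \Gm)^{G_k}) = \Br_1 T$. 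Splitting off the $\kbar^*$-factor converts this into $\Br_1 T \cong \Br k \oplus H^2(k, X^*(\Tbar))$.

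To single out $\Br_e T$, I would invoke functoriality of the spectral sequence under $\Spec k \to T$ corresponding to $1 \in T(k)$: the induced map on $E_2^{2,0}$ is exactly the projection coming from the splitting, since characters evaluate trivially at the identity. Hence the kernel of the evaluation $\Br_1 T \to \Br k$ at $1$ is precisely $H^2(k, X^*(\Tbar))$, which is $\Br_e T$ by definition. Finiteness of $\Pic T$ is then automatic: the $G_k$-action on $X^*(\Tbar)$ factors through the finite Galois group $G$ of the splitting field of $T$, and for any finitely generated $\ZZ$-module $M$ the group $H^1(G, M)$ is finitely generated and annihilated by $|G|$, hence finite. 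The main point to verify carefully is the injectivity and correct identification of the image of the edge map in $\Br T$; the rest is bookkeeping once the splitting of $H^0(\Tbar,\Gm)$ and the vanishing $\Pic\Tbar=0$ are in hand.
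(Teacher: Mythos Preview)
Your proposal is correct and follows the same route as the paper, which simply writes ``This is a standard application of the Hochschild--Serre spectral sequence, see e.g.\ \cite[Lem.~6.9(ii)]{San81}.'' You have unpacked precisely that argument: the vanishing $\Pic\Tbar=0$, the $G_k$-equivariant splitting of $\kbar[T]^*$ via evaluation at~$1$, the edge-map identifications, and the use of functoriality along the identity section to isolate $\Br_e T$ inside $\Br_1 T$.
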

\begin{proof}
	This is a standard application of the Hochshild-Serre spectral sequence, see e.g.
	\cite[Lem.~6.9(ii)]{San81}.
\end{proof}

Our interest in $\Br_e T$ stems from the following result of Sansuc, which is
one of the key reasons why we focus on algebraic Brauer group elements only in Theorem \ref{thm:Brauer}.

\begin{lemma}\label{lem:Br_e_homo}
	The pairing
	\begin{align*}
		\Br_e T \times T(k) \to \Br k, \qquad
		(b,t) \mapsto b(t),
	\end{align*}
	is bilinear.
\end{lemma}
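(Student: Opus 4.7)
The pairing is additive in the first variable because, for fixed $t \in T(k)$, the map $b \mapsto b(t)$ is the pullback along the morphism $t : \Spec k \to T$, and pullback is a homomorphism of Brauer groups. The content of the lemma therefore lies in proving $b(t_1 t_2) = b(t_1) + b(t_2)$ for $b \in \Br_e T$ and $t_1, t_2 \in T(k)$.

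My plan is to deduce this identity from a global identity on $T \times T$. Let $m, p_1, p_2 : T \times T \to T$ denote the group law and the two projections. I will show that for every $b \in \Br_e T$,
\begin{equation*}
	m^* b \;=\; p_1^* b + p_2^* b \qquad \text{in } \Br_e(T \times T).
\end{equation*}
Granting this, pulling back along the $k$-point $(t_1, t_2) \in (T \times T)(k)$, using $m(t_1, t_2) = t_1 t_2$ and $p_i(t_1, t_2) = t_i$, and invoking functoriality of the Brauer pairing, yields the required identity.

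To establish the global identity, I would invoke the isomorphism $\Br_e T \cong H^2(k, X^*(\Tbar))$ of Lemma \ref{lem:Pic_Br_iso}, which is functorial in $T$ by construction. Since $\overline{T \times T} = \Tbar \times \Tbar$, the character lattice decomposes $G_k$-equivariantly as $X^*(\overline{T \times T}) = X^*(\Tbar) \oplus X^*(\Tbar)$, and on this lattice the induced maps are $m^*(\chi) = (\chi, \chi)$, $p_1^*(\chi) = (\chi, 0)$, $p_2^*(\chi) = (0, \chi)$; this is nothing but the multiplicativity of characters, $\chi(t_1 t_2) = \chi(t_1)\chi(t_2)$, rewritten additively on $X^*$. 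Hence $m^* = p_1^* + p_2^*$ already as maps of $G_k$-modules, and applying the additive functor $H^2(k, -)$ preserves this equality.

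The argument is essentially formal; the only point requiring attention is the functoriality in $T$ of the identification in Lemma \ref{lem:Pic_Br_iso}, which is built into the Hochschild--Serre spectral sequence underlying it. I anticipate no substantial obstacle.
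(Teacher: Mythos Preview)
Your argument is correct. The paper itself does not give a proof but simply cites Sansuc \cite[Lem.~6.9]{San81}, where the result is stated and proved for arbitrary connected linear algebraic groups. Your approach---reducing to the identity $m^* = p_1^* + p_2^*$ on character lattices via the natural isomorphism $\Br_e T \cong H^2(k,X^*(\Tbar))$---is exactly the specialisation of Sansuc's argument to the case of tori, where it becomes particularly transparent because $\Pic \Tbar = 0$ and $\kbar[T]^* / \kbar^* \cong X^*(\Tbar)$ canonically. The one point you flag, functoriality of the Hochschild--Serre identification for homomorphisms of tori, is unproblematic: $m$, $p_1$, $p_2$ all preserve the identity (here using that $T$ is commutative so that $m$ is a group homomorphism), hence induce maps on $\Br_e$ compatible with those on $H^2(k,X^*(-))$.
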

\begin{proof}
	This is a special case of \cite[Lem.~6.9]{San81}.
\end{proof}

There is another description of this pairing which we shall need.
Namely, as noted by Sansuc at the top of page 65 of \cite{San81}, we have a commutative diagram

\begin{equation}\label{diag:Br_commute}
\begin{split}
\xymatrix@C=0.1pt{
	\Br_e T\ar[d]  &\times 						& T(k) \ar[d] \ar[rrrrr] &  & & & & \Br k \ar@{=}[d] \\
	\HH^2(k, X^*(\Tbar))  & \times &  \HH^0(k,T(\kbar))  \ar[rrrrr]^{\smile} & & & & & \HH^2(k,\kbar^*)  }
\end{split}
\end{equation}
where $\smile$ denotes the cup product and the map $\Br_e T \to \HH^2(k, X^*(\Tbar))$ is the natural isomorphism
from Lemma \ref{lem:Pic_Br_iso}.

\subsubsection{Brauer groups over number fields}
We now give a description of the algebraic Brauer groups of tori over number fields
and their completions, via the Brauer pairing.
We begin with an elementary lemma on $\QQ/\ZZ$-duality and completions. 

\begin{lemma} \label{lem:completion}
	Let $G$ be a topological group. Assume that every element of
	$G^\sim$ has finite order. Then the natural map
	\begin{equation*} 
		(\widehat{G})^\sim \to G^\sim,
	\end{equation*}
	is an isomorphism.
\end{lemma}
\begin{proof}
	This follows from the observation that the map $G \to \widehat{G}$
	has dense image and 
	induces a quotient-preserving bijection between the finite-index normal open subgroups of $G$ and $\widehat{G}$.
\end{proof}

The assumptions of Lemma \ref{lem:completion} hold if $G$ is Hausdorff and an extension of a compact group by a finite
rank $\ZZ$- or $\RR$-module. In particular, let $T$ be an algebraic torus over a number field $F$.
Then the sequences \eqref{seq:local_finite}, \eqref{seq:local_infinite} and \eqref{seq:adelic_splitting}
imply that $T(\Adele_F)/T(F)$ and $T(F_v)$ satisfy the assumptions of Lemma \ref{lem:completion}
for all $v \in \Val(F)$. 


\begin{theorem}\label{thm:Brauer_local}
	For any place $v$ of $F$, the bilinear pairing
	$$\Br_e T_v \times T(F_v)  \to \Br F_v \subset \QQ / \ZZ,$$
	induces an isomorphism
	$$\Br_e T_v \cong T(F_v)^\sim,$$
	of abelian groups.
\end{theorem}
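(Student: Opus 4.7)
The plan is to reduce the statement to local Tate--Nakayama duality for tori, applied to $T_v$ over the local field $F_v$. First, by Lemma \ref{lem:Pic_Br_iso} we identify $\Br_e T_v \cong H^2(F_v, X^*(\overline{T_v}))$, and the commutative diagram \eqref{diag:Br_commute} shows that, under this identification, the pairing in the theorem coincides with the cup product
$$H^2(F_v, X^*(\overline{T_v})) \times H^0(F_v, \overline{T_v}) \to H^2(F_v, \overline{F_v}^*) = \Br F_v.$$
Thus the theorem is equivalent to the assertion that this cup product induces an isomorphism $H^2(F_v, X^*(\overline{T_v})) \cong T(F_v)^\sim$.

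Second, I would invoke the local duality theorem for tori (Tate--Nakayama, see e.g.\ Milne's \emph{Arithmetic Duality Theorems}, Ch.~I): the cup product above induces a canonical isomorphism
$$H^2(F_v, X^*(\overline{T_v})) \xrightarrow{\sim} (\widehat{T(F_v)})^\sim,$$
where $\widehat{T(F_v)}$ denotes the completion of $T(F_v)$ with respect to its open subgroups of finite index (at an archimedean $v$ one works with the appropriate Tate modification, but the formulation above is the one that will persist after applying Lemma \ref{lem:completion}).

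Third, I would apply Lemma \ref{lem:completion} to replace the profinite completion by $T(F_v)$ itself, which requires verifying $\divi(T(F_v)) \subset T(F_v)_0$. At an archimedean $v$, $T(F_v)$ is a real Lie group with finitely many connected components, so every divisible element lies in $T(F_v)_0$. At a non-archimedean $v$, $T(F_v)$ is totally disconnected, so one must show $\divi(T(F_v)) = 0$; this follows from the exact sequence \eqref{seq:local_finite}, since the image in $X_*(T_v)$ is contained in a finitely generated free abelian group and the kernel $T(\OO_v)$ is a compact $p$-adic analytic group isomorphic to the product of a finite group of roots of unity with a finitely generated $\ZZ_p$-module, neither factor admitting nontrivial divisible subgroups.

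The only real obstacle is the precise invocation of local Tate--Nakayama duality for the torus $T_v$ (as opposed to a finite Galois module), together with careful bookkeeping of the completion on the $H^0$-side. Once this is quoted, the commutative diagram \eqref{diag:Br_commute} identifies the pairing in the statement with the cup product, and Lemma \ref{lem:completion} disposes cleanly of the difference between $T(F_v)$ and $\widehat{T(F_v)}$, yielding the desired isomorphism $\Br_e T_v \cong T(F_v)^\sim$.
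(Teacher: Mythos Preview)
Your proposal is correct and follows essentially the same route as the paper: identify the pairing with the cup product via \eqref{diag:Br_commute} and Lemma~\ref{lem:Pic_Br_iso}, invoke local Tate duality to obtain $H^2(F_v,X^*(\overline{T_v}))\cong(\widehat{T(F_v)})^\sim$, and then apply Lemma~\ref{lem:completion} to strip the completion.

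The only divergence is in the verification that $\divi(T(F_v))=0$ for non-archimedean $v$. The paper simply embeds $T(F_v)$ into $T(K)$ for a splitting field $K/F_v$ and uses the well-known split case. You instead push the divisible subgroup into $T(\OO_v)$ via \eqref{seq:local_finite} and then argue it vanishes there. This works, but your structural description of $T(\OO_v)$ as ``a finite group times a finitely generated $\ZZ_p$-module'' is only literally true when $T_v$ is split; for the general case it is cleaner to note that $T(\OO_v)$ is compact and totally disconnected, hence profinite abelian, and any profinite abelian group has trivial maximal divisible subgroup (a nonzero element is detected by some finite quotient, where it cannot be divisible by the exponent). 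With that tweak your argument is complete and equivalent to the paper's.
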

\begin{proof}
	It follows from local Tate duality that the pairing
	$$\HH^2(F_v,X^*(\Tbar)) \times \widehat{T(F_v)} \to \QQ/\ZZ,$$
	is perfect (see \cite[Cor.~I.2.4]{Mil06}
	for the non-archimedean case and \cite[Thm.~I.2.13]{Mil06} 
	for the archimedean case). In the light of (\ref{diag:Br_commute}), 
	the result follows from Lemma \ref{lem:completion}.
\end{proof}

\begin{theorem}\label{thm:Brauer_global}
	The pairing
	\begin{align*}
	    \Br_e T \times T(\Adele_F) \to \QQ / \ZZ,
	\end{align*}
	is bilinear and induces a short exact sequence
	$$ 0 \to \Be(T) \to \Br_e T \to (T(\Adele_F)/T(F))^\sim \to 0,$$
	where $\Be(T)$ is given by (\ref{def:Be}).
\end{theorem}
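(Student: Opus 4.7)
My plan is to establish the three claims --- bilinearity and well-definedness of the map, exactness at $\Br_e T$ (the kernel being $\Be(T)$), and surjectivity onto $(T(\Adele_F)/T(F))^\sim$ --- separately, with surjectivity being the main obstacle.

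For bilinearity and well-definedness, the pairing is the sum $\sum_v \inv_v b(t_v)$ of invariants of local pairings. Each local pairing is bilinear by Lemma~\ref{lem:Br_e_homo} applied over $F_v$, and the sum is finite by the standard spreading-out argument: $b$ extends to a class in $\Br \mathcal{T}$ for a flat model of $T$ over a ring of $S$-integers, so that $b(t_v) \in \Br \OO_v = 0$ whenever $v \notin S$ and $t_v \in \mathcal{T}(\OO_v)$. The induced character of $T(\Adele_F)$ has finite order because $\Br_e T \cong H^2(F, X^*(\Tbar))$ is torsion by Lemma~\ref{lem:Pic_Br_iso}. Triviality on the diagonal $T(F)$ is a direct application of the global reciprocity sequence \eqref{eqn:CFT}.

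For the kernel computation, I would first observe that $\Be(T) \subset \Br_e T$: any $b \in \Be(T)$ satisfies $b \otimes F_v = 0 \in \Br_1 T_v$, whence $b(1) \in \Br F$ vanishes in every completion and hence in $\Br F$. Such $b$ clearly lies in the kernel of the map to $(T(\Adele_F)/T(F))^\sim$. Conversely, given $b \in \Br_e T$ in the kernel, I would test the pairing against adeles of the form $(1,\ldots,1,t_v,1,\ldots)$; since $b(1) = 0$ kills the contributions from every place other than $v$, this collapses to $\inv_v b(t_v) = 0$ for all $v$ and all $t_v \in T(F_v)$. Theorem~\ref{thm:Brauer_local} then forces $b_v = 0$ in $\Br_e T_v$, and the canonical splitting $\Br_1 T_v = \Br_0 T_v \oplus \Br_e T_v$ upgrades this to $b_v = 0$ in $\Br_1 T_v$, so that $b \in \Be(T)$.

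The hard part will be surjectivity. Using \eqref{eqn:adelic_splitting} together with the fact that $X_*(T)_\RR$ admits no nontrivial continuous characters of finite order, the task reduces to realizing characters of the compact quotient $T(\Adele_F)^1/T(F)$. Decomposing such a character as $\chi = \prod_v \chi_v$, Theorem~\ref{thm:Brauer_local} converts each $\chi_v$ into a local class $b_v \in \Br_e T_v \cong H^2(F_v, X^*(\Tbar))$, with $b_v = 0$ for almost all $v$. I would then invoke Poitou--Tate global duality applied to the Galois module $X^*(\Tbar)$, whose Cartier dual is $T(\Fbar)$. The relevant tail of the nine-term Poitou--Tate sequence is
$$H^2(F, X^*(\Tbar)) \;\longrightarrow\; \bigoplus_v H^2(F_v, X^*(\Tbar)) \;\longrightarrow\; T(F)^{\wedge},$$
and, under the identifications of Lemma~\ref{lem:Pic_Br_iso}, Theorem~\ref{thm:Brauer_local}, and the compatibility \eqref{diag:Br_commute}, the second arrow sends $(b_v)$ to the character $t \mapsto \sum_v \inv_v b_v(t)$. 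The hypothesis that $\chi$ is trivial on $T(F)$ is therefore precisely the statement that the Poitou--Tate obstruction to lifting $(b_v)$ vanishes, so that a global class $b \in \Br_e T$ inducing $\chi$ exists. The principal technical difficulty will be carefully reconciling the Brauer pairing with the cup-product pairing used in Poitou--Tate, which is achieved via the commutative diagram \eqref{diag:Br_commute}.
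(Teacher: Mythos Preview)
Your treatment of bilinearity and of the kernel is correct and is essentially what the paper does (the paper also reduces the kernel computation to Theorem~\ref{thm:Brauer_local} applied at each place).

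The surjectivity argument, however, contains a genuine error. You claim that after decomposing $\chi = \prod_v \chi_v$ and converting each $\chi_v$ to a local class $b_v \in H^2(F_v, X^*(\Tbar))$, one has $b_v = 0$ for almost all $v$. This is false already for $T = \Gm$: a nontrivial Hecke character of finite order has $\chi_v$ nontrivial at infinitely many places (those where Frobenius is not in the kernel of the corresponding Galois character). What is true is only that $\chi_v$ is \emph{unramified} for almost all $v$, which under the local duality corresponds to $b_v$ lying in the unramified cohomology $H^2_{nr}(F_v, X^*(\Tbar))$. This group is generally nonzero --- for $T=\Gm$ it is $H^2(\hat{\ZZ},\ZZ)\cong H^1(\hat{\ZZ},\QQ/\ZZ)\cong\QQ/\ZZ$ --- because $X^*(\Tbar)$ is not a torsion module. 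Consequently the correct receptacle in the Poitou--Tate sequence is the restricted product $\prod'_v H^2(F_v, X^*(\Tbar))$ with respect to the $H^2_{nr}$, not the direct sum. (Incidentally, ``Cartier dual'' is not the right term here; the relevant duality is the one pairing a torus with its character module.) Your approach can be repaired along these lines, but as written the element $(b_v)$ does not live where you need it to, and the exact sequence you quote does not apply.

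The paper avoids this local-to-global bookkeeping altogether. It invokes Nakayama duality directly to obtain a perfect pairing $H^2(F,X^*(\Tbar)) \times \widehat{G(T)} \to \QQ/\ZZ$, where $G(T)=H^0(F,T(C_{\Fbar}))$ is built from the id\`ele class group. It then shows $(\widehat{G(T)})^\sim \cong G(T)^\sim$ via Lemma~\ref{lem:completion} (which requires checking $\divi(G(T))\subset G(T)_0$), and finally uses the injection $T(\Adele_F)/T(F)\hookrightarrow G(T)$ together with injectivity of $\QQ/\ZZ$ to conclude that the dual map $G(T)^\sim \to (T(\Adele_F)/T(F))^\sim$ is surjective. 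This route trades the place-by-place analysis for a single global duality statement plus a topological lemma about divisible subgroups.
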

\begin{proof}
	Bilinearity follows from applying Lemma \ref{lem:Br_e_homo} to the local pairings.
	To derive the exact sequence, we shall use Nakayama duality \cite[Cor.~I.4.7]{Mil06}. 
	Let 
	$$C_{\Fbar} = \varinjlim_{F \subset E} \Adele_E^*/E^*, \qquad T(C_{\Fbar})=\Hom(X^*(\Tbar), C_{\Fbar}),
	\qquad G(T)=\HH^0(F,T(C_{\Fbar})).$$
	Then Nakayama duality implies that the pairing
	$$\HH^2(F,X^*(\Tbar)) \times \widehat{G(T)} \to \QQ/\ZZ,$$
	given by composing the cup product with the natural surjection $\HH^2(F,C_{\Fbar}) \to \QQ/\ZZ$,
	is perfect.	Next, as the natural inclusion $\Adele_F^* /  F^* \subset C_{\Fbar}$ has Galois
	invariant image, we deduce from (\ref{diag:Br_commute}) a commutative diagram
	\begin{equation} \label{eqn:Br_commute}
	\begin{split}
	\xymatrix@C=0.1pt{	\Br_e T  \ar[d]&\times & T(\Adele_F)/T(F) \ar[d] \ar[rrrrr] &  & & & & \QQ/\ZZ \ar@{=}[d]\\
	\HH^2(F, X^*(\Tbar))  & \times &  G(T)  \ar[rrrrr] & & & & & \QQ/\ZZ .} 
	\end{split}
	\end{equation}
	Let us now use Lemma \ref{lem:completion} to show that 
	\begin{equation} \label{eqn:G(T)}
		(\widehat{G(T)})^\sim \cong G(T)^\sim.
	\end{equation}
	To see this, note that $T(\Adele_F)/T(F)$ satisfies the assumptions of 
	Lemma \ref{lem:completion} and that
	$T(\Adele_F)/T(F)$ is a closed
	subgroup of $G(T)$ of finite index (see \cite[eq.~(2.3.5)]{Bou11}). From this
	it follows that $G(T)$ also satisfies the assumptions of Lemma \ref{lem:completion},
	as required.
	
	Combining \eqref{eqn:Br_commute} with \eqref{eqn:G(T)}, Nakayama duality
	and the fact that $T(\Adele_F)/T(F) \subset G(T)$ is closed of finite index,
	we deduce that the map $\epsilon: \Br_e T \to (T(\Adele_F)/T(F))^\sim$ is surjective.
	Next let $b \in \ker \epsilon$, so that its induced character of $T(\Adele_F)$ is trivial.
	Then Theorem \ref{thm:Brauer_local}  implies that $b \otimes F_v =0$ for each $v \in \Val(F)$,
	i.e.~$b \in \Be(T)$. As clearly $\Be(T) \subset \ker \epsilon$, the result is proved.
\end{proof}

\begin{corollary}\label{cor:rational}
	Suppose that $T$ is rational. Then $\Be(T)=0$ and the Brauer pairing
	induces an isomorphism 
	$$\Br_e T \cong \left(T(\Adele_F)/T(F)\right)^\sim.$$
\end{corollary}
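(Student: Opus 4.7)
The plan is to deduce the corollary directly from Theorem \ref{thm:Brauer_global}, which supplies the short exact sequence
$$0 \to \Be(T) \to \Br_e T \to (T(\Adele_F)/T(F))^\sim \to 0.$$
This immediately reduces the entire task to verifying that $\Be(T) = 0$ whenever $T$ is rational; once that vanishing is established, the claimed isomorphism falls out for free from the surjectivity of the right-hand map.

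To establish the vanishing, I would combine two inputs. First, the Voskresenski\u{\i} exact sequence \eqref{seq:Br_Sha} embeds $\Be(T)$ into $\Brnr T / \Br F$, so it suffices to show that the latter quotient is trivial. Second, by purity, $\Brnr T$ coincides with $\Br X$ for any smooth proper model $X$ of $T$, and $\Br X / \Br_0 X$ is a birational invariant of smooth proper varieties in characteristic zero. Because $T$ is $F$-rational, any such $X$ is $F$-birational to $\PP^n_F$, and since $\Br \PP^n_F = \Br_0 \PP^n_F$, the same must hold for $X$. Consequently $\Brnr T / \Br F = 0$, forcing $\Be(T) = 0$.

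I do not foresee any real obstacle here, as the argument is essentially a formal assembly of the structural results already in place. An alternative route would be to invoke Sansuc's isomorphism $\Sha(T) \cong \Be(T)^\sim$ from \eqref{eqn:Be_Sha} and cite the classical fact that rational (a fortiori stably rational) tori have trivial Tate--Shafarevich group. The birational-invariance argument, however, keeps the entire proof internal to the machinery developed in this section, and so is the more natural choice for the present paper.
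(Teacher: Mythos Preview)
Your proposal is correct and follows essentially the same strategy as the paper: reduce to Theorem~\ref{thm:Brauer_global} and then kill $\Be(T)$ via birational invariance. The paper's argument is marginally more direct: rather than passing through the Voskresenski\v{\i} sequence \eqref{seq:Br_Sha} and the birational invariance of $\Brnr T/\Br F$, it invokes the fact that $\Be(\,\cdot\,)$ itself is a birational invariant \cite[Lem.~6.1]{San81}, giving $\Be(T)=\Be(\PP^n_F)=\Be(F)=0$ in one step.
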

\begin{proof}
	As $\Be(T)$ is a birational invariant \cite[Lem.~6.1]{San81},
	we have $\Be(T) = \Be(\PP^n_F)=\Be(F)=0$.
    The result then follows from Theorem \ref{thm:Brauer_global}.
\end{proof}

\subsection{Toric varieties}
\label{sec:toric_varieties}
Let $T$ be an algebraic torus over a perfect field $k$. In this paper a toric variety for $T$ is a
smooth projective variety $X$ with a faithful action of $T$ that has an open dense orbit
which contains a rational point. The complement of this orbit is a divisor, whose irreducible
components we call the \emph{boundary components} of $X$.
We follow the approach to toric varieties taken
in \cite{CLT14}.

\subsubsection{The boundary components}
Fix a toric variety $X$ for an algebraic torus $T$ over a number field $F$.
Let $\overline{\A}$ denote the set of boundary components of $\overline{X}$ and similarly
define $\A$ (resp.~$\A_v$) to be the set of boundary components of $X$ (resp.~$X_v$ for a place $v$ of $F$).
Given $\alpha \in \A$, we let $D_{\alpha}$ denote the corresponding irreducible divisor in $X$, we define
$F_{\alpha}= \Fbar \cap F(D_{\alpha}) \subset \Fbar(D_\alpha)$
and let $f_\alpha=[F_\alpha:F]$.
For $\overline{\alpha} \in \overline{\A}$ and $\alpha_v \in \A_v$, we define $D_{\overline{\alpha}},
D_{\alpha_v}, F_{\alpha_v}$ and $f_{\alpha_v}$ similarly.
Given a place $v$ of $F$, we say that an element
$\alpha_v \in \A_v$ \emph{divides} an element $\alpha \in \A$ (written $\alpha_v \mid \alpha$) if
$D_{\alpha_v} \subset D_{\alpha}$. Note that for $v$ non-archimedean and $\alpha \in \A$, there is a bijective
correspondence between those $\alpha_v \in \A_v$ such that $\alpha_v \mid \alpha$ and those places
$w$ of $F_\alpha$ such that $w \mid v$. There is an isomorphism
\begin{equation} \label{eqn:canonical_bundle}
	\omega_X \cong \bigotimes_{\alpha \in \A} \OO_X(-D_\alpha),
\end{equation}
where $\omega_X$ denotes the canonical bundle of $X$ (over $\CC$ this is \cite[Thm.~8.2.3]{CLS11},
and a similar proof works in our setting).

\subsubsection{The Picard group and the Brauer group}
Associated to each toric variety $X$, we have the following fundamental short exact sequence of Galois modules (see e.g.~\cite[Thm.~4.2.1]{CLS11}),
\begin{equation} \label{eqn:fundamental}
	0 \to X^*(\Tbar) \to \ZZ^{\overline{\A}} \to \Pic \overline{X} \to 0,
\end{equation}
where  $\ZZ^{\overline{\A}}$ denotes the free abelian group generated by the elements of $\overline{\A}$.
By duality for tori, we deduce from  (\ref{eqn:fundamental}) the following short exact sequence
\begin{equation}\label{eqn:fundamental_dual}
	0 \to T_{\NS} \to \prod_{\alpha \in \A} T_{\alpha} \to T \to 0.
\end{equation}
Here $T_{\NS}$ denotes the N\'{e}ron-Severi torus of $X$ and $T_{\alpha}=\Res{F_{\alpha}}{F} \Gm$ is the Weil restriction of $\Gm$
with respect to $F \subset F_\alpha$.
Given an element $b \in \Br T$, we may pull it back to obtain an element $b_{\alpha} \in \Br T_{\alpha}$.
Also, for a character $\chi \in \Fourier{T(\Adele_F)}$ we denote by $\chi_\alpha$ its image in $\Fourier{T_\alpha(\Adele_F)}$.
If $\chi$ is automorphic, then we will often identify $\chi_\alpha$ with a Hecke character of $F_\alpha$ via the canonical
isomorphism $T_\alpha(\Adele_F) = \Adele_{ F_\alpha}^*$.
Applying Galois cohomology to (\ref{eqn:fundamental}) we obtain a long exact sequence, the first part of which reads
\begin{equation} \label{seq:toric_Pic}
	0 \to X^*(T) \to \ZZ^{\A} \to \Pic X \to \Pic T \to 0.
\end{equation}
Here we have used the isomorphisms
$$\Pic X \cong (\Pic \overline{X})^{G_F}, \quad \HH^1(F,X^*(\Tbar)) \cong \Pic T, \quad \HH^1(F,\ZZ^{\overline{\A}})=0,$$
which follow from $X(F) \neq \emptyset$,
Lemma \ref{lem:Pic_Br_iso}, Shapiro's lemma and $\HH^1(F_\alpha,\ZZ)=0$.
Next fix an equivariant embedding $T \subset X$ and define $$\Br_e X  = \{ b \in \Br_1 X: b(1)=0\}.$$
Then the long exact sequence associated to (\ref{eqn:fundamental}) continues as
\begin{equation} \label{seq:purity_tori}
	0 \to \Br_e X \to \Br_e T \to \bigoplus_{\alpha \in \A} \Br_e T_\alpha,
\end{equation}
where here we have used the isomorphism $\Br_e T \cong  \HH^2(F,X^*(\Tbar))$
of Lemma \ref{lem:Pic_Br_iso} and the isomorphism $\Br_e X \cong \Br_a X \cong \HH^1(F, \Pic \Xbar)$ of \cite[Lem.~6.3(iii)]{San81}.

\subsubsection{Weak approximation}
A result due to Voskresenski\v{\i} \cite{Vos70} (see also \cite[Prop.~2.34]{Bou11}) implies that the sequence
(\ref{eqn:fundamental_dual}) gives rise to an exact sequence
\begin{equation}\label{eqn:toric_WA}
	\prod_{\alpha \in \A} T_{\alpha}(\Adele_F)/T_\alpha(F) \to T(\Adele_F)/T(F) \to A(T) \to  0,
\end{equation}
where $A(T)$ is given by (\ref{def:A(T)}).
From this we obtain the following exact sequence
\begin{equation} \label{eqn:toric_char_WA}
	0 \to \Fourier{A(T)} \to \Fourier{(T(\Adele_F)/T(F))} \to \prod_{\alpha \in \A} \Fourier{(T_{\alpha}(\Adele_F)/T_\alpha(F))}.
\end{equation}

\subsubsection{Purity}
The Grothendieck purity sequence (\ref{seq:purity0}) here reads
\begin{equation}\label{eqn:toric_purity}
    0 \to \Br_1 X \to \Br_1 T \to \bigoplus_{\alpha \in \A}\HH^1(F_\alpha,\QQ/\ZZ).
\end{equation}

Note that the sequences (\ref{seq:purity_tori}), (\ref{eqn:toric_char_WA}) and (\ref{eqn:toric_purity})
formally resemble  each other.
The following lemma shows that these sequences are indeed compatible.

\begin{lemma} \label{lem:toric_purity}
	The Brauer pairing yields a commutative diagram
	$$
	\xymatrix{
		0 \ar[r] & \Be(T) \ar[r] \ar[d] & \Br_e T \ar[r] \ar[d] & (T(\Adele_F)/T(F))^\sim \ar[d] \ar[r] & 0 \\
	 	& 0 \ar[r] & \bigoplus_{\alpha \in \A} \Br_e T_\alpha \ar[r]  &\prod_{\alpha \in \A} (T_{\alpha}(\Adele_F)/T_\alpha(F))^\sim \ar[r] & 0}
	$$
	with exact rows. Moreover,  the diagram
	$$
	\xymatrix{
		 \Br_e T \ar[d] \ar[r] & \bigoplus_{\alpha \in \A} \Br_e T_\alpha \ar[d] \\
		 \Br_1 T \ar[r] & \bigoplus_{\alpha \in \A} \HH^1(F_\alpha,\QQ/\ZZ) \\}
	$$
	commutes up to sign (the maps are explained in the proof).
\end{lemma}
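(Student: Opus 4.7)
The plan for the first diagram is as follows. The top row is Theorem~\ref{thm:Brauer_global} applied to $T$. For the bottom row, observe that each $T_\alpha = \Res{F_\alpha}{F}\Gm$ is a rational torus, so Corollary~\ref{cor:rational} gives $\Be(T_\alpha)=0$ and an isomorphism $\Br_e T_\alpha \cong (T_\alpha(\Adele_F)/T_\alpha(F))^\sim$ induced by the Brauer pairing; since $\A$ is finite, taking direct sums yields exactness of the bottom row. The middle vertical map is that of \eqref{seq:purity_tori}, i.e.\ pullback of Brauer classes along the surjection $\iota\colon\prod_\alpha T_\alpha \to T$ of \eqref{eqn:fundamental_dual}. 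The right vertical map is the Pontryagin dual of the first map in \eqref{eqn:toric_char_WA}, which is induced by $\iota$ on adele class groups. Commutativity of the right square is then pure functoriality of the Brauer pairing under $\iota$: for $b \in \Br_e T$ with pullbacks $b_\alpha := \iota^*b \in \Br_e T_\alpha$ and $(s_\alpha) \in \prod_\alpha T_\alpha(\Adele_F)$, one has $b(\iota((s_\alpha))) = \sum_\alpha b_\alpha(s_\alpha)$ in $\Br F$.

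For the second diagram, the horizontal maps come from \eqref{seq:purity_tori} and \eqref{eqn:toric_purity}. The left vertical is the inclusion $\Br_e T \hookrightarrow \Br_1 T$. The right vertical $\Br_e T_\alpha \to H^1(F_\alpha, \QQ/\ZZ)$ is the composition
\[ \Br_e T_\alpha \cong H^2(F, X^*(\overline{T_\alpha})) = H^2(F, \Ind_{F_\alpha}^F \ZZ) \cong H^2(F_\alpha, \ZZ) \cong H^1(F_\alpha, \QQ/\ZZ), \]
using Lemma~\ref{lem:Pic_Br_iso}, Shapiro's lemma, and the Bockstein isomorphism coming from $0 \to \ZZ \to \QQ \to \QQ/\ZZ \to 0$ (together with $H^{\geq 1}(F_\alpha, \QQ) = 0$). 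Commutativity of the square up to sign then reduces to the standard compatibility between the Galois-cohomological boundary coming from \eqref{eqn:fundamental} and Grothendieck's geometric residue map of \eqref{eqn:toric_purity}, after the identifications above.

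The hard part will be verifying this last compatibility up to sign. Concretely one can represent a class $b \in \Br_e T$ by a \v{C}ech $2$-cocycle via the isomorphism $\Br_e T \cong H^2(F, X^*(\overline{T}))$ of Lemma~\ref{lem:Pic_Br_iso}, unwind its image under the cohomological boundary of \eqref{eqn:fundamental}, and compare it with $\res_{D_\alpha}(b)$ computed through a uniformizer along $D_\alpha$; in the split case this reduces to the well-known formula that the residue of a cyclic algebra $(\chi, f)$ along a prime divisor $D$ equals $\ord_D(f)\cdot\chi$, and the general case follows by Galois descent along $F_\alpha / F$. The residual sign is unavoidable and depends on one's normalization of the connecting morphism in $0 \to \ZZ \to \QQ \to \QQ/\ZZ \to 0$.
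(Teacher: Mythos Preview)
Your treatment of the first diagram matches the paper's exactly: top row from Theorem~\ref{thm:Brauer_global}, bottom row from Corollary~\ref{cor:rational} applied to each rational torus $T_\alpha$, vertical maps by pullback along~\eqref{eqn:fundamental_dual}, and commutativity by functoriality of the Brauer pairing. Your description of the maps in the second diagram is also the same as the paper's.

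The only divergence is in the commutativity up to sign of the second square. The paper does not argue this directly; it simply cites \cite[Lem.~9.1]{San81}, which establishes precisely this compatibility between the Hochschild--Serre identification $\Br_e T \cong H^2(F,X^*(\Tbar))$ and the geometric residue maps. Your sketch---reduce to the split case via Galois descent and invoke the residue formula $\res_D(\chi,f)=\ord_D(f)\cdot\chi$ for cyclic algebras---is the right intuition and is essentially what underlies Sansuc's argument, but as written it is only a plan, not a proof. Two minor caveats: the map $\Br_e T \to \bigoplus_\alpha \Br_e T_\alpha$ in \eqref{seq:purity_tori} is induced by the inclusion $X^*(\Tbar)\hookrightarrow \ZZ^{\overline{\A}}$ on $H^2$, not a connecting homomorphism as your phrase ``cohomological boundary of~\eqref{eqn:fundamental}'' suggests; and tracking the sign through the Hochschild--Serre edge map and the Bockstein is exactly the delicate bookkeeping that Sansuc carries out. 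Citing \cite[Lem.~9.1]{San81} is the cleaner route here.
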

\begin{proof}
	First note that each $T_\alpha$ is rational, being an open
	subset of affine space. Hence the first commutative diagram follows 
	from the functoriality of the Brauer pairing, Theorem \ref{thm:Brauer_global}
	and Corollary \ref{cor:rational}.
	In the second diagram, the top and bottom rows come from the exact sequences
	 (\ref{seq:purity_tori}) and (\ref{eqn:toric_purity}) respectively.
	The maps between them are the obvious inclusion combined with the
	isomorphism $\Br_e T_\alpha \cong \HH^2(F,X^*(T_\alpha))$ of Lemma
	\ref{lem:Pic_Br_iso} and the isomorphism $\HH^2(F,X^*(T_\alpha)) \cong \HH^1(F_\alpha,\QQ/\ZZ)$
	given by Shapiro's lemma and the definition of $T_\alpha$. The
	fact that this diagram commutes up to sign is proven in \cite[Lem.~9.1]{San81}.
\end{proof}

\subsection{Subordinate Brauer group elements on algebraic tori} \label{sec:sub_tori}
In this section we study subordinate Brauer group elements on algebraic tori
(see \S \ref{Sec:subordinate} for definitions).
Let $T$ be an algebraic torus over a number field $F$ and let $X$ be a toric
variety for $T$ together with a choice of an equivariant embedding $T \subset X$.
Fix a finite subset $\br \subset \Br T$. Let $\Sub_e(X,\br) = \Br_e T \cap \Sub(X,\br)$ and let
$\Sub_e(F(T),\br)=\Br_e T \cap \Sub(F(T),\br)$. 

Our first result is an analogue for algebraic subordinate Brauer group elements 
of the fact that the Brauer-Manin obstruction is the only obstruction to
the Hasse principle and  weak approximation for $T$.
To state this, let $T(F)_\br$ denote the zero-locus of $\br$ (see \eqref{eqn:Brauer_kernel}), and let
$\overline{T(F)}^w_\br$ denote the closure of $T(F)_\br$ in $T(\Adele_F)_\br$
with respect to the product topology.

\begin{theorem} \label{thm:sub_WA}
	Assume that $\br \subset \Br_1 T$. Then 	
    \begin{equation} \label{eqn:closure_br}
        T(\Adele_F)^{\Sub(F(T),\br)}_\br=\overline{T(F)}^w_{\br}.
    \end{equation}
	Moreover, there exists a finite set of places $S$ of $F$ such that
	$$\overline{T(F)}^w_{\br}=\left(\overline{T(F)}^w_{\br} \bigcap 
	\prod_{v \in S} T(F_v)_\br\right) \times \left(T(\Adele_F)_\br \bigcap \prod_{v \not \in S} T(F_v)_\br\right).$$
\end{theorem}
\begin{proof}
	We shall prove the result using Theorem \ref{thm:Brauer_Manin} and
	the tools developed in \S\ref{Sec:subordinate}.
	First note that by Theorem \ref{thm:Sub_Harari}, the pairing
	$$T(\Adele_F)_\br \times \Sub(F(T),\br) \to \QQ/\ZZ,$$
	is continuous with respect to the product topology. Hence
	\begin{equation} \label{eqn:subset}
		\overline{T(F)}^w_{\br} \subset T(\Adele_F)^{\Sub(F(T),\br)}_\br.
	\end{equation}
	Let $\pi:V \to T$ be the product of Brauer-Severi schemes associated to $\br$.
	Note that 
	\begin{equation} \label{eqn:closure}
		\pi(V(F)) = T(F)_\br \quad \text{and} \quad \pi(V(\Adele_F)) = T(\Adele_F)_\br.
	\end{equation}
	Moreover, by Theorem \ref{thm:CTSD} we know that the pull-back of $\Sub(F(T),\br)$
    to $V$ is exactly $\Brnr V$. Hence the funtoriality of the Brauer
    pairing and \eqref{eqn:closure} implies that 
    \begin{equation} \label{eqn:subset2}
        \pi(V(\Adele_F)^{\Brnr V}) = T(\Adele_F)^{\Sub(F(T),\br)}_\br.
    \end{equation}
    Next, by continuity and \eqref{eqn:closure}
    we have $\pi(\overline{V(F)}^w) \subset \overline{T(F)}^w_\br$.
    However, by Theorem \ref{thm:Brauer_Manin}, we know that
    $\overline{V(F)}^w = V(\Adele_F)^{\Brnr V}$. Combining
    this with \eqref{eqn:subset2} and \eqref{eqn:subset}
    proves the first part of the lemma.
	The second part of the lemma then follows from (\ref{eqn:closure_br}), 
	Theorem~\ref{thm:Sub_Harari} and Lemma \ref{lem:Sub_is_finite}.
\end{proof}

Assume now that $\br \subset \Br_e T$.
Note that in the classical case where $\br =0$, the group $A(T)$ 
has an alternative description given by (\ref{eqn:toric_char_WA}).
This description was crucial in \cite{BT95} when calculating the leading
constant in the asymptotic formula (see the proof of \cite[Thm.~3.4.6]{BT95}).
For subordinate
Brauer group elements, however, the analogous 
group $A(T, \br) = T(\Adele_F)_{\br}/\overline{T(F)}^w_{\br}$ 
does not play this r\^{o}le. To state
the result let $\R \subset (T(\Adele_F)/T(F))^{\sim}$
denote the collection of characters obtained from $\br$ via Theorem \ref{thm:Brauer_global}.
Let $\br_\alpha \subset \Br_e T_\alpha$ and $\R_\alpha \subset (T_\alpha(\Adele_F)/T_\alpha(F))^{\sim}$
for $\alpha \in \A$ denote the corresponding elements obtained via pull-back in the sequence
(\ref{eqn:fundamental_dual}).

\begin{theorem} \label{thm:sub_C}
    Assume that $\br \subset \Br_e T$. Let
    $$C(T,\R) = \{ \chi \in \Fourier{(T(\Adele_F)/T(F))} : \chi_\alpha \in \R_\alpha \text{ for all } \alpha \in \A\}.$$
	Then $C(T,\R)$ is finite and the Brauer pairing induces a short exact sequence
    $$0 \to \Be(T) \to \Sub_e(X,\br) \to C(T,\R) \to 0.$$
\end{theorem}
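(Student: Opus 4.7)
The plan is to deduce the result from Lemma \ref{lem:toric_purity} and Theorem \ref{thm:Brauer_global} by a diagram chase. First I would re-interpret the subordinate condition in terms of the pullback maps to the $\Br_e T_\alpha$. Since $\br \subset \Br_e T \subset \Br_1 T$, the purity sequence \eqref{eqn:toric_purity} shows that both $\res_D(\br)$ and $\res_D(b)$ vanish automatically for every $b \in \Br_e T$ and every $D \in X^{(1)}$ not in the toric boundary, so the subordinate condition is non-trivial only at the boundary divisors $D_\alpha$. By the second diagram of Lemma \ref{lem:toric_purity}, the residue $\res_{D_\alpha}$ agrees, up to a sign that is irrelevant for subgroup membership, with the pullback $\Br_e T \to \Br_e T_\alpha$ post-composed with the Shapiro isomorphism $\Br_e T_\alpha \cong H^1(F_\alpha,\QQ/\ZZ)$. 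Hence $b \in \Sub_e(X,\br)$ if and only if the image of $b$ in $\Br_e T_\alpha$ lies in $\langle \br_\alpha\rangle$ for every $\alpha \in \A$.

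Next I would transfer this to a character condition using the first commutative diagram of Lemma \ref{lem:toric_purity}. Since $T_\alpha = \Res{F_\alpha}{F}\Gm$ is rational, Corollary \ref{cor:rational} yields $\Be(T_\alpha)=0$, so the horizontal arrow $\Br_e T_\alpha \to \Fourier{(T_\alpha(\Adele_F)/T_\alpha(F))}$ of Theorem \ref{thm:Brauer_global} is an isomorphism sending $\br_\alpha$ onto $\R_\alpha$. The diagram therefore shows that $\Sub_e(X,\br)$ is precisely the preimage of
$$C(T,\R) = \{\chi \in \Fourier{(T(\Adele_F)/T(F))} : \chi_\alpha \in \R_\alpha \text{ for all } \alpha \in \A\}$$
under the map $\Br_e T \to (T(\Adele_F)/T(F))^\sim$ of Theorem \ref{thm:Brauer_global}. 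Restricting the top exact row of that theorem to this preimage, and using the surjectivity supplied by Theorem \ref{thm:Brauer_global}, delivers the asserted short exact sequence
$$0 \to \Be(T) \to \Sub_e(X,\br) \to C(T,\R) \to 0.$$

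Finally, finiteness of $C(T,\R)$ follows from the exact sequence \eqref{eqn:toric_char_WA}
$$0 \to \Fourier{A(T)} \to \Fourier{(T(\Adele_F)/T(F))} \to \prod_{\alpha \in \A}\Fourier{(T_\alpha(\Adele_F)/T_\alpha(F))},$$
since $A(T)$ is finite and $C(T,\R)$ is the preimage of the finite subgroup $\prod_\alpha \R_\alpha$ (each $\R_\alpha$ being the image of the finite set $\br$); the same exact sequence forces every $\chi \in C(T,\R)$ to have finite order, so the lifts through Theorem \ref{thm:Brauer_global} indeed exist. The entire proof is thus a formal diagram chase; the only delicate point is the sign ambiguity in identifying $\res_{D_\alpha}$ with the pullback to $\Br_e T_\alpha$, but this is invisible at the level of subgroup membership, so no genuine obstacle arises. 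As a sanity check, specialising to $\br = 0$ recovers the exact sequence \eqref{seq:Br_Sha} of Voskresenski\v{\i}.
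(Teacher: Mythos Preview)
Your proof is correct and follows essentially the same route as the paper's: both arguments hinge on the equivalence
\[
b \in \Sub_e(X,\br) \iff b_\alpha \in \langle\br_\alpha\rangle \text{ for all }\alpha \iff \chi_\alpha \in \R_\alpha \text{ for all }\alpha,
\]
obtained from Lemma~\ref{lem:toric_purity}, and then invoke Theorem~\ref{thm:Brauer_global} to identify the kernel with $\Be(T)$. The only minor difference is in how finiteness of $C(T,\R)$ (and the fact that its elements have finite order, needed for surjectivity via Theorem~\ref{thm:Brauer_global}) is established: the paper first runs a small diagram chase to show each $\chi \in C(T,\R)$ has finite order, and then deduces finiteness from Lemma~\ref{lem:Sub_is_finite} via the already-established exact sequence, whereas you extract both facts directly from the sequence \eqref{eqn:toric_char_WA} and the finiteness of $A(T)$ and of each $\R_\alpha$. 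Both approaches are valid and equally short.
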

\begin{proof}
	First note that, by \eqref{eqn:toric_WA}, the image of the natural map
	$$\prod_{\alpha \in \A} T_{\alpha}(\Adele_F)/T_\alpha(F) \to T(\Adele_F)/T(F)$$
	is closed of finite index. As each $\R_\alpha \subset (T_\alpha(\Adele_F)/T_\alpha(F))^{\sim}$,
	we deduce that $C(T,\R) \subset (T(\Adele_F)/T(F))^\sim$.
    Therefore, by Theorem \ref{thm:Brauer_global}, we know that $C(T,\R)$
	lies in the image of the map $\epsilon: \Br_e T \to (T(\Adele_F)/T(F))^{\sim}$. We claim
	that $\epsilon$ restricts to a surjection $\Sub_e(X,\br) \to C(T,\R)$.
	Let $b \in \Br_e T$ with corresponding character $\chi \in (T(\Adele_F)/T(F))^{\sim}$.
    Then Lemma \ref{lem:toric_purity} implies that
	\begin{align*}
		b \in \Sub_e(X,\br) &\iff \res_{D_\alpha}(b) \in \langle \res_{D_\alpha}(\br) \rangle \text{ for all } \alpha \in \A, \\
		&\iff b_\alpha \in \br_\alpha \text{ for all } \alpha \in \A, \\
		&\iff \chi_\alpha \in \R_\alpha \text{ for all } \alpha \in \A, \\
		& \iff \chi \in C(T,\R),
	\end{align*}
	thus proving the claim. The finiteness of $C(T,\R)$ therefore follows from Lemma \ref{lem:Sub_is_finite}.
	The equality $\ker(\Sub_e(X,\br) \to C(T,\R)) = \Be(T)$ now follows from Theorem \ref{thm:Brauer_global}.
\end{proof}
Note that we recover (\ref{seq:Br_Sha}) from Theorem \ref{thm:sub_C} on taking $\br=0$
and using (\ref{eqn:toric_char_WA}).

\subsection{Heights} \label{sec:Heights}
Let $X$ be a projective variety over a number field $F$ and let $L$ be a line bundle on $X$. For a place $v \in \Val(F)$,
a \emph{$v$-adic metric} $|| \cdot ||_v$ on $L$ is a continuously varying family of $v$-adic norms
on the fibres of $L$. An \emph{adelic metric} on $L$ is a collection $|| \cdot||=(||\cdot||_v)$ of
$v$-adic metrics on $L$ for each place $v \in \Val(F)$, such that almost all of these $v$-adic metrics are defined
by a fixed model of $X$ over $\OO_F$ (see e.g.~\cite{CLT10}).
We call the data $\LL=(L,||\cdot||)$ an adelically metrised line bundle.
Given a rational point $x \in X(F)$, we define the height of $x$ with respect to $\LL$ to be
$$H_{\LL}(x)=\prod_{v \in \Val(F)} ||\ell(x)||_v^{-1},$$
where $\ell$ is any local section of $L$ defined at $x$ such that $\ell(x) \neq 0$.

\subsubsection{Heights on toric varieties}
Now suppose that $X$ is a toric variety for an algebraic torus $T$ with a fixed
equivariant embedding $T \subset X$. We shall extend the previous
construction in two ways; namely we define the ``height'' of an adelic point and
form a system of complex  height functions on every line bundle of $T$
in a compatible manner. Choose adelic metrics $||\cdot||_\alpha$ on the line bundles $\OO_X(D_\alpha)$
for each $\alpha \in \A$ (see \S \ref{sec:toric_varieties}). Let $d_\alpha$ denote the
global section of $\OO_X(D_\alpha)$ corresponding to the divisor $D_\alpha$.
We then define the following local height pairing
\begin{align*}
	&H_v: T(F_v) \times \CC^\A \to \CC^*, \quad
	 ( t_v; \sbf) \mapsto \prod_{\alpha \in \A}||d_\alpha (t_v)||_{\alpha,v}^{-s_\alpha},
\end{align*}
and also the following global height pairing
\begin{align*}
	&H: T(\Adele_F) \times \CC^\A \to \CC^*, \quad
	( (t_v); \sbf) \mapsto \prod_{v \in \Val(F)} H_v((t_v); \sbf).
\end{align*}
Here we write $\sbf = (s_\alpha)_{\alpha \in \A}$.
As an example, from (\ref{eqn:canonical_bundle})  the adelic metrics on
$\OO_X(D_\alpha)$ induce an adelic metric on $\omega_X^{-1}$, and on taking $s_\alpha=s$ for each $\alpha \in \A$
we have
$$H(t; (s)_{\alpha \in \A})=H_{\omega_X^{-1}}(t)^s,$$
for each $t \in T(F)$. Batyrev and Tschinkel \cite[Def.~2.1.5]{BT95}
constructed ``canonical'' adelic metrics on the line bundles
$\OO_X(D_\alpha)$. In this paper we shall focus on these adelic metrics, as it greatly simplifies the harmonic
analysis in the proof of Theorem \ref{thm:Brauer}. The key property of these metrics is that
the corresponding local height functions are $T(\OO_v)$-invariant and trivial on $T(\OO_v)$
for every place $v$ of $F$ \cite[Thm.~2.16]{BT95}.

\section{Counting functions associated to Brauer group elements} \label{sec:counting}
\subsection{The set-up}
We now begin the proof of Theorem \ref{thm:Brauer}. Throughout this section $X$ is a toric variety over a
number field $F$ with respect to an algebraic torus $T$, with set of boundary components $\A$
(see \S \ref{sec:toric_varieties}).
We only specialise to the case where $T$ is anisotropic just before we apply the
Poisson summation formula in \S \ref{Sec:anisotropic_case}.
We fix a finite subgroup $\br \subset \Br_1 U$ of the open dense orbit $U \subset X$ such that $U(F)_\br \neq \emptyset$.
We fix a choice of equivariant embedding $T \subset X$ such that $1 \in U(F)_\br$.
In particular we may identify $\br$ with a finite subgroup of $ \Br_e T$ (see (\ref{def:Br_e})).
We also equip the line bundles $\OO_X(D_\alpha)$ with the canonical Batyrev-Tschinkel adelic
metrics (see \S \ref{sec:Heights}).

Let $\thorn:T(F) \to \{0,1\}$ be the indicator function of the zero-locus $T(F)_\br$ of $\br$.
This extends to a locally constant function (also denoted $\thorn$) on $T(\Adele_F)$,
given as a product of local indicator functions $\thorn_v$ (see \S\ref{sec:Br_fields}).
Let $\R$ be the group of
automorphic characters of $T(\Adele_F)$ associated to $\br$ via Theorem \ref{thm:Brauer_global}.
By character orthogonality we have
\begin{equation} \label{eqn:thorn_rho}
	\thorn_{v} = \frac{1}{|\R|}\sum_{\rho \in \R} \rho_{v},
\end{equation}
as $\thorn_{v}$ is the indicator function for $\bigcap_{\rho \in \R} \ker \rho_v$.
Note that $\thorn$ is \emph{not} the indicator function of
$\bigcap_{\rho \in \R}\ker \rho \subset T(\Adele_F)$ in general; indeed
$T(F) \subset \bigcap_{\rho \in \R}\ker \rho$, as each $\rho$ is automorphic.

Using (\ref{eqn:fundamental_dual}), we may pull-back $\br$ to obtain a collection of Brauer group elements $\br_\alpha \in \Br_e T_\alpha$ for each $\alpha \in \A$.
Let $\thorn_\alpha:T_\alpha(\Adele_F) \to \{0,1\}$ denote the indicator function
of $T_\alpha(\Adele_F)_{\br_\alpha}$. We denote the residue map associated to the divisor $D_\alpha$
by $\res_\alpha$.
We may pull-back characters $\chi$ of $T(\Adele_F)$ to obtain characters $\chi_\alpha$ of
$T_\alpha(\Adele_F)$. For automorphic $\chi$, we identify these with Hecke characters of $F_\alpha$
using the identification $T_\alpha(\Adele_F)=\Adele^*_{F_\alpha}$.
We let $\R_\alpha$ denote the collection of characters on $T_{\alpha}(\Adele_F)$ induced by the $\br_{\alpha}$
via Theorem \ref{thm:Brauer_global}. These coincide with the pull-back of $\R$ to $T_\alpha(\Adele_F)$, 
as the Brauer pairing is functorial.
We identify those places of $F_\alpha$ which lie above a fixed non-archimedean place $v$ of $F$,
with those elements $\alpha_v \in \A_v$
such that $\alpha_v \mid \alpha$. With respect to the identification
$T_\alpha(\Adele_F)=\Adele^*_{F_\alpha}$, we may therefore write each
$\thorn_\alpha$ (resp.~$\chi_\alpha$) as a product of local factors $\thorn_{\alpha_v}$
(resp.~$\chi_{\alpha_v}$) for each place $v$ of $F$ and each $\alpha_v \mid \alpha$.

We fix a finite set $S \subset \Val(F)$ containing all archimedean places,
all places which are ramified in the splitting field of $T$,
all places $v$ for which $\rho_v$ is ramified for some $\rho \in \R$
and all places $v$ for which $\vol(\mathcal{O}_v) \neq 1$ with respect
to our Haar measure on $F_v$.

\subsection{The height zeta function and the Poisson summation formula}
Throughout $\sbf=(s_\alpha) \in \CC^{\A}$ denotes a complex variable.
Given $c \in \RR$, we say that $\re \sbf > c$
if $\sbf$ lies in the complex tube domain
$\{\sbf \in \CC^\A:\re s_\alpha > c \text{ for all } \alpha \in \A\}.$
The generating series for the rational points of interest is the following height zeta function
\begin{equation} \label{def:HZF}
	Z(\sbf) = \sum_{t \in T(F)} \thorn(t)H(t;-\sbf) = \sum_{t \in T(F)_\br} H(t;-\sbf).
\end{equation}
We shall often be able to reduce the study of this zeta function to the works of \cite{BT95} and \cite{BT98}.
For example since $|\thorn(t)| \leq 1$ for each $t \in T(F)$,
we deduce immediately from \cite[Thm.~4.2]{BT98} 
that the sum in (\ref{def:HZF}) converges absolutely on the tube domain $\re \sbf >1$,
and hence defines a holomorphic function on this domain.

The key tool in the study of $Z(\sbf)$ is the Poisson summation formula.
Let $f:T(\Adele_F) \to \CC$ be a continuous function which is given as a product of local factors $f_v$
such that $f_v(\OO_v)=1$ for almost all places $v$ of $F$.
We define the Fourier transform of a character $\chi \in \Fourier{T(\Adele_F)}$ with respect to $f$ to be
$$ \widehat{H}(f,\chi;-\sbf) = \int_{T(\Adele_F)} f(t) \chi(t) H(t;-\sbf) \mathrm{d} \mu,$$
for those $\sbf \in \CC^\A$ for which the integral exists.
Our assumptions on $f$ imply that the Fourier transform decomposes as a product of local Fourier transforms
$$ \widehat{H}(f,\chi;-\sbf)= \prod_{v \in \Val(F)}\widehat{H}_v(f_v,\chi_v;-\sbf),$$
where
$$\widehat{H}_v(f_v,\chi_v;-\sbf) = \int_{T(F_v)} f_v(t_v) \chi_v(t_v) H_v(t_v;-\sbf) \mathrm{d} \mu_v.$$
A formal application of the Poisson summation formula (see \cite[Thm.~4.4]{BT98}, \cite[\S3.5]{Bou11}) gives
\begin{equation} \label{eqn:Poisson}
	Z(\sbf)= \frac{1}{(2\pi)^{\rank X^*(T)}\vol(T(\Adele_F)^1/T(F))}
	\int_{\chi \in \Fourier{\left(T(\Adele_F)/T(F)\right)}}\widehat{H}(\thorn,\chi;-\sbf) \mathrm{d} \chi.
\end{equation}
Note that $T(\Adele_F) = T(\Adele_F)^1$ when $T$ is anisotropic by (\ref{seq:adelic_splitting}),
hence $T(\Adele_F)/T(F)$ is compact. Thus in this case $\Fourier{\left(T(\Adele_F)/T(F)\right)}$ is discrete and therefore the
above integral is really a \emph{sum} over characters.

\subsection{The local Fourier transforms}
We begin by studying the local Fourier transforms. By
(\ref{eqn:thorn_rho}), for any place $v$ of $F$  we have
\begin{equation} \label{eqn:Fourier_thorn_rho}
	\widehat{H}_v(\thorn_v,\chi_v;-\sbf) = \frac{1}{|\R|}\sum_{\rho \in \R} \widehat{H}_v(1,\rho_v\chi_v;-\sbf).
\end{equation}
We shall use this to reduce the study of the local Fourier transforms
to the work of Batyrev and Tschinkel \cite{BT95, BT98}, which corresponds
to the case  $\R = 1$.

\begin{lemma} \label{lem:local_Fourier}
	Let $v \in \Val(F)$ and let $\varepsilon >0$. 
	Let $\chi_v$ be a character of $T(F_v)$. Then the local Fourier transform
	$\widehat{H}_v(\thorn_v,\chi_v;-\sbf)$ converges absolutely and is uniformly bounded (in terms of $\varepsilon$ and $v$)
	on the tube domain $\re \sbf \geq 1/2+ \varepsilon$.
\end{lemma}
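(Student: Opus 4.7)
The plan is to exploit the decomposition \eqref{eqn:Fourier_thorn_rho}, which writes $\widehat{H}_v(\thorn_v,\chi_v;-\sbf)$ as a finite sum of $|\R|$ terms of the form $\widehat{H}_v(1,\rho_v\chi_v;-\sbf)$. Since $|\R| = |\br|$ is finite and does not depend on $\sbf$, absolute convergence and a uniform bound (in $\sbf$) for each summand imply the same for the sum. This reduces the lemma to the case $\thorn_v = 1$, which is precisely the local Fourier transform appearing in the harmonic analysis of Batyrev and Tschinkel.

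For a non-archimedean place $v$, the key input is that the canonical Batyrev-Tschinkel adelic metrics make the local height $H_v(\cdot;-\sbf)$ invariant under $T(\OO_v)$ and trivial on $T(\OO_v)$ (as recorded in \S\ref{sec:Heights}). Hence the Fourier transform factors through the quotient $T(F_v)/T(\OO_v) \hookrightarrow X_*(T_v)$ given by \eqref{seq:local_finite}, and I would evaluate it by summing the contribution of each cocharacter. A direct calculation, carried out in \cite[\S 2.2]{BT95} and \cite[\S 4]{BT98}, expresses the resulting series as a finite sum of geometric series whose denominators are factors of the form $1-\eta_{\alpha_v}(\pi_{\alpha_v})q_{\alpha_v}^{-s_\alpha}$, where $\eta$ ranges over a finite set of characters built from $\rho_v\chi_v$ and $\alpha_v \mid \alpha$. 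On the tube $\re \sbf \geq 1/2+\varepsilon$ each such factor is bounded away from zero, giving absolute convergence of the defining integral and a uniform bound depending only on $v$ and $\varepsilon$.

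For an archimedean place $v$ the plan is to split $T(F_v)$ using \eqref{seq:local_infinite}, integrate first over the compact subgroup $T(\OO_v)$ (which is harmless, as $H_v$ is invariant and $\chi_v$ contributes at most the volume), and then over $X_*(T_v)_\RR$. The latter integral is one of the archimedean Fourier integrals computed explicitly in \cite[\S 2.3]{BT95} (see also \cite[Lem.~4.37]{Bou11}); an elementary estimate on the piecewise-linear function giving the logarithm of the height shows that the integrand decays rapidly on $X_*(T_v)_\RR$ once $\re s_\alpha$ is bounded below by a positive constant, yielding absolute convergence and a uniform bound on $\re \sbf \geq 1/2+\varepsilon$.

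There is no real obstacle: once the character-orthogonality identity \eqref{eqn:Fourier_thorn_rho} is invoked, the lemma is a direct citation of the Batyrev-Tschinkel local computation together with the finiteness of $\R$. The only mild point of care is the archimedean integration, where one must check that the convergence is governed by $\re\sbf$ rather than $|\sbf|$, and that the dependence on $v$ (which enters only through the finite set of metrics and the action of a compact group) is absorbed into the allowed constant.
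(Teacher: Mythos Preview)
Your proposal is correct and reaches the same destination as the paper, namely a reduction to the Batyrev--Tschinkel local estimates (\cite[Rem.~2.2.8]{BT95} and \cite[Prop.~2.3.2]{BT95}). The paper's argument is marginally more direct: rather than invoking the character decomposition \eqref{eqn:Fourier_thorn_rho}, it simply observes that $|\thorn_v\chi_v|\leq 1$ pointwise, so the absolute value of the integrand is dominated by $H_v(t_v;-\re\sbf)$, and hence $|\widehat{H}_v(\thorn_v,\chi_v;-\sbf)|\leq \widehat{H}_v(1,1;-\re\sbf)$, which is exactly the quantity controlled by the cited results. Your route via \eqref{eqn:Fourier_thorn_rho} works just as well but introduces an unnecessary intermediate step; the trivial bound already gives uniformity in both $\chi_v$ and $\sbf$ in one stroke.
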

\begin{proof}
	By \eqref{eqn:Fourier_thorn_rho} it suffices to prove the corresponding result for 
	$\widehat{H}_v(1,\rho_v\chi_v;-\sbf)$. This is the content of \cite[Rem.~2.2.8]{BT95}
	and \cite[Prop.~2.3.2]{BT95}.
\end{proof}

We also show that the local Fourier transform of the trivial character does not vanish.

\begin{lemma}\label{lem:trivial_non_vanishing}
	Let $v \in \Val(F)$. Then	$\widehat{H}_v(\thorn_v,1;-\sbf)$
	is non-zero for any $\sbf \in \RR_{>1/2}^\A$.
\end{lemma}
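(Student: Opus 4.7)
The plan is to prove positivity directly: on the real locus $\sbf \in \RR_{>1/2}^\A$, the integrand defining $\widehat{H}_v(\thorn_v,1;-\sbf)$ is a non-negative real function which is strictly positive on an open set of positive Haar measure.

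First I would observe that the local height
$$H_v(t_v;-\sbf) = \prod_{\alpha \in \A} \|d_\alpha(t_v)\|_{\alpha,v}^{s_\alpha}$$
is real and strictly positive for real $\sbf$, because each local norm $\|d_\alpha(t_v)\|_{\alpha,v}$ is a positive real number whenever $d_\alpha(t_v) \neq 0$, i.e.\ whenever $t_v \in T(F_v)$. Since $\thorn_v$ takes values in $\{0,1\}$, the integrand $\thorn_v(t_v) H_v(t_v;-\sbf)$ is a non-negative real function on $T(F_v)$. By Lemma \ref{lem:local_Fourier}, the defining integral converges absolutely for $\re \sbf \geq 1/2 + \varepsilon$, so $\widehat{H}_v(\thorn_v,1;-\sbf)$ is a well-defined non-negative real number on $\RR_{>1/2}^\A$.

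Next I would exhibit a non-empty open subset of $T(F_v)$ on which the integrand is strictly positive. Each $b \in \br$ induces a locally constant evaluation map $T(F_v) \to \Br F_v$, so the common zero-locus $T(F_v)_{\br_v}$, of which $\thorn_v$ is the indicator function, is open in $T(F_v)$. Moreover, it is non-empty: the setup of \S 5.1 fixes the equivariant embedding $T \subset X$ so that $1 \in U(F)_{\br}$, and the inclusion $T(F)_\br \subset T(F_v)_{\br_v}$ then guarantees $1 \in T(F_v)_{\br_v}$. Being non-empty and open, this set has positive Haar measure $\mu_v$.

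Combining the two paragraphs, the integrand is non-negative, is strictly positive on a set of positive measure (namely $T(F_v)_{\br_v}$, where $\thorn_v = 1$ and $H_v(\cdot;-\sbf) > 0$), so its integral is strictly positive. There is no real obstacle here; the only subtle point is the openness and non-emptiness of $T(F_v)_{\br_v}$, both of which are built into our running assumptions together with the local constancy of the Brauer pairing.
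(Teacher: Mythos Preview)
Your proof is correct and follows essentially the same approach as the paper. Both arguments show the integrand is non-negative real for $\sbf \in \RR_{>1/2}^\A$, then exhibit a neighbourhood of $1$ on which $\thorn_v = 1$ (you phrase this as $T(F_v)_{\br_v}$ being open and containing $1$; the paper says $\thorn_v$ is locally constant with $\thorn_v(1)=1$ and picks a compact neighbourhood $C_v$), and conclude the integral is strictly positive.
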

\begin{proof}
	As $\thorn_v$ is locally constant with $\thorn_v(1)=1$ and $T(F_v)$ is locally compact, there exists a
	compact neighbourhood $C_v$ of $1$ such that $\thorn_v(C_v) =1$.
	For $\sbf \in \RR_{>1/2}^\A$, we have
	\begin{equation*}
		\widehat{H}_v(\thorn_v,1;-\sbf) \geq \int_{C_v} H_v(t_v;-\sbf) \mathrm{d} \mu_v > 0. \qedhere
	\end{equation*}
\end{proof}

\subsubsection{The non-archimedean places}
We now show that our height functions and $\thorn$ are well-behaved at the non-archimedean places
from a harmonic analysis perspective.
\begin{lemma}\label{lem:thorn_invariant_finite}
	Let $v \in \Val(F)$ be non-archimedean. Let $K_v \subset T(\OO_v)$ be
	the maximal subgroup for which $H_v(\cdot;-\sbf)$ and $\thorn_v$ are both
	$K_v$-invariant and trivial on $K_v$. Then $K_v$ is compact, open
	and of finite index.
    Moreover when $v \not \in S$, one has $K_v= T(\OO_{v})$.
\end{lemma}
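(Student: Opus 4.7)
The plan is to exploit the two special features of the set-up: the Batyrev-Tschinkel canonical local heights $H_v(\cdot;-\sbf)$ are $T(\OO_v)$-invariant and trivial on $T(\OO_v)$ (as recalled at the end of \S \ref{sec:Heights}), and $\thorn_v$ admits the character decomposition \eqref{eqn:thorn_rho}. The first fact means that the two conditions imposed on $K_v$ by the height function are automatically satisfied by any subgroup of $T(\OO_v)$, so the constraints cutting out $K_v$ reduce to constraints on $\thorn_v$ alone.

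Next, since $\thorn_v$ is the indicator function of the closed subgroup $\bigcap_{\rho \in \R} \ker \rho_v$ of $T(F_v)$, the requirement that $\thorn_v$ be $K_v$-invariant and identically $1$ on $K_v$ is equivalent to the single containment $K_v \subset \bigcap_{\rho \in \R} \ker \rho_v$; the invariance is then automatic because each $\rho_v$ is a group homomorphism. I would therefore identify
\[
	K_v \;=\; T(\OO_v) \,\cap\, \bigcap_{\rho \in \R} \ker \rho_v.
\]

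For $v \notin S$ our choice of $S$ guarantees that each $\rho_v$ is unramified, i.e.\ trivial on $T(\OO_v)$, so the intersection collapses to $T(\OO_v)$ itself, giving the ``moreover'' clause. For general non-archimedean $v$ the key input is that $T(\OO_v)$ is a profinite group, being compact and totally disconnected in the $v$-adic topology, so every continuous character of $T(\OO_v)$ factors through a finite quotient. Consequently each restriction $\rho_v|_{T(\OO_v)}$ has finite image, making $\ker \rho_v \cap T(\OO_v)$ an open subgroup of finite index. Since $\R$ is finite, the intersection over $\rho \in \R$ preserves openness and finite index, and $K_v$ is closed in the compact group $T(\OO_v)$, hence compact.

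The argument is essentially routine; the only non-trivial ingredient is the profiniteness of $T(\OO_v)$, which is standard and follows for instance from the filtration by congruence subgroups. No genuine obstacle is anticipated.
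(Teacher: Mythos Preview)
Your proof is correct and follows essentially the same approach as the paper, which simply cites \eqref{eqn:thorn_rho} for $\thorn_v$ and the Batyrev--Tschinkel height properties for $H_v$; you have merely filled in the details the paper leaves implicit. One minor remark: since $\R$ is a finite \emph{group}, each $\rho_v$ already has finite order, so $\ker\rho_v$ is open in $T(F_v)$ directly and the profiniteness of $T(\OO_v)$, while correct, is more than you need.
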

\begin{proof}
	The result for $\thorn_v$ follows from (\ref{eqn:thorn_rho}).
	The result for $H_v(\cdot;-\sbf)$ follows from 
	our choice of height functions (see \S \ref{sec:Heights}).	
\end{proof}

\begin{lemma} \label{lem:finite_ramified_vanishing}
	Let $v \in \Val(F)$ be non-archimedean and let $\chi_v$ be a character of $T(F_v)$ which is non-trivial
	on $K_v$. Then
	$$\widehat{H}_v(\thorn_v,\chi_v;-\sbf)=0.$$
\end{lemma}
\begin{proof}
	Applying character orthogonality, we obtain
	$$
		\widehat{H}_v(\thorn_v,\chi_v;-\sbf) 
		=\sum_{n_v \in T(F_v)/K_v}\thorn_v(n_v) \chi_v(n_v)  H_v(n_v;-\sbf) \int_{K_v} \chi_v(t_v) \mathrm{d} \mu_v
		=0,
	$$
	as required.
\end{proof}

\begin{remark}
	Lemma \ref{lem:finite_ramified_vanishing} 
	plays a key r\^{o}le in showing the convergence of the Poisson integral.
	This is an important part of the paper where we use the fact that our Brauer group elements
	are \emph{algebraic}, as the conclusion of Lemma \ref{lem:thorn_invariant_finite}
	does not hold for \emph{transcendental} Brauer group elements in general. 
	For example, consider the quaternion algebra $(x,y)$ on $\Gm^2 \subset \mathbb{A}^2$ over $\QQ$.
	A simple Hilbert symbol computation shows that for $p >2$, the corresponding indicator function is
	$(1 + p\ZZ_p)^2$-invariant, but not $(\ZZ_p^* \times \ZZ_p^*)$-invariant.
\end{remark}

We now obtain more explicit information about the local Fourier transforms for places not in $S$. In what follows,
for $\sbf \in \CC^\A$ and $\alpha_v \in \A_v$, the symbol $s_{\alpha_v}$ denotes the complex number $s_\alpha$, where $\alpha \in \A$
is the unique element such that $\alpha_v \mid \alpha$ (see \S \ref{sec:toric_varieties}).
\begin{lemma} \label{lem:good_Fourier}
	Let $\varepsilon >0$, let $v \not \in S$ be a place of $F$ and
	let $\chi_v$ be an unramified character of $T(F_v)$.
	Then on the tube domain $\re \sbf \geq 1/2 + \varepsilon$ we have
	$$\widehat{H}_v(\thorn_v,\chi_v;-\sbf)=
	\prod_{\alpha_v \in \A_v}\left(1-\frac{\thorn_{\alpha_v}(\pi_{\alpha_v})\chi_{\alpha_v}(\pi_{\alpha_v})}{q_v^{f_{\alpha_v}s_{\alpha_v}}}\right)^{-1}
	\left(1 + O_\varepsilon\left(\frac{1}{q_v^{1+\varepsilon}}\right)\right).$$
\end{lemma}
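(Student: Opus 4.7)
The plan is to reduce to the trivial-indicator case via the decomposition \eqref{eqn:Fourier_thorn_rho}, invoke the standard Batyrev--Tschinkel formula for the local Fourier transform of the canonical height at an unramified place, and then reassemble the result using a second application of character orthogonality at the level of each boundary factor. Concretely, by \eqref{eqn:Fourier_thorn_rho} we have
\[
\widehat{H}_v(\thorn_v,\chi_v;-\sbf)=\frac{1}{|\R|}\sum_{\rho\in\R}\widehat{H}_v(1,\rho_v\chi_v;-\sbf),
\]
and since $v\notin S$ every character $\rho_v\chi_v$ is unramified. For such characters, the Batyrev--Tschinkel computation of the local Fourier transform of the canonical height (see \cite[Thm.~2.2.6]{BT95}, or \cite{Bou11} for a detailed treatment) yields an Euler product
\[
\widehat{H}_v(1,\eta_v;-\sbf)=\prod_{\alpha_v\in\A_v}\left(1-\frac{\eta_{\alpha_v}(\pi_{\alpha_v})}{q_v^{f_{\alpha_v}s_{\alpha_v}}}\right)^{-1}\bigl(1+O_\varepsilon(q_v^{-1-\varepsilon})\bigr),
\]
uniformly in unramified $\eta_v$ and in $\sbf$ on the tube domain $\re\sbf\geq 1/2+\varepsilon$.

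Substituting $\eta_v=\rho_v\chi_v$ and averaging over $\R$, I would expand each Euler factor as a geometric series. Only terms of total size $\gg q_v^{-1-\varepsilon}$ need to be tracked explicitly on the domain $\re\sbf\geq 1/2+\varepsilon$: namely the constant term and, for each $\alpha_v\in\A_v$, the linear contribution
\[
\frac{1}{|\R|}\sum_{\rho\in\R}\rho_{\alpha_v}(\pi_{\alpha_v})\chi_{\alpha_v}(\pi_{\alpha_v})\cdot q_v^{-f_{\alpha_v}s_{\alpha_v}}.
\]
Here the key combinatorial identity is that the natural map $\R\to\R_{\alpha_v}$ obtained by pulling characters back along $T_{\alpha_v}\to T$ has fibres of constant size $|\R|/|\R_{\alpha_v}|$, so that
\[
\frac{1}{|\R|}\sum_{\rho\in\R}\rho_{\alpha_v}(\pi_{\alpha_v})=\frac{1}{|\R_{\alpha_v}|}\sum_{\sigma\in\R_{\alpha_v}}\sigma(\pi_{\alpha_v})=\thorn_{\alpha_v}(\pi_{\alpha_v}),
\]
by character orthogonality on the finite quotient of $T_{\alpha_v}(F_v)$ cut out by $\R_{\alpha_v}$. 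Expanding the right-hand side of the claimed formula in the same way produces the same leading terms, and hence the two expressions agree up to the claimed error.

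Finally, for the error analysis one observes that all omitted terms—products of two or more factors $q_v^{-f_{\alpha_v}s_{\alpha_v}}$, higher geometric-series corrections, and the Batyrev--Tschinkel error factor itself—are uniformly $O_\varepsilon(q_v^{-1-\varepsilon})$ on $\re\sbf\geq 1/2+\varepsilon$, and the averaging over $\R$ does not worsen this bound since $|\R|$ is a fixed finite constant. The main obstacle is essentially bookkeeping: matching the leading linear terms on both sides via the orthogonality identity above and checking that the (already uniform) Batyrev--Tschinkel error propagates through the averaging step; once this is done, the rest is a routine geometric-series expansion.
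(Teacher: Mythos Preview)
Your proposal is correct and follows essentially the same route as the paper: both arguments invoke \eqref{eqn:Fourier_thorn_rho} to reduce to the trivial-indicator case, apply the Batyrev--Tschinkel local Fourier computation (the paper cites \cite[Thm.~3.1.3]{BT95}), expand to first order, and then use character orthogonality to collapse $\frac{1}{|\R|}\sum_{\rho\in\R}\rho_{\alpha_v}(\pi_{\alpha_v})$ to $\thorn_{\alpha_v}(\pi_{\alpha_v})$. Your fibre-counting justification for this last identity is a slight elaboration of what the paper records in one line; note only that the paper's notation is $\R_\alpha$ (the image of $\R$ in the characters of $T_\alpha(\Adele_F)$) rather than $\R_{\alpha_v}$, and the relevant orthogonality is the analogue of \eqref{eqn:thorn_rho} for $T_\alpha$.
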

\begin{proof}
	The proof of \cite[Thm.~3.1.3]{BT95} shows that
	$$\widehat{H}_v(1,\chi_v;-\sbf)=\prod_{\alpha_v \in \A_v}
	\left(1-\frac{\chi_{\alpha_v}(\pi_{\alpha_v})}{q_v^{f_{\alpha_v}s_{\alpha_v}}}\right)^{-1}
	\left(1 + O_\varepsilon\left(\frac{1}{q_v^{1+\varepsilon}}\right)\right).$$
	Combining this with (\ref{eqn:Fourier_thorn_rho}) we obtain
	\begin{align*}
		\widehat{H}_v(\thorn_v,\chi_v;-\sbf)
		&= 1+\frac{1}{|\R|}\sum_{\rho \in \R}\sum_{\alpha_v \in \A_v}
		\frac{\rho_{\alpha_v}(\pi_{\alpha_v})\chi_{\alpha_v}(\pi_{\alpha_v})}{q_v^{f_{\alpha_v}s_{\alpha_v}}}+
		O_\varepsilon\left(\frac{1}{q_v^{1+\varepsilon}}\right).
	\end{align*}
	The result then follows from character orthogonality, which implies that
	\begin{equation*} 
		\frac{1}{|\R|} \sum_{\rho \in \R}\rho_{\alpha_v}(\pi_{\alpha_v}) = \thorn_{\alpha_v}(\pi_v). \qedhere
	\end{equation*}
\end{proof}

\subsubsection{The archimedean places}
If $v \in \Val(F)$ is archimedean, then $\thorn_v$ is very easy to describe.
Namely, if $v$ is complex then $\thorn_v = 1$ as $\Br \CC = 0$. If $v$ is real,
then it is well-known that we have an isomorphism
$$
	T(F_v) \cong (\RR^{*})^{r_1} \times (\RR_{>0})^{r_2} \times (S^1)^{r_3},
$$
of topological groups, for some $r_1,r_2,r_3 \geq 0$. On noting that $\thorn_v$ is locally
constant and using (\ref{eqn:thorn_rho}), we see that $\thorn_v$ is simply the indicator function
of an open and closed subgroup of $T(F_v)$ whose index divides $2^{r_1}$.
These remarks easily allow us to prove the archimedean analogues of Lemma \ref{lem:thorn_invariant_finite}
and Lemma \ref{lem:finite_ramified_vanishing}.
\begin{lemma}\label{lem:thorn_invariant_infinite}
	Let $v \in \Val(F)$ be archimedean and let $K_v \subset T(\OO_v)$ be
	the maximal subgroup for which $H_v(\cdot;-\sbf)$ and $\thorn_v$ are both
	$K_v$-invariant and trivial on $K_v$. Then $K_v$ is compact, open and
	of finite index.
\end{lemma}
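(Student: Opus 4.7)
The plan is to mimic the proof of Lemma \ref{lem:thorn_invariant_finite} but exploit the explicit topological description of $\thorn_v$ at archimedean places given immediately before the statement. Since $K_v$ is by definition a subgroup of $T(\mathcal{O}_v)$, and $T(\mathcal{O}_v)$ is (by definition) the maximal compact subgroup of $T(F_v)$, it already follows that $K_v$ is compact once we verify it is closed. Thus the real content is the finite-index claim.

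First I would isolate the contribution of $H_v(\cdot;-\sbf)$: by the defining property of the Batyrev--Tschinkel canonical metrics recalled in \S\ref{sec:Heights}, the local height function $H_v(\cdot;-\sbf)$ is $T(\mathcal{O}_v)$-invariant and identically $1$ on $T(\mathcal{O}_v)$ for \emph{every} place $v$. In particular, for any subgroup of $T(\mathcal{O}_v)$ the height factor imposes no constraint, and the maximality problem defining $K_v$ reduces to finding the largest subgroup of $T(\mathcal{O}_v)$ on which $\thorn_v$ is invariant and trivial.

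Next I would appeal to the archimedean description of $\thorn_v$ given in the paragraph preceding the lemma: using the topological isomorphism
$$T(F_v)\cong (\RR^*)^{r_1}\times(\RR_{>0})^{r_2}\times (S^1)^{r_3}$$
and character orthogonality via \eqref{eqn:thorn_rho}, one sees that $\thorn_v$ is the indicator function of an open and closed subgroup $H_v\subset T(F_v)$ of index dividing $2^{r_1}$. Since $\thorn_v$ is the indicator of a subgroup, a subgroup $K\subset T(F_v)$ renders $\thorn_v$ invariant under translation by $K$ and trivial on $K$ if and only if $K\subset H_v$. Hence the maximal such subgroup inside $T(\mathcal{O}_v)$ is precisely
$$K_v=T(\mathcal{O}_v)\cap H_v.$$

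Finally, to conclude I would observe that $K_v$ is closed in the compact group $T(\mathcal{O}_v)$, hence compact, and that by the second isomorphism theorem
$$[T(\mathcal{O}_v):K_v]=[T(\mathcal{O}_v):T(\mathcal{O}_v)\cap H_v]=[T(\mathcal{O}_v)H_v:H_v]\leq [T(F_v):H_v]\mid 2^{r_1},$$
so $K_v$ has finite index in $T(\mathcal{O}_v)$. There is no real obstacle here; the only point requiring care is the interpretation of ``of finite index'' as being relative to $T(\mathcal{O}_v)$ rather than $T(F_v)$, since already $T(\mathcal{O}_v)$ has infinite index in $T(F_v)$ whenever $r_1+r_2>0$ (for instance for $\Gm/\RR$).
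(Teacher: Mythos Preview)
Your proof is correct and follows essentially the same approach as the paper: use the $T(\mathcal{O}_v)$-invariance of the Batyrev--Tschinkel height to reduce to $\thorn_v$, then invoke the description of $\thorn_v$ as the indicator of an open-closed subgroup of index dividing $2^{r_1}$. You have simply spelled out in detail what the paper's two-sentence proof leaves implicit (including the explicit identification $K_v = T(\mathcal{O}_v)\cap H_v$ and the index bound via the second isomorphism theorem). One cosmetic point: you use $H_v$ for both the local height and the subgroup, which risks confusion; otherwise nothing to add.
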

\begin{proof}
	The required property for $H_v(\cdot;-\sbf)$ follows as we are using the Batyrev-Tschinkel height
	(see \S \ref{sec:Heights}). The statement	for $\thorn_v$ follows from the above remarks.
\end{proof}

\begin{lemma}\label{lem:infinite_ramified_vanishing}
	Let $v \in \Val(F)$ be archimedean and let $\chi_v$ be a character of $T(F_v)$ which is non-trivial on $K_v$.
	Then
	$$\widehat{H}_v(\thorn_v,\chi_v;-\sbf)=0,$$
\end{lemma}
\begin{proof}
	Equip $K_v$ with a Haar measure $\kappa_v$ and denote by $\overline{\mu}_v$
	the induced quotient measure on $T(F_v) / K_v$.	Then we have
	\begin{equation*}
		\widehat{H}_v(\thorn_v,\chi_v;-\sbf) 
		=\int_{n_v \in T(F_v)/K_v}\thorn_v(n_v) \chi_v(n_v)  H_v(n_v;-\sbf) \mathrm{d} \overline{\mu}_v 
		\int_{K_v} \chi_v(t_v) \mathrm{d} \kappa_v =0. \qedhere
	\end{equation*}
\end{proof}

The following will be used to handle the sum appearing in the
Poisson formula (\ref{eqn:Poisson}).

\begin{lemma}\label{lem:archimedean_sum}
	Choose a $\RR$-vector space norm $||\cdot||$
	on $X^*(T_\infty)_\RR$ and let $\mathcal{L} \subset X^*(T_\infty)_\RR$ be a lattice.
	Let $C$ be a compact subset of $\re \sbf \geq 1$ and let
	$g:X^*(T_\infty)_\RR \times C \to \CC$ be a function. Suppose there exists
	some $0\leq \delta<1/\dim X$ such that
	$$|g(\psi,\sbf)| \ll_C (1 + ||\psi||)^{\delta},$$
	for all $\psi \in  X^*(T_\infty)_\RR$ and all $\sbf \in C$.
	Then the sum
	$$\sum_{\psi \in \mathcal{L}}g(\psi,\sbf) \prod_{v \mid \infty} \widehat{H}_v(\thorn_v,\psi_v; -\sbf),$$
	is absolutely and uniformly convergent on $C$, where we write $\psi=(\psi_v)_{v \mid \infty}$.
\end{lemma}
\begin{proof}
	By (\ref{eqn:Fourier_thorn_rho}) we obtain similar bounds for $\widehat{H}_v(\thorn_v,\psi_v; -\sbf)$
	to those given in \cite[Prop.~2.3.2]{BT95}. Hence the result follows from \cite[Cor.~2.3.4]{BT95}.
\end{proof}



\subsection{The global Fourier transform}
We now move onto the global Fourier transforms. We shall relate
these to the partial Euler products considered in \S \ref{sec:partial_zeta_functions}.

\begin{lemma} \label{lem:global_fourier}
    Let $\varepsilon >0$ and let $\chi$ be an automorphic character of $T(\Adele_F)$.
	Then there exists a function $\varphi(\chi;\sbf)$	which is holomorphic and uniformly bounded with respect to
	$\chi$ on $\re \sbf \geq 1/2 + \varepsilon$, such that
	$$\widehat{H}(\thorn,\chi;-\sbf)=\prod_{v \mid \infty}\widehat{H}_v(\thorn_v,\chi_v; -\sbf)
	 \prod_{\alpha \in \A} L_{\R_\alpha}(\chi_{\alpha},s_{\alpha})\varphi(\chi;\sbf),$$
	for $\re \sbf > 1$.
\end{lemma}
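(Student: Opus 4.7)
The plan is to factor the global Fourier transform as a product of local ones and then match, place by place, the non-archimedean local factors against the Euler factors of the partial $L$-functions $L_{\R_\alpha}(\chi_\alpha, s_\alpha)$. Concretely, I would first write
\[
\widehat{H}(\thorn,\chi;-\sbf) = \prod_{v \mid \infty}\widehat{H}_v(\thorn_v,\chi_v;-\sbf) \cdot \prod_{v \in S, v \nmid \infty} \widehat{H}_v(\thorn_v,\chi_v;-\sbf) \cdot \prod_{v \notin S} \widehat{H}_v(\thorn_v,\chi_v;-\sbf),
\]
valid on $\re \sbf > 1$ where the integral defining $\widehat{H}$ is absolutely convergent. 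The archimedean product is pulled out verbatim, and the finite product over $v \in S, v \nmid \infty$ is absorbed wholesale into $\varphi$: by Lemma \ref{lem:local_Fourier} each such factor is holomorphic and uniformly bounded in $\chi$ on $\re \sbf \geq 1/2+\varepsilon$.

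The central step is to handle $v \notin S$. Here I split into two subcases depending on whether $\chi_v$ is unramified. If $\chi_v$ is unramified, Lemma \ref{lem:good_Fourier} gives
\[
\widehat{H}_v(\thorn_v,\chi_v;-\sbf) = \prod_{\alpha_v \in \A_v}\left(1-\tfrac{\thorn_{\alpha_v}(\pi_{\alpha_v})\chi_{\alpha_v}(\pi_{\alpha_v})}{q_v^{f_{\alpha_v}s_{\alpha_v}}}\right)^{-1} \bigl(1 + O_\varepsilon(q_v^{-1-\varepsilon})\bigr).
\]
Regrouping the main factor by $\alpha \in \A$ via the bijection between $\alpha_v \mid \alpha$ and the places of $F_\alpha$ above $v$, and using that $q_v^{f_{\alpha_v}} = q_{\alpha_v}$, the total product over $v \notin S$ of the main terms agrees with $\prod_{\alpha \in \A} L_{\R_\alpha}(\chi_\alpha, s_\alpha)$ up to the (finitely many) Euler factors at places in $S$; these latter factors are holomorphic, non-vanishing, and uniformly bounded above and below on $\re \sbf \geq 1/2+\varepsilon$ since $|\thorn_{\alpha_v}\chi_{\alpha_v}| \leq 1$ and the norms $q_{\alpha_v}^{s_\alpha}$ are bounded away from $1$ there. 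The product of the remainder terms $\prod_{v \notin S}(1+O_\varepsilon(q_v^{-1-\varepsilon}))$ converges absolutely and is uniformly bounded in $\chi$ on $\re \sbf \geq 1/2+\varepsilon$, hence contributes a holomorphic bounded factor to $\varphi$. If instead $\chi_v$ is ramified at some $v \notin S$, then Lemma \ref{lem:thorn_invariant_finite} gives $K_v = T(\OO_v)$, so $\chi_v$ is non-trivial on $K_v$ and Lemma \ref{lem:finite_ramified_vanishing} forces $\widehat{H}_v(\thorn_v,\chi_v;-\sbf) = 0$; the whole global Fourier transform then vanishes and we may simply set $\varphi(\chi;\sbf) = 0$, which is trivially holomorphic and satisfies the required uniform bound.

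Combining these contributions, I define $\varphi(\chi;\sbf)$ as the product of the bounded $v \in S$ factors, the convergent remainder product, and the ratio that compensates for the missing Euler factors at $v \in S$ in $\prod_\alpha L_{\R_\alpha}(\chi_\alpha,s_\alpha)$ (with $\varphi \equiv 0$ in the ramified subcase). Each piece is holomorphic on $\re \sbf \geq 1/2+\varepsilon$ and bounded there by a constant independent of $\chi$, yielding the claimed identity on $\re \sbf > 1$ (where both sides are defined by absolutely convergent products) together with the required holomorphicity and uniform bound on $\varphi$.

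The only mild obstacle is bookkeeping: one must verify that rearranging the non-archimedean Euler product into a product indexed by $\alpha \in \A$ is legitimate (this uses absolute convergence of the remainder product and the identification of places of $F_\alpha$ above $v$ with the $\alpha_v \mid \alpha$) and that the uniform bound on $\varphi$ genuinely survives uniformly in $\chi$. The latter reduces to the uniform bounds in Lemmas \ref{lem:local_Fourier} and \ref{lem:good_Fourier}, both of which are obtained by trivially estimating $|\thorn_v \chi_v| \leq 1$, so no input specific to the particular character $\chi$ enters.
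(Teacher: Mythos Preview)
Your proof is correct and follows the same route as the paper's own argument, which simply cites Lemmas \ref{lem:local_Fourier} and \ref{lem:good_Fourier} together with the identification of $\{\alpha_v : \alpha_v \mid \alpha\}$ with the places of $F_\alpha$ above $v$. You have in fact been more thorough than the paper: the statement is for \emph{all} automorphic $\chi$, and you explicitly dispose of the case where $\chi_v$ is ramified at some $v \notin S$ by invoking Lemmas \ref{lem:thorn_invariant_finite} and \ref{lem:finite_ramified_vanishing} and setting $\varphi \equiv 0$, whereas the paper's one-line proof passes over this case in silence (it only matters later, in the proof of Theorem \ref{thm:height_zeta_function}, where the sum is restricted to $\mathcal{U}$ anyway).
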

\begin{proof}
	Recalling the definition of the partial Euler product (\ref{def:partial_zeta_function}), this follows from Lemma \ref{lem:local_Fourier},
	Lemma \ref{lem:good_Fourier} and the fact that for non-archimedean places $v$, we have identified those
	elements $\alpha_v \mid \alpha$ with those places $w$ of $F_\alpha$ such that $w\mid v$.	
\end{proof}

\subsubsection{The anisotropic case}\label{Sec:anisotropic_case}
So far, all our arguments have applied to arbitrary
tori.
$$\text{From now on we assume that } T \text{ is anisotropic}.$$
We also restrict to the complex line $\sbf= (s)_{\alpha \in \A}$ in $\CC^\A$.
As explained in \S \ref{sec:Heights}, the resulting
height function $H(\cdot;-s)$ is a complex power of the anticanonical
height function.

\begin{lemma} \label{lem:global_fourier_anisotropic}
	Let $\chi$ be an automorphic
	character of $T(\Adele_F)$. Then $\widehat{H}(\thorn,\chi;-s)$
	admits a holomorphic continuation to the line $\re s=1$, apart
	from possibly at $s=1$. Here
	$$\widehat{H}(\thorn,\chi;-s) = \prod_{\substack{\alpha \in \A \\ \chi_\alpha \in \R_\alpha}}
    \left( \frac{c_{\R,\chi,\alpha}}{(s-1)^{1/|\R_\alpha|}} + O\left(\frac{1}{(s-1)^{1/|\R_\alpha|-1}}\right)\right),$$
	as $s \to 1$, for some constants $c_{\R,\chi,\alpha}$ which are non-zero if $\chi =1$.
\end{lemma}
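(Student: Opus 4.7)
The plan is to restrict the formula of Lemma~\ref{lem:global_fourier} to the diagonal $\sbf=(s)_{\alpha\in\A}$, which for $\re s>1$ gives
$$\widehat{H}(\thorn,\chi;-s)=\varphi(\chi;s)\prod_{v\mid\infty}\widehat{H}_v(\thorn_v,\chi_v;-s)\prod_{\alpha\in\A}L_{\R_\alpha}(\chi_\alpha,s).$$
By Lemma~\ref{lem:global_fourier} the factor $\varphi(\chi;s)$ is holomorphic on $\re s\geq 1/2+\varepsilon$, and the archimedean factors $\widehat{H}_v(\thorn_v,\chi_v;-s)$ are holomorphic on the same half plane by Lemma~\ref{lem:local_Fourier}. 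Thus the analytic continuation of $\widehat{H}(\thorn,\chi;-s)$ across the line $\re s=1$ reduces to analysing the finite product $\prod_\alpha L_{\R_\alpha}(\chi_\alpha,s)$.

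Applying Lemma~\ref{lem:partial_Euler_product} to each factor in turn, one sees that $L_{\R_\alpha}(\chi_\alpha,s)$ is holomorphic on $\re s=1$ unless $\chi_\alpha=||\cdot||^{i\theta_\alpha}\rho_\alpha$ for some $\rho_\alpha\in\R_\alpha$ and $\theta_\alpha\in\RR$. In the latter case the factor carries a branch-type singularity of order $1/|\R_\alpha|$ at $s=1-i\theta_\alpha$, which lies on the line $\re s=1$, and it sits precisely at $s=1$ when $\theta_\alpha=0$, that is, when $\chi_\alpha\in\R_\alpha$; the leading behaviour at $s=1$ is then the one furnished by Lemma~\ref{lem:partial_Euler_product}.

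The main step is therefore to rule out the twisted case $\theta_\alpha\neq 0$, and this is where the anisotropy of $T$ enters. Since $X^*(T)=0$, the type-at-infinity map on automorphic characters of $T(\Adele_F)$ has image a full lattice in $X^*(T_\infty)_\RR$, so types at infinity are \emph{quantised}. Pulling $\chi$ through~(\ref{eqn:fundamental_dual}), the archimedean type of $\chi_\alpha$ as a Hecke character of $F_\alpha$ is determined by that of $\chi$, and the finite order of $\rho_\alpha\in\R_\alpha$ constrains the archimedean type of $\chi_\alpha$ to a discrete set. A non-trivial principal twist $||\cdot||^{i\theta_\alpha}$ has a continuous archimedean type and would violate this quantisation, forcing $\theta_\alpha=0$. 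I expect making this compatibility precise, via a careful passage between characters of $T$ and tuples of Hecke characters of the $F_\alpha$, to be the principal obstacle.

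Granting this, $\widehat{H}(\thorn,\chi;-s)$ extends holomorphically to $\re s=1\setminus\{1\}$. The expansion near $s=1$ then follows by multiplying the singular leading terms from Lemma~\ref{lem:partial_Euler_product} indexed by those $\alpha$ with $\chi_\alpha\in\R_\alpha$, and absorbing the values at $s=1$ of the remaining holomorphic factors into the constants $c_{\R,\chi,\alpha}$. For $\chi=1$ every $\chi_\alpha$ is the trivial character and lies in $\R_\alpha$, and the non-vanishing of each $c_{\R,1,\alpha}$ is a consequence of the non-vanishing assertion of Lemma~\ref{lem:partial_Euler_product}, together with $\varphi(1;1)\neq 0$ and $\widehat{H}_v(\thorn_v,1;-1)\neq 0$, the latter supplied by Lemma~\ref{lem:trivial_non_vanishing}.
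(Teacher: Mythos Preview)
Your overall strategy is correct and matches the paper's: restrict Lemma~\ref{lem:global_fourier} to the diagonal, observe that $\varphi(\chi;s)$ and the archimedean factors are harmless on $\re s\geq 1/2+\varepsilon$, and reduce to the analytic behaviour of $\prod_\alpha L_{\R_\alpha}(\chi_\alpha,s)$ via Lemma~\ref{lem:partial_Euler_product}. The identification of the key step (ruling out $\theta_\alpha\neq 0$) is also right.

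However, your argument for that step is incomplete, and the line of reasoning you sketch (``quantisation'' of types at infinity via a lattice) is more involved than necessary and not obviously sufficient as stated: a single Hecke character $\chi_\alpha$ has a fixed archimedean type, so the fact that some image is a lattice does not by itself exclude that type from equalling that of $||\cdot||^{i\theta_\alpha}\rho_\alpha$ for one particular $\theta_\alpha$. The paper's argument is much more direct. Since $T$ is anisotropic, $X_*(T)_\RR=0$, so the functorial splitting $T(\Adele_F)\cong T(\Adele_F)^1\times X_*(T)_\RR$ of~(\ref{eqn:adelic_splitting}) collapses to $T(\Adele_F)=T(\Adele_F)^1$. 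Functoriality of this splitting along the surjection $\prod_\alpha T_\alpha\to T$ of~(\ref{eqn:fundamental_dual}) means the image of $X_*(T_\alpha)_\RR$ inside $T(\Adele_F)$ is trivial, so the pulled-back character $\chi_\alpha$ is trivial on $X_*(T_\alpha)_\RR$. Now suppose $\chi_\alpha=||\cdot||^{i\theta_\alpha}\rho_\alpha$. The finite-order character $\rho_\alpha$ is trivial on the connected group $X_*(T_\alpha)_\RR$, whereas $||\cdot||^{i\theta_\alpha}$ is non-trivial there precisely when $\theta_\alpha\neq 0$ (the principal characters being exactly those factoring through $X_*(T_\alpha)_\RR$). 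Hence $\theta_\alpha=0$ and $\chi_\alpha\in\R_\alpha$. This replaces your paragraph on quantisation entirely.

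A minor point on non-vanishing for $\chi=1$: you assert $\varphi(1;1)\neq 0$, but Lemma~\ref{lem:global_fourier} only gives holomorphy and boundedness of $\varphi$, not non-vanishing. The paper obtains non-vanishing of the leading term by invoking Lemma~\ref{lem:trivial_non_vanishing} at \emph{all} places (archimedean and non-archimedean alike), together with the non-vanishing of $\lim_{s\to 1}(s-1)^{\sum_\alpha 1/|\R_\alpha|}\prod_\alpha L_{\R_\alpha}(1,s)$ from Lemma~\ref{lem:partial_Euler_product}. Equivalently, to justify $\varphi(1;1)\neq 0$ you should appeal to Lemma~\ref{lem:trivial_non_vanishing} at the non-archimedean places as well.
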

\begin{proof}
    By Lemma \ref{lem:local_Fourier} and Lemma \ref{lem:global_fourier},
    it suffices to study the partial
    Euler products $L_{\R_\alpha}(\chi_{\alpha},s)$.
    By Lemma \ref{lem:partial_Euler_product}, we know that $L_{\R_\alpha}(\chi_{\alpha},s)$
    has a singularity on the line $\re s =1$ if and only if $\chi_\alpha = m_\alpha \rho_\alpha$ for some
    $\rho_\alpha \in \R_\alpha$ and some $m_\alpha \in X^*(T_\alpha)_\RR$. As $T$ is anisotropic however
    we have $X^*(T) = 0$. It follows from the functorial isomorphism (\ref{eqn:adelic_splitting})
    that for any such character we have $m_\alpha = 0$, in particular $\chi_\alpha \in \R_\alpha$.
    Hence Lemma \ref{lem:partial_Euler_product} implies that the singularity of $L_{\R_\alpha}(\chi_{\alpha},s)$
    occurs at $s=1$. The non-vanishing of the constants
    $c_{\R,1,\alpha}$ follows from Lemma \ref{lem:partial_Euler_product}
    and Lemma \ref{lem:trivial_non_vanishing}.
\end{proof}

\subsection{The asymptotic formula}
We now apply the Poisson formula (\ref{eqn:Poisson}) to obtain the following.

\begin{theorem}\label{thm:height_zeta_function}
    Let $$\Omega(s)=Z(s)(s-1)^{\sum\limits_{\,\alpha \in \A} 1/|\R_\alpha|}.$$
	Then $\Omega(s)$ admits an extension to an infinitely differentiable function on $\re s \geq 1$.
    Moreover we have
    $$Z(s)= \Omega(1) (s-1)^{-\sum\limits_{\mathclap{\alpha \in \A}} 1/|\R_\alpha|}
    + O\left((s-1)^{1 - \sum\limits_{\mathclap{\alpha \in \A}} 1/|\R_\alpha|} \right), 
    \quad \text{as } s \to 1. $$
\end{theorem}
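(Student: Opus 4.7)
The plan is to start from the Poisson expansion \eqref{eqn:HZF_anisotropic} and dissect the contribution of each character. By Lemmas \ref{lem:finite_ramified_vanishing} and \ref{lem:infinite_ramified_vanishing}, $\widehat{H}(\thorn,\chi;-s)=0$ unless $\chi$ is trivial on the open subgroup $K:=\prod_{v}K_v$ of $T(\Adele_F)$. Since $T$ is anisotropic we have $T(\Adele_F)=T(\Adele_F)^1$ and $\rank X^*(T)=0$; the type-at-infinity map \eqref{def:type_at_infinity} therefore sends $\Fourier{(T(\Adele_F)/T(F)K)}$, with finite kernel, onto a full-rank lattice in $X^*(T_\infty)_\RR$, so the Poisson sum reduces to a sum over finitely many translates of such a lattice.

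Next I will factor each Fourier transform via Lemma \ref{lem:global_fourier} as a product of archimedean local factors $\widehat{H}_v(\thorn_v,\chi_v;-s)$, partial Euler products $L_{\R_\alpha}(\chi_\alpha,s)$, and a uniformly bounded correction $\varphi(\chi;s)$; I then partition characters according to $S(\chi):=\{\alpha\in\A:\chi_\alpha\in\R_\alpha\}$. Lemma \ref{lem:global_fourier_anisotropic} shows that the only singularity of $\widehat{H}(\thorn,\chi;-s)$ on $\re s\ge 1$ lies at $s=1$, of branch order $\sum_{\alpha\in S(\chi)}1/|\R_\alpha|$. The characters with $S(\chi)=\A$ lie in the finite set $C(T,\R)$ of Theorem \ref{thm:sub_C}. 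The crucial smoothness observation, extracted from Lemma \ref{lem:partial_Euler_product}, is that for $\chi_\alpha\in\R_\alpha$ the function $L_{\R_\alpha}(\chi_\alpha,s)(s-1)^{1/|\R_\alpha|}$ is holomorphic at $s=1$, since its $|\R_\alpha|$-th power equals a holomorphic nonvanishing function times $(s-1)$. Consequently the contribution of the $S(\chi)=\A$ characters to $(s-1)^{\sum_{\alpha}1/|\R_\alpha|}Z(s)$ extends holomorphically to a neighbourhood of $\re s\ge 1$, producing the principal term $\Omega(1)+O(s-1)$.

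For each proper subset $S\subsetneq\A$, the sum over characters with $S(\chi)=S$ becomes, after pulling out the common singular factor $(s-1)^{-\sum_{\alpha\in S}1/|\R_\alpha|}$, a lattice sum whose absolute and uniform convergence on compact subsets of $\re s\geq 1$ I will establish via Lemma \ref{lem:achimedean_sum}. Its growth hypothesis is verified by combining the convexity bounds on Hecke $L$-functions (Lemma \ref{lem:Hecke_bound}) fed through Lemma \ref{lem:partial_Euler_product} to control the factors $L_{\R_\alpha}(\chi_\alpha,s)$ with $\chi_\alpha\notin\R_\alpha$ (which are holomorphic on $\re s\ge 1$), together with the uniform boundedness of $\varphi(\chi;s)$ and the rapid decay of the archimedean Fourier transforms. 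Summing over $S\subsetneq\A$, the largest singular order arises from $S=\A\setminus\{\alpha_0\}$, yielding the stated error $\sum_{\alpha_0\in\A}(s-1)^{-\sum_{\alpha\neq\alpha_0}1/|\R_\alpha|}$. The principal obstacle will be the joint control of the branch-point behaviour of the Fourier transforms at $s=1$ together with the uniform convergence of the infinite lattice sum; in particular Lemma \ref{lem:achimedean_sum} demands a decay exponent strictly less than $1/\dim X$, so the polynomial growth in the conductor and $\|\chi\|$-norm contributed by the partial Euler products must be dominated by the rapid decay of the archimedean Fourier transforms uniformly on the relevant compact $s$-region.
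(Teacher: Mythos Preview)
Your proposal is correct and follows essentially the same approach as the paper: restrict to characters trivial on $K=\prod_v K_v$, use the lattice structure coming from the type-at-infinity map, factor via Lemma~\ref{lem:global_fourier}, and establish convergence via Lemma~\ref{lem:achimedean_sum} combined with the Hecke $L$-function bounds of Lemma~\ref{lem:Hecke_bound} fed through Lemma~\ref{lem:partial_Euler_product}. The paper's proof is slightly more streamlined in that it does not explicitly partition by $S(\chi)$ but simply shows the full Poisson sum \eqref{eqn:HZF_anisotropic} converges absolutely and uniformly on compact subsets of $\re s\geq 1$ avoiding $s=1$, and then reads off the singularity structure at $s=1$ (and hence the claimed error terms) directly from Lemma~\ref{lem:global_fourier_anisotropic}.
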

\begin{proof}
	As $T$ is anisotropic, the Poisson formula (\ref{eqn:Poisson}) reads
	\begin{equation} \label{eqn:HZF_anisotropic}
    	Z(s)= \frac{1}{\vol(T(\Adele_F)/T(F))}
    	\sum_{\chi \in \Fourier{\left(T(\Adele_F)/T(F)\right)}}\widehat{H}(\thorn,\chi;-s).
	\end{equation}
	To show that the application of the Poisson formula is valid, we shall use the criterion given by Bourqui
	\cite[Cor.~3.36]{Bou11}. By \emph{loc.~cit.} it suffices to show that
	the sum in \eqref{eqn:HZF_anisotropic} is absolutely convergent for $\re s > 1$,
	and that there exists an open neighbourhood of the origin $\Omega \subset T(\Adele_F)$ 
	and strictly positive constants $C_1$ and $C_2$ such that for all $\omega \in \Omega$
	and all $t \in T(\Adele_F)$ we have
	\begin{equation} \label{eqn:Bourqui}
		C_1|\thorn(t)H(t;-s)| \leq |\thorn(\omega t)H(\omega t;-s)| \leq C_2|\thorn(t)H(t;-s)|.
	\end{equation}
	We take $\Omega_v = K_v$ for $v$ non-archimedean, so that $\thorn(\omega_vt_v)H(\omega_vt_v; -s) 
	= \thorn(t_v)H(t_v; -s)$ for all $\omega_v \in \Omega_v$ and all $t_v \in T(F_v)$,
	by Lemma \ref{lem:thorn_invariant_finite}. For archimedean $v$, we take $\Omega_v$
	to be the intersection of $T(F_v)_{\br}$ with the inverse image in 
	$T(F_v)$ of an open ball in $X_*(T_v)$. By \eqref{eqn:thorn_rho} we see that $\Omega_v$ is open and that 
	$\thorn(\omega_vt_v) = \thorn(t_v)$
	for all $\omega_v \in \Omega_v$ and all $t_v \in T(F_v)$. Hence on taking $\Omega=\prod_{v \in \Val(F)} \Omega_v$,
	the claim \eqref{eqn:Bourqui} follows as in \cite[Lem.~3.22, Lem.~3.23]{Bou11}.
	
	For absolute convergence on $\re s > 1$, let $A = \sum_{\alpha \in \A} 1/|\R_\alpha|$.
	We will show the stronger claim that the sum 
    \begin{equation} \label{eqn:to_show}
    \sum_{\chi \in \Fourier{\left(T(\Adele_F)/T(F)\right)}}
    \widehat{H}(\thorn,\chi;-s)(s-1)^A,
    \end{equation}
    is absolutely and uniformly convergent on any compact subset $C$ of the half-plane $\re s \geq 1$.
    This will show the validity of \eqref{eqn:HZF_anisotropic} and, on applying 
    Lemma \ref{lem:global_fourier_anisotropic},
    also complete the proof of the theorem.
    
	Let $K=\prod_{v \in \Val(F)}K_v$ and denote by $\mathcal{U}$ the group of automorphic characters
	of $T$ which are trivial on $K$. Note that by Lemma \ref{lem:finite_ramified_vanishing} and 
	Lemma \ref{lem:infinite_ramified_vanishing},
	the sum in (\ref{eqn:to_show}) may be taken only over those characters which lie in $\mathcal{U}$.
	As $K \subset K_T$ is a subgroup of finite index, it follows that the type at infinity
	map (\ref{def:type_at_infinity}) yields a homomorphism
	\begin{align*}
		\mathcal{U} &\to   X^*(T_\infty)_\RR, \quad
		\chi \mapsto \chi_\infty,
	\end{align*}
	which has finite kernel $\mathcal{K}$ and whose image $\mathcal{L}$ is a lattice of full rank.
	We obtain
	$$
		\sum_{\chi \in \mathcal{U}}\widehat{H}(\thorn,\chi;-s)(s-1)^A
		= \sum_{\psi \in \mathcal{L}} \prod_{v \mid \infty}\widehat{H}_v(\thorn_v,\psi_v; -s)
		\sum_{\substack{\chi \in \mathcal{U}\\ \chi_\infty = \psi}} \prod_{v \nmid \infty} \widehat{H}_v(\thorn_v,\chi_v;-s)(s-1)^A.
	$$
	Therefore by Lemma \ref{lem:global_fourier}, for $s \in C$ we have
	\begin{align*}
		\sum_{\chi \in \mathcal{U}} \widehat{H}(\thorn,\chi;-s)(s-1)^A
		& \ll \sum_{\psi \in \mathcal{L}} \prod_{v \mid \infty} |\widehat{H}_v(\thorn_v,\psi_v; -s)|
		\sum_{\substack{\chi \in \mathcal{U}\\ \chi_\infty = \psi}} \prod_{\alpha \in \A} |L_{\R_\alpha}(\chi_{\alpha},s)(s-1)^A|.
	\end{align*}
	As $K \subset K_T$ has finite index, there exists  $Q >0$ such that $q(\chi_\alpha) < Q$ 
	for all $\chi \in \mathcal{U}$
	and all $\alpha \in \A$. Therefore Lemma \ref{lem:Hecke_bound},
	Lemma \ref{lem:partial_Euler_product} and the finiteness of $\mathcal{K}$ imply that
	for any $\varepsilon >0$ and $s \in C$ we have
	$$\sum_{\substack{\chi \in \mathcal{U}\\ \chi_\infty = \psi}} 
	\prod_{\alpha \in \A} |L_{\R_\alpha}(\chi_{\alpha},s)(s-1)^A|
	\ll_{\varepsilon,C} |\mathcal{K}| \cdot Q^{\varepsilon} \cdot 
	(1 + \max_{\alpha \in \A}||\psi_\alpha||)^{\varepsilon}, $$
	for each $\psi \in \mathcal{L}$. Here $(\psi_\alpha)$ is the image of $\psi$ under the map
	$X^*(T_\infty) \to \prod_{\alpha \in \A} X^*(T_{\alpha,\infty})$
	and $||\psi_\alpha||$ is defined as in (\ref{def:character_norm}).
	The result therefore follows from Lemma \ref{lem:archimedean_sum}.
\end{proof}

\begin{remark}\label{rem:continuation}
	We are unable to prove that $Z(s)$ admits a \emph{holomorphic}
	extension to the line $\re s= 1$, away from $s=1$. To get such a result one requires
	uniform zero-free regions for Hecke $L$-functions, in order
	to obtain uniform holomorphic continuations of the partial
	Euler products. The generalised Riemann hypothesis would show that $Z(s)$ admits a holomorphic
	continuation to a half-plane $\re s > 1-\delta$, away from the branch cut at $s=1$.
	Unfortunately, current zero-free regions for Hecke $L$-functions
	(see \cite[Thm.~5.10]{IK04}) approach the line $\re s =1$ as one varies the infinity
	type of the character. Hence it does not seem possible with current technology
	to obtain any kind of continuation of $Z(s)$ in the half-plane $\re s <1$.
\end{remark}
In order to apply Theorem \ref{thm:Delange} to deduce Theorem \ref{thm:Brauer},
we need to know that
\begin{equation} \label{eqn:non-zero}
    \Omega(1)=\lim_{s \to 1} Z(s)(s-1)^{\sum\limits_{\,\alpha \in \A} 1/|\R_\alpha|} \neq 0.
\end{equation}
It should be emphasised that (\ref{eqn:non-zero}) does not follow from
what we have shown so far; since more than one character may give
rise to the singularity of highest order, it is theoretically possible
that cancellation may occur. We postpone the proof of (\ref{eqn:non-zero}) for now.
As $T$ is anisotropic, by (\ref{seq:toric_Pic}) we have $\#\A = \rho(X)$.
Lemma \ref{lem:toric_purity} also implies that
$$|\R_\alpha| = |\res_\alpha(\br)|, \quad \text{for all } \alpha \in \A.$$
Hence we deduce that
\begin{equation} \label{eqn:rho_Delta}
	\sum_{\alpha \in \A} \frac{1}{|\R_\alpha|} = \rho(X) - \Delta_X(\br),
\end{equation}
where $\Delta_X(\br)$ is as in Theorem \ref{thm:Brauer}.
Therefore assuming (\ref{eqn:non-zero}), we may apply Theorem \ref{thm:Delange}
and use Theorem \ref{thm:height_zeta_function} to find that
\begin{equation} \label{eqn:asym}
	N(T,H,\br,B) \sim \frac{\Omega(1)}{\Gamma(\rho(X) - \Delta_X(\br))} B(\log B)^{ \rho(X) - \Delta_X(\br)-1}, \quad \text{as } B \to \infty,
\end{equation}
as required for Theorem \ref{thm:Brauer}.

\subsection{Non-vanishing of the leading constant} \label{sec:non-vanishing}
We now verify (\ref{eqn:non-zero}). It is here where subordinate Brauer group elements appear,
and hopefully it should soon become clear to the reader that the study performed earlier
in \S \ref{Sec:subordinate} and \S \ref{sec:sub_tori} was worth the effort.

By Lemma \ref{lem:global_fourier_anisotropic}, the characters
which give rise to the singularity of $Z(s)$ of highest order are exactly the finite collection
of characters $C(T,\R)$, as defined in Theorem \ref{thm:sub_C}. Given (\ref{eqn:HZF_anisotropic}),
applying Theorem \ref{thm:sub_C} we see that we need to consider the sum
$$\sum_{\chi \in \Sub_e(X,\br)/\Be(T)}\widehat{H}(\thorn,\chi;-\sbf).$$
Note that it follows from character orthogonality that 
$$
	\sum_{\chi \in \Sub_e(X,\br)/\Be(T)}\widehat{H}(\thorn,\chi;-\sbf)
	 =\frac{|\Sub_e(X,\br)|}{|\Be(T)|}\int_{T(\Adele_F)_{\br}^{\Sub(X,\br)}}H(t;-s)\mathrm{d}\mu.
$$
Therefore in order to show (\ref{eqn:non-zero}), by (\ref{eqn:rho_Delta}) it suffices to prove that
\begin{equation} \label{eqn:non-zero2}
	\lim_{s \to 1} (s-1)^{\rho(X) - \Delta_X(\br)}
	\int_{T(\Adele_F)_{\br}^{\Sub(X,\br)}} H(t;-s)\mathrm{d}\mu \neq 0.
\end{equation}
If $\Sub(X,\br) \neq \Sub(F(X),\br)$, then Theorem \ref{thm:Sub_Harari} implies that
$T(\Adele_F)_{\br}^{\Sub(X,\br)} \subset T(\Adele_F)_{\br}$ is neither open nor
closed for the product topology and can be quite complicated.
Therefore rather than dealing with it directly, we shall show that the analogue
of (\ref{eqn:non-zero2}) holds for a certain subspace.

\begin{lemma} \label{lem:Sub(X,B)}
	The limit
	$$\lim_{s \to 1} (s-1)^{\rho(X) - \Delta_X(\br)}
	\int_{T(\Adele_F)_{\Sub_e(X,\br)}} H(t;-s)\mathrm{d}\mu,$$
	exists and is non-zero.
\end{lemma}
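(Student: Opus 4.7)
Plan: I would begin by observing that the set $T(\Adele_F)_{\Sub_e(X,\br)}$ is defined by a collection of local conditions (one at each place), so the integral factors as a product of local integrals $I(s)=\prod_v I_v(s)$ with $I_v(s)=\int_{T(F_v)_{\Sub_e(X,\br)}}H_v(t_v;-s)\,d\mu_v$. By Lemma \ref{lem:Br_e_homo} and Theorem \ref{thm:Brauer_local} the subset $T(F_v)_{\Sub_e(X,\br)}$ is the kernel of a homomorphism $T(F_v)\to T(F_v)^\sim$ with finite image $M_v$ (finiteness coming from Lemma \ref{lem:Sub_is_finite} together with $\Sub_e(X,\br)\cap\Br_0 X=0$), so character orthogonality gives $I_v(s)=\frac{1}{|M_v|}\sum_{\chi_v\in M_v}\widehat{H}_v(1,\chi_v;-s)$.

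The main work is at the good places $v\notin S$. By Lemma \ref{lem:toric_purity} the pullback map $\Sub_e(X,\br)\to\Br_e T_\alpha$ has image exactly $\br_\alpha$ (the subordinate condition gives $\subseteq$ and $\br\subset\Sub_e(X,\br)$ gives $\supseteq$); since the characters attached to $\R$ are unramified outside $S$ the same holds for those attached to subordinate elements, so the $\alpha_v$-projection of $M_v$ in $T_\alpha(F_v)^\sim$ equals $\R_{\alpha_v}$ for every $\alpha_v\in\A_v$. Applying Lemma \ref{lem:good_Fourier} with trivial $\R$, expanding the resulting Euler factor in powers of $q_v^{-1}$, and averaging over $\chi_v\in M_v$ using the projection-level orthogonality identity \eqref{eqn:character_orthog}, one obtains $I_v(s)=\prod_{\alpha_v\in\A_v}\bigl(1-\thorn_{\alpha_v}(\pi_{\alpha_v})q_v^{-f_{\alpha_v}s_{\alpha_v}}\bigr)^{-1}\bigl(1+O_\varepsilon(q_v^{-1-\varepsilon})\bigr)$, since the linear-in-$q_v^{-1}$ terms only depend on the $\alpha_v$-projections individually, and cross-terms enter at order $q_v^{-2}$ or higher.

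The finitely many bad places are handled by Lemma \ref{lem:local_Fourier}, which provides holomorphy and uniform bounds for $I_v(s)$ on $\re\sbf\geq 1/2+\varepsilon$, while the argument of Lemma \ref{lem:trivial_non_vanishing}, applied to a compact neighbourhood of $1$ contained in the open subgroup $T(F_v)_{\Sub_e(X,\br)}$ (which contains $1$ because $b(1)=0$ for every $b\in\Br_e T$), shows $I_v(s)>0$ for $\sbf\in\RR^{\A}_{>1/2}$ and hence is nonzero. Assembling the local contributions one obtains $I(s)=\prod_{\alpha\in\A}L_{\R_\alpha}(1,s_\alpha)\cdot G(s)$ with $G$ holomorphic and non-vanishing near $\sbf=(1)_{\alpha\in\A}$. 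Specialising to $\sbf=(s)_{\alpha\in\A}$ and invoking Lemma \ref{lem:partial_Euler_product} together with \eqref{eqn:rho_Delta} shows that each $L_{\R_\alpha}(1,s)$ contributes a singularity of order $1/|\R_\alpha|$ with nonzero leading coefficient $c_{\R_\alpha,1}$, yielding a total singularity of order $\sum_\alpha 1/|\R_\alpha|=\rho(X)-\Delta_X(\br)$ with leading coefficient $G(1)\prod_\alpha c_{\R_\alpha,1}\neq 0$. The main technical obstacle is verifying that the local averaging at good places really recovers the Euler factor of $\prod_\alpha L_{\R_\alpha}$: this rests on the fact that only the projection structure of $M_v$ to the individual boundary-torus character groups enters at leading order, while the possibly intricate joint structure of $M_v\subseteq\prod_{\alpha_v}\R_{\alpha_v}$ is absorbed into the convergent factor $G(s)$.
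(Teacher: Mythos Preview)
Your approach is correct but considerably more laborious than the paper's. The paper observes that the integral in question is precisely the global Fourier transform $\widehat{H}(\thorn_{\Sub_e(X,\br)},1;-s)$, i.e.\ the Fourier transform at the trivial character for the \emph{enlarged} finite subgroup $\br'=\Sub_e(X,\br)\subset\Br_e T$. Since all of the machinery in \S5 (Lemmas~\ref{lem:local_Fourier}--\ref{lem:global_fourier_anisotropic}) was proved for an arbitrary finite subgroup of $\Br_e T$, one may simply invoke Lemma~\ref{lem:global_fourier_anisotropic} with $\br$ replaced by $\br'$ to obtain a nonzero limit of order $\rho(X)-\Delta_X(\br')$. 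The proof then concludes with the one-line observation that $\Delta_X(\br')=\Delta_X(\br)$, because by construction $\br'$ and $\br$ generate the same group of residues at every boundary divisor. Your argument essentially re-derives Lemmas~\ref{lem:good_Fourier} and~\ref{lem:global_fourier} for $\br'$ from scratch rather than citing them.

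One point in your write-up deserves a word of caution: the claim that the local characters $M_v$ attached to $\Sub_e(X,\br)$ are unramified outside the \emph{original} $S$ is true but not immediate from the unramifiedness of $\R$ alone. You correctly note that the image of $\Sub_e(X,\br)$ in each $\Br_e T_\alpha$ is exactly $\br_\alpha$, so the pulled-back characters lie in $\R_\alpha$ and are unramified on $T_\alpha(\OO_v)$. To conclude unramifiedness on $T(\OO_v)$ you need the surjectivity of $\prod_\alpha T_\alpha(\OO_v)\to T(\OO_v)$ for $v\notin S$, which holds because $v$ is unramified in the splitting field (so the sequence \eqref{eqn:fundamental_dual} has good reduction and Lang's theorem applies over the residue field). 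Alternatively, you could simply enlarge $S$ to absorb the ramification of $\R'$; this changes only finitely many Euler factors and does not affect the conclusion.
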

\begin{proof}
	First note that $T(\Adele_F)_{\Sub_e(X,\br)} \neq \emptyset$; indeed $1 \in T(\Adele_F)_{\Sub_e(X,\br)}$
	as $b(1)=0$ for all $b \in \Sub_e(X,\br)$, by definition. Moreover $\Sub_e(X,\br)$ is finite by
	Lemma \ref{lem:Sub_is_finite}. The integral in the lemma is simply the Fourier transform
	$\widehat{H}(\thorn_{\Sub_e(X,\br)},1;-s)$, so applying Lemma \ref{lem:global_fourier_anisotropic}
	and (\ref{eqn:rho_Delta}) to $\Sub_e(X,\br)$ we deduce that
	$$\lim_{s \to 1} (s-1)^{\rho(X) - \Delta_X(\Sub_e(X,\br))}
	\int_{T(\Adele_F)_{\Sub_e(X,\br)}} H(t;-s)\mathrm{d}\mu \neq 0.$$
	The lemma is proved on noting that $\Delta_X(\Sub_e(X,\br)) = \Delta_X(\br)$, since by definition
	$\Sub_e(X,\br)$ generates the same group of residues as $\br$.
\end{proof}

As $\br \subset \Sub_e(X,\br)$, we obviously have $T(\Adele_F)_{\Sub_e(X,\br)}
\subset T(\Adele_F)_{\br}^{\Sub(X,\br)}$. Hence for $\sigma \in \RR_{>1}$ we obtain
\begin{align*}
	&\lim_{\sigma \to 1^+} (\sigma-1)^{\rho(X) - \Delta_X(\br)}
	\int_{T(\Adele_F)_{\br}^{\Sub(X,\br)}} H(t;-\sigma)\mathrm{d}\mu \\
	&\geq \lim_{\sigma \to 1^+} (\sigma-1)^{\rho(X) - \Delta_X(\br)}
	\int_{T(\Adele_F)_{\Sub_e(X,\br)}} H(t;-\sigma)\mathrm{d}\mu,
\end{align*}
and this latter limit is non-zero by Lemma \ref{lem:Sub(X,B)}.
This  shows (\ref{eqn:non-zero2}) and hence (\ref{eqn:non-zero}),
which completes the proof of Theorem \ref{thm:Brauer}. \qed

\subsection{Calculation of the leading constant} \label{sec:leading_constant}
We now calculate the leading constant in Theorem \ref{thm:Brauer}.
Peyre \cite{Pey95} has formulated a conjectural expression for the leading constant
in the classical case of Manin's conjecture where $\br=0$. This expression
was confirmed for anisotropic tori \cite[Cor.~3.4.7]{BT95}
and takes the shape
\begin{equation} \label{eqn:classical_constant}
	\frac{\alpha(X)\beta(X)\tau(X)}{(\rho(X)-1)!}.
\end{equation}
Here $\alpha(X)$ is a certain rational number defined in terms of the cone
of effective divisors of $X$. For anisotropic 
tori we have $\alpha(X)=1/|\Pic T|$ by \cite[Ex.~2.4.9]{BT95}.
Also $\beta(X)=|\Br X / \Br F|$ and $\tau(X)$ is the Tamagawa number of $X$,
defined as the volume of $\overline{X(F)}$ inside $X(\Adele_F)$
with respect to a certain Tamagawa measure (the factor $\beta(X)$ first appeared in \cite{BT95}).
In what follows the reader
should keep the expression (\ref{eqn:classical_constant}) in mind, as the expression
which we will derive shall bear a striking resemblance to it.

\subsubsection{Virtual Artin $L$-functions}\label{sec:virtual_Artin}
To deal with the leading constant, we shall use
the formalism of virtual Artin $L$-functions.
For a number field $F$, a virtual Artin representation over $F$ is a formal finite sum
$V = \sum_{i=1}^n z_iV_i$
where $z_i \in \CC$ and the $V_i$ are Artin representations.
We define $\rank V = \sum_{i=1}^n z_i \rank V_i$
and let $V^{G_F}=\sum_{i=1}^n z_i V_i^{G_F}$.
The $L$-function of $V$ is defined to be
$$L(V,s)=\prod_{i=1}^n L(V_i,s)^{z_i},$$
where $L(V_i,s)$ is the usual Artin $L$-function associated to $V_i$ (see \cite[Ch.~5.13]{IK04}).
Note that $L(V,s)$ is not an $L$-function in the traditional sense (as in \cite[\S5]{IK04}),
as in general it does not admit a meromorphic continuation to $\CC$. However,
standard properties of Artin $L$-functions imply that $L(V,s)$ admits a holomorphic
continuation with no zeros to the region $\re s \geq 1$, apart from possibly at $s=1$
(the reader who is unfamiliar with complex powers
of $L$-functions is advised to consult \cite[Ch.~II.5.1]{Ten95}).
We have
$$L(V,s) = \frac{c_V}{(s-1)^{r}} + O\left(\frac{1}{(s-1)^{r-1}}\right),$$
as $s \to 1$, where $r = \rank V^{G_F}$ and $c_V \neq 0$.
In this notation we shall write
$$L^*(V,1) = c_V.$$

\subsubsection{A Tamagawa measure}
We now define a Tamagawa measure, which may be viewed
as a generalisation of Peyre's Tamagawa measure \cite{Pey95} to our setting.
This measure is closely related to the measure constructed in \S \ref{sec:Haar_measure_Tamagawa},
though here we choose different convergence factors and also take into account the
adelic metric on $\omega_X$.

Let $\omega$ be an invariant differential form on $T$. For any place $v$ of $F$,
we define the associated local Tamagawa measure to be
$$\tau_v = \frac{|\omega|_v}{||\omega||_v}.$$
This definition is independent of the choice of $\omega$, though depends on the choice of adelic metric on $\omega_X$.
Recalling the construction of $\mu_v$ given in \S \ref{sec:Haar_measure_Tamagawa}, we see that
\begin{equation} \label{eqn:Tamagawa_Fourier_0}
	\tau_v= \frac{c_v\cdot\mu_v}{H_v(\cdot)}.
\end{equation}
In particular we have
\begin{equation} \label{eqn:Tamagawa_Fourier}
	\tau_v(T(F_v)) = c_v \cdot \widehat{H}_v(1,1;-1), \quad
    \tau_v(T(F_v)_\br) = c_v \cdot \widehat{H}_v(\thorn_v,1;-1).
\end{equation}
For the convergence factors, consider the following virtual Artin representation
\begin{equation}\label{def:Pic_br}
	\Pic_\br(\overline{X})_\CC=\Pic(\overline{X})_\CC
	- \sum_{\alpha \in \A}\left(1-\frac{1}{|\res_{\alpha}(\br)|}\right)\Ind_{F_\alpha}^F \CC.
\end{equation}
Here $\Pic(\overline{X})_\CC = \Pic(\overline{X}) \otimes_\ZZ \CC$ and
$\Ind_{F_\alpha}^F$ denotes the induced representation.
Note that when $\br =0$, then (\ref{def:Pic_br}) is simply $\Pic(\overline{X})_\CC$.
Next let $\Pic_\br(X)_\CC= \Pic_\br(\overline{X})^{G_F}_\CC$.
By (\ref{eqn:fundamental}), we may write the corresponding virtual Artin $L$-function as
\begin{equation} \label{eqn:L-function}
	L(\Pic_\br(\overline{X})_\CC,s)= 
	\frac{L(\Pic(\overline{X})_{\CC},s)}{\prod_{\alpha \in \A} \zeta_{F_\alpha}(s)^{(1-1/|	\res_{\alpha}(\br)|)}}
	= \frac{\prod_{\alpha \in \A} \zeta_{F_\alpha}(s)^{1/|\res_{\alpha}(\br)|}}{L(X^*(\Tbar)_\CC,s)}.
\end{equation}
For each place $v \in \Val(F)$ we define
$$ \lambda_v = \left \{
	\begin{array}{ll}
		L_v(\Pic_\br(\overline{X})_\CC,1),& \quad v \text{ non-archimedean}, \\
		1,& \quad v \text{ archimedean}. \\
	\end{array}\right.$$	
In the light of (\ref{eqn:Tamagawa_Fourier}), Lemma \ref{lem:global_fourier_anisotropic}
implies that these are a family of convergence factors, i.e.~the measure
$\prod_{v} \lambda_v^{-1} \tau_{v}$
converges to a measure on $T(\Adele_F)_\br$.
We define the Tamagawa measure on $T(\Adele_F)_\br$ associated to $\br$ to be
\begin{equation} \label{def:Tamagawa}
	\tau_\br = L^*(\Pic_\br(\overline{X})_\CC,1)  \prod_{v \in \Val(F)} \lambda_v^{-1} \tau_{v}.
\end{equation}
Note that we have not included a discriminant factor as in Peyre \cite[Def.~2.1]{Pey95}, since we have normalised
our Haar measure on $\Adele_F$ so that $\vol(\Adele_F/F) = 1$.

\subsubsection{The leading constant}

We now calculate the leading constant in Theorem \ref{thm:Brauer}.

\begin{theorem}\label{thm:Brauer_2}
    Under the same assumptions of Theorem \ref{thm:Brauer} we have
    $$N(U,H,\br,B) \sim c_{X,\br,H} B (\log B)^{\rho_\br(X)-1}, \quad \text{as } B \to \infty,$$
    where $\rho_{\br}(X) = \rank \Pic_\br(X)_\CC$
    and
    $$c_{X,\br,H}=\frac{\alpha(X)\cdot\mathopen|\Sub(X,\br)/\Br F\mathclose|\cdot \tau_\br\left(T(\Adele_F)_\br^{\Sub(X,\br)}\right)}{\Gamma(\rho_\br(X))}.$$
\end{theorem}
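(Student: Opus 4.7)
The plan is to refine the proof of Theorem \ref{thm:Brauer} in order to extract an explicit formula for $\Omega(1)=\lim_{s\to 1}(s-1)^{\rho_\br(X)}Z(s)$, and then identify it with the claimed product. By \eqref{eqn:asym} and \eqref{eqn:rho_Delta}, the asymptotic exponent is $\rho_\br(X)-1$ and the leading coefficient equals $\Omega(1)/\Gamma(\rho_\br(X))$, so it suffices to show
$$\Omega(1)=\alpha(X)\cdot|\Sub(X,\br)/\Br F|\cdot \tau_\br\!\left(T(\Adele_F)_\br^{\Sub(X,\br)}\right).$$

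\emph{Step 1: isolating the singular characters.} By Lemma \ref{lem:global_fourier_anisotropic} and Theorem \ref{thm:sub_C}, the characters contributing to the pole of $Z(s)$ of maximal order $\rho_\br(X)$ are precisely those in $C(T,\R)$, in bijection with $\Sub_e(X,\br)/\Be(T)$. Character orthogonality (as in \S\ref{sec:non-vanishing}) gives
$$\sum_{\chi\in C(T,\R)}\widehat H(\thorn,\chi;-s)=\frac{|\Sub_e(X,\br)|}{|\Be(T)|}\int_{T(\Adele_F)_\br^{\Sub(X,\br)}}H(t;-s)\,\mathrm{d}\mu,$$
so the Poisson formula \eqref{eqn:HZF_anisotropic} yields $\Omega(1)=|\Sub_e(X,\br)|\,I/\bigl(|\Be(T)|\,\vol(T(\Adele_F)/T(F))\bigr)$, where $I$ is the above integral multiplied by $(s-1)^{\rho_\br(X)}$ and taken in the limit $s\to 1$.

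\emph{Step 2: simplifying the prefactor.} Ono's formula \eqref{eqn:Ono} evaluates $\vol(T(\Adele_F)/T(F))$ as $L^*(X^*(\Tbar),1)\,|\Pic T|/|\Sha(T)|$; Sansuc's duality \eqref{eqn:Be_Sha} gives $|\Be(T)|=|\Sha(T)|$; and $\alpha(X)=1/|\Pic T|$ for anisotropic tori. Since $X$ is a smooth proper toric variety we have $\Br\Xbar=0$, so Lemma \ref{lem:sub_algebraic} forces $\Sub(X,\br)\subset\Br_1 U$; the canonical splitting $\Br_1 U\cong \Br_0 U\oplus \Br_e T$ via $b\mapsto b-b(1)$ then identifies $\Sub(X,\br)/\Br F$ with $\Sub_e(X,\br)$. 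Combining all this,
$$\Omega(1)=\alpha(X)\cdot|\Sub(X,\br)/\Br F|\cdot\frac{I}{L^*(X^*(\Tbar),1)}.$$

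\emph{Step 3: identifying $I$ with a Tamagawa volume.} The heart of the argument is to prove
$$I=L^*(X^*(\Tbar),1)\cdot \tau_\br\!\left(T(\Adele_F)_\br^{\Sub(X,\br)}\right).$$
The integration domain is cut out by global Brauer--Manin conditions, so I would return to the Fourier expression
$$I=\frac{|\Be(T)|}{|\Sub_e(X,\br)|}\lim_{s\to 1}(s-1)^{\rho_\br(X)}\sum_{\chi\in C(T,\R)}\prod_{v\in\Val(F)}\widehat H_v(\thorn_v,\chi_v;-s),$$
in which factorisation over places is now manifest character by character. Lemma \ref{lem:good_Fourier} identifies each non-archimedean factor with a partial Euler product of type $L_{\R_\alpha}(\chi_\alpha,s)$ up to a bounded correction, and Lemma \ref{lem:partial_Euler_product} combined with the virtual Artin $L$-function identity \eqref{eqn:L-function} reassembles the polar residues at $s=1$ into $L^*(\Pic_\br(\Xbar)_\CC,1)\cdot L^*(X^*(\Tbar),1)$. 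Simultaneously, the local identities \eqref{eqn:Tamagawa_Fourier_0} and \eqref{eqn:Tamagawa_Fourier} convert $\mu_v$ into $\tau_v$ with precisely the convergence factors $\lambda_v^{-1}$ appearing in \eqref{def:Tamagawa}; character orthogonality applied in reverse folds the sum over $\chi$ back into integration against $\mathrm{d}\tau_\br$ restricted to $T(\Adele_F)_\br^{\Sub(X,\br)}$, producing the claim.

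The principal obstacle is Step 3: because the Brauer--Manin condition couples distinct places, the integration domain is not a product of local sets, and one must show that the product of local Tamagawa factors with convergence factors $\lambda_v^{-1}$ nonetheless assembles into $\tau_\br$ on this non-product set. The route through Fourier transforms circumvents this by trading the global condition for a finite sum of characters whose local contributions factorise, and the matching of polar residues between the partial Euler products $L_{\R_\alpha}$ and the virtual Artin $L$-function $L(\Pic_\br(\Xbar)_\CC,s)$ is precisely what causes $L^*(X^*(\Tbar),1)$ to appear and cancel the corresponding factor coming from Ono's formula.
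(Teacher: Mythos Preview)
Your Steps 1 and 2 are correct and match the paper's argument exactly, including the identification $\Sub(X,\br)/\Br F\cong\Sub_e(X,\br)$ via Lemma~\ref{lem:sub_algebraic} and the splitting $\Br_1 U\cong\Br_0 U\oplus\Br_e T$.

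Your Step 3, however, takes an unnecessary detour. You flag the non-product structure of $T(\Adele_F)_\br^{\Sub(X,\br)}$ as the ``principal obstacle'' and propose to circumvent it by returning to the character sum and then unfolding again. But there is no obstacle: the relation \eqref{eqn:Tamagawa_Fourier_0}, namely $\mathrm{d}\mu_v/H_v(t_v)=c_v^{-1}\,\mathrm{d}\tau_v$, is a \emph{pointwise} identity of measures on each $T(F_v)$, so it applies on any measurable subset of $T(\Adele_F)$ regardless of whether that subset is a product. The paper simply multiplies and divides by $L(\Pic_\br(\Xbar)_\CC,s)$, uses \eqref{eqn:L-function} to rewrite this as $\prod_\alpha\zeta_{F_\alpha}(s)^{1/|\res_\alpha(\br)|}/L(X^*(\Tbar)_\CC,s)$, substitutes $\mathrm{d}\mu_v=L_v(X^*(\Tbar),1)H_v(t_v)\,\mathrm{d}\tau_v$ directly inside the integral over $T(\Adele_F)_\br^{\Sub(X,\br)}$, and takes the limit $s\to 1$. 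This yields $I=L(X^*(\Tbar),1)\cdot\tau_\br(T(\Adele_F)_\br^{\Sub(X,\br)})$ in three lines, with $L(X^*(\Tbar),1)$ finite and nonzero since $T$ is anisotropic. Your route via characters would eventually arrive at the same place, but the direct measure substitution is both shorter and conceptually cleaner.
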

\begin{proof}
	The asymptotic formula follows from inserting the definition (\ref{def:Pic_br}) of $\Pic_\br(X)_\CC$
    into Theorem \ref{thm:Brauer}. For $c_{X,\br,H}$, first note that as $T$ is anisotropic, we see that
	$L(X^*(\Tbar),s)$ is holomorphic and non-zero on the half-plane $\re s \geq 1$.
    By (\ref{eqn:HZF_anisotropic}), (\ref{eqn:asym})
	and the work in \S \ref{sec:non-vanishing}, 
	the leading constant $c_{X,\br,H}$ is non-zero and takes the form
    $$\frac{|\Sub_e(X,\br)|}{\Gamma(\rho_\br(X))\vol(T(\Adele_F)/T(F))|\Be(T)|}\lim_{s \to 1} (s-1)^{ \rho(X) - \Delta_X(\br)}
	\int_{T(\Adele_F)_\br^{\Sub(X,\br)}}H(t;-s)\mathrm{d}\mu.$$
	Therefore, on using (\ref{eqn:Tamagawa_Fourier_0}), \eqref{eqn:L-function} and \eqref{def:Tamagawa}
	we see that
    \begin{align*}
    &\lim_{s \to 1} (s-1)^{ \rho(X) - \Delta_X(\br)}
	\int_{T(\Adele_F)_\br^{\Sub(X,\br)}}H(t;-s)\mathrm{d}\mu\\
    & = \lim_{s \to 1} (s-1)^{\rho_\br(X)}\frac{L(\Pic_\br(\overline{X})_\CC,s)}{L(\Pic_\br(\overline{X})_\CC,s)}
	\int_{T(\Adele_F)_\br^{\Sub(X,\br)}}\frac{\mathrm{d}\mu}{H(t)^s} \\
    & = L(X^*(\Tbar),1)\lim_{s \to 1} (s-1)^{\rho_\br(X)}\frac{L(\Pic_\br(\overline{X})_\CC,s)}
		{\prod\limits_{\alpha \in \A} \zeta_{F_\alpha}(s)^{1/|\res_{\alpha}(\br)|}}
	\int_{T(\Adele_F)_\br^{\Sub(X,\br)}}\prod_{v}\frac{L_v(X^*(\Tbar),1)\mathrm{d}\tau_v}{H_v(t_v)^{s-1}} \\
    & = L(X^*(\Tbar),1)\tau_\br\left(T(\Adele_F)_\br^{\Sub(X,\br)}\right).
    \end{align*}	
	Sansuc's duality (\ref{eqn:Be_Sha}), Ono's formula (\ref{eqn:Ono}) and
    the equality $\alpha(X)=1/|\Pic T|$ (see \cite[Ex. 2.4.9]{BT95}) now imply the result.
\end{proof}

\subsubsection{The case where $\Sub(X,\br)=\Sub(F(X),\br)$}
If $\Sub(X,\br) \neq \Sub(F(X),\br)$, then Theorem \ref{thm:Sub_Harari} implies that
$T(\Adele_F)_\br^{\Sub(X,\br)}$ can be quite complicated. 
In particular, the volume appearing in Theorem \ref{thm:Brauer_2} can be difficult to calculate in general.
When this equality holds however, the leading constant takes a more pleasing form.
\begin{lemma}
	Suppose that $\Sub(X,\br)=\Sub(F(X),\br)$. Let 
	$V=V_1 \times_T \cdots \times_T V_r$ be a product of Brauer-Severi schemes over $T$ 
	such that $\langle [V_1],\ldots,[V_r] \rangle = \br$. Then
	$$
		c_{X,\br,H}=\frac{\alpha(X)\cdot|\br|\cdot|\Brnr(F(V)/F)/\Br F|\cdot 
		\tau_\br(\overline{T(F)}^w_\br)}{\Gamma(\rho_\br(X))},
	$$
	where $\overline{T(F)}^w_\br$ denotes the closure of $T(F)_\br$ in
	$T(\Adele_F)_\br$ for the product topology.
\end{lemma}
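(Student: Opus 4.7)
The plan is to massage the general expression
$$c_{X,\br,H}=\frac{\alpha(X)\cdot|\Sub(X,\br)/\Br F|\cdot \tau_\br\bigl(T(\Adele_F)_\br^{\Sub(X,\br)}\bigr)}{\Gamma(\rho_\br(X))}$$
of Theorem \ref{thm:Brauer_2} under the extra hypothesis $\Sub(X,\br)=\Sub(F(X),\br)$. Since $T\subset X$ is open and dense, $F(X)=F(T)$, so this hypothesis places us exactly in the regime where the exact sequences from \S\ref{Sec:subordinate} and \S\ref{sec:sub_tori} can be used to rewrite the two non-trivial factors in the numerator.

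First I would simplify the arithmetic factor $|\Sub(X,\br)/\Br F|$ using Theorem \ref{thm:CTSD}, applied to the product of Brauer--Severi schemes $V\to T$: it gives the short exact sequence
$$0\to \br \to \Sub(F(T),\br)\to \Brnr(F(V)/F)\to 0,$$
where $\langle\br\rangle=\br$ by hypothesis on the $[V_i]$. Since $\br\subset \Br_e T$, no nonzero element of $\br$ is constant, so $\br\cap \Br F=0$, and the constants $\Br F\subset \Sub(F(T),\br)$ map isomorphically onto $\Br F\subset \Brnr(F(V)/F)$ under restriction to $V$. Taking the quotient by $\Br F$ then preserves exactness and yields $|\Sub(X,\br)/\Br F|=|\br|\cdot|\Brnr(F(V)/F)/\Br F|$.

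Next I would simplify the adelic factor. Since any constant $b\in \Br F$ satisfies $\sum_v\inv_v b(t_v)=0$ automatically by reciprocity \eqref{eqn:CFT}, the Brauer--Manin condition imposed by $\Sub(X,\br)$ coincides with the one imposed by $\Sub_e(X,\br)=\Sub_e(F(T),\br)$, so $T(\Adele_F)_\br^{\Sub(X,\br)}=T(\Adele_F)_\br^{\Sub_e(F(T),\br)}$. Corollary \ref{cor:Sub_adelic} identifies this with $\overline{T(F)}^w_\br$, and the final identity $\overline{T(F)}^w_\br=\overline{T(F)_\br}^w$ holds because $T(\Adele_F)_\br$ is both open and closed in $T(\Adele_F)$: each $b\in\br$ induces a locally constant evaluation map $T(F_v)\to \Br F_v$, and the product structure plus finiteness of $\br$ then makes $T(\Adele_F)_\br$ clopen. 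Substituting the two simplifications into the formula of Theorem \ref{thm:Brauer_2} yields the claimed expression for $c_{X,\br,H}$; the only step requiring any genuine care is the openness/closure identification above, which I expect to be the main (though minor) technical point, everything else being a direct consequence of the exact sequences already established in the paper.
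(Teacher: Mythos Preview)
Your approach is the same as the paper's: apply Theorem~\ref{thm:CTSD} to factor $|\Sub(X,\br)/\Br F|$ and apply Corollary~\ref{cor:Sub_adelic} to identify the adelic domain of integration. Your handling of the cardinality factor and of the passage from $\Sub(X,\br)$ to $\Sub_e(F(T),\br)$ in the Brauer--Manin condition is correct and more detailed than the paper's terse citation.

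There is, however, a genuine error in your last step. You claim that $T(\Adele_F)_\br$ is open in the product topology because each local evaluation map is locally constant. It is true that each $T(F_v)_\br$ is clopen in $T(F_v)$, and hence that $T(\Adele_F)_\br=\prod_v T(F_v)_\br$ is \emph{closed}. But an infinite product of open subsets is open in the product topology only if all but finitely many factors are the whole space, i.e.\ only if $T(F_v)_\br=T(F_v)$ for almost all $v$. By Harari's theorem (Theorem~\ref{thm:Harari}) this is equivalent to $\br\subset\Brnr T=\Br X$, which is precisely the trivial case $\Delta_X(\br)=0$. In the interesting situation where some $b\in\br$ has a nontrivial residue along a boundary divisor, $T(\Adele_F)_\br$ is \emph{not} open, and your clopen argument for $\overline{T(F)}^w_\br=\overline{T(F)_\br}^w$ breaks down.

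Fortunately this bridging step is unnecessary. The proof of Corollary~\ref{cor:Sub_adelic} goes through Theorem~\ref{thm:sub_WA}, and the latter is phrased directly in terms of $A(T,\br)=T(\Adele_F)_\br/\overline{T(F)_\br}^w$. Surjectivity of $\Sub_e(F(T),\br)\to A(T,\br)^\wedge$ then yields
\[
T(\Adele_F)_\br^{\Sub_e(F(T),\br)}=\overline{T(F)_\br}^w
\]
immediately, with $\overline{T(F)_\br}^w$ (the closure of $T(F)_\br$) appearing on the right-hand side from the start. So you can drop the clopen argument entirely and cite Corollary~\ref{cor:Sub_adelic} (or rather its proof via Theorem~\ref{thm:sub_WA}) for the identification you need; the discrepancy you noticed between $\overline{T(F)}^w_\br$ and $\overline{T(F)_\br}^w$ in the statement of Corollary~\ref{cor:Sub_adelic} is a notational slip in the paper.
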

\begin{proof}
	Theorem \ref{thm:sub_WA} implies
	that $T(\Adele_F)_\br^{\Sub(F(X),\br)}= \overline{T(F)}^w_\br$,
	whereas the equality $$|\Sub(F(X),\br)/ \Br F|=|\br|\cdot|\Brnr(F(V)/F) / \Br F|,$$
	follows from Theorem \ref{thm:CTSD}. Combining these, we obtain the result.
\end{proof}

Theorem \ref{thm:sub_WA} implies that there exists a finite subset $S \subset \Val(F)$ such that
$$\tau_\br(\overline{T(F)}^w_\br) = L^*(\Pic_\br(X)_\CC,1)
\prod_{v \in S} \tau_v\left(\overline{T(F)_\br}^w \bigcap \prod_{v \in S} T(F_v)_\br \right)
\prod_{v \not \in S}\tau_v(T(F_v)_\br).$$
Hence $\tau_\br(\overline{T(F)}^w_\br)$ is a product of
``local densities'' over almost all places, together with a factor which measures the failure of weak approximation for $V$
(in the classical case the corresponding factor measures the failure of weak approximation for $T$ itself).

\subsubsection{The case where $\br \subset \Br X$}
We finish with the special case where $\br \subset \Br X$
(in the notation of Theorem \ref{thm:Brauer}). Note that $\Br_1 X = \Br X$ as $X$ is smooth projective and geometrically rational.
Here $\Delta_X(\br)=0$, in particular $\Sub(X,\br)=\Sub(F(X),\br) = \Br X$
and the measure $\tau_\br$ is the Tamagawa measure $\tau$ defined by Peyre \cite{Pey95}.
The asymptotic formula in Theorem \ref{thm:Brauer} now has the same order of magnitude
as in Manin's conjecture, so one may wonder how the constant $c_{X,\br,H}$
compares with $c_{X,H,\mathrm{Peyre}}$. In such cases, a simple application of 
Theorem \ref{thm:Brauer_2} yields
\begin{equation} \label{eqn:tau/tau}
\lim_{B \to \infty} \frac{N(U,H,\br,B)}{N(U,H,B)} = \frac{\tau(X(\Adele_F)_\br^{\Br X})}{\tau(X(\Adele_F)^{\Br X})}.
\end{equation}
Note however that \eqref{eqn:tau/tau} does not require the full force of Theorem \ref{thm:Brauer_2};
it follows from Theorem \ref{thm:trivial} and holds for toric varieties with respect to not necessarily anisotropic tori, as explained in Section \ref{sec:conclusion}.

We give two types of phenomenon that occur for toric varieties which cannot occur
in the case of projective space considered by Serre \cite{Ser90}. 
Namely, let $U \subset \PP^n$ be a non-empty open and let $\br \subset \Br U$ be a finite
subgroup. We claim that
\begin{equation}
\begin{split} \label{eqn:proj}
\lim_{B \to \infty} \frac{N(U,H,\br,B)}{N(U,H,B)} = 
\begin{cases}
	1, & \br = 0, \\
	0, & \br \neq 0.
\end{cases}
\end{split}
\end{equation}
If $\br = 0$, this is clear. Otherwise, there are two cases. Either $\br$ contains a non-trivial constant Brauer class, in which case $U(F)_\br = \emptyset$, or $\br$ contains a ramified Brauer class. In this latter case when $F=\QQ$ and $\#\br = 2$ the vanishing \eqref{eqn:proj} follows from \cite[Thm.~2]{Ser90}; the result for general $F$ and $\br$ follows from \cite[Thm.~2.4]{LS15}.


For toric varieties however, we obtain a much wider range of behaviour than \eqref{eqn:proj},
as \eqref{eqn:tau/tau} and following lemma demonstrate.

\begin{lemma} \label{lem:Peyre}
	\begin{enumerate}
		\item[]
		\item There exist $T$ and $\br \subset \Br X$ over $F$  with $\br \neq 0$, such that
		$$\frac{\tau(X(\Adele_F)_\br^{\Br X})}{\tau(X(\Adele_F)^{\Br X})} = 1.$$
		\item There exist $T$ and $\br \subset \Br X$ over $F$, such that
		$$0 < \frac{\tau(X(\Adele_F)_\br^{\Br X})}{\tau(X(\Adele_F)^{\Br X})} < 1.$$
	\end{enumerate}
\end{lemma}
\begin{proof}
	For the proof, we  use the well-known fact that any torus $T$ over any field 
	admits a smooth projective equivariant compactification $X$ (see \cite[Cor.~1]{CTHS05}).

	For $(1)$, choose $T$ such that $\Be(T) \neq 0$ and take $\br = \Be(T)$.
	We have $\br \subset \Br X$ by \eqref{seq:Br_Sha}. Note that by  (\ref{eqn:Be_Sha}),
	the condition $\Be(T) \neq 0$ is equivalent to $\Sha(T) \neq 0$. 
	This occurs, for example, for the
	norm one torus over $\QQ$ for the field extension $\QQ(\sqrt{13},\sqrt{17})/\QQ$
	\cite[p.~224]{CTS77}. Similar examples over	every number field 
	can be constructed using Chebotarev's density theorem.
	By (\ref{eqn:CFT}) and \eqref{def:Be} we have $X(F)_\br = X(F)$,
	hence $\tau(X(\Adele_F)_\br^{\Br X})=\tau(X(\Adele_F)^{\Br X})$.
	Geometrically, we obtain a product
	of Brauer-Severi schemes over $X$ which is Zariski locally trivial over each $F_v$, yet not over $F$.

	For $(2)$, we choose $T$ and $\br$ such that $X(F)_\br \neq X(F)$ but $X(F)_\br \neq \emptyset$.
	Over $\QQ$, one may take the norm one torus for the
	field extension $\QQ(\sqrt{2},\sqrt{3})/\QQ$, as here
	not every rational point is Brauer equivalent to $1 \in T(\QQ)$
	(see \cite[p.~224]{CTS77} for this example and \cite[\S7]{CTS77}
	for definitions -- as before, similar examples exist over any number field).
	Note that such a $T$ must necessarily fail weak approximation.
	As $\br \subset \Br X$, there exists a finite set of places $S$ 
	such that $X(F_v)_\br = X(F_v)$ for all $v \not \in S$.
	Moreover, as $\br$ is finite, the complement of $\smash{X(\Adele_F)_\br^{\Br X}}$
	in $\smash{X(\Adele_F)^{\Br X}}$ is open and closed, hence has positive measure 
	with respect to $\tau$. This gives the required example.
\end{proof}


\end{document}